\documentclass[reqno]{amsart}
\newcommand \datum {January 17, 2024}


\usepackage{amssymb,latexsym}
\usepackage{amsmath}
\usepackage{hyperref}
\usepackage{url}
\usepackage{graphicx}
\usepackage[dvipsnames]{xcolor}
\usepackage{enumerate}
\numberwithin{equation}{section}
\theoremstyle{plain}
 \newtheorem{theorem}{Theorem}[section]
 \newtheorem{lemma}[theorem]{Lemma}
 \newtheorem{observation}[theorem]{Observation} 

\theoremstyle{definition}
 \newtheorem{fact}[theorem]{Fact} 
 \newtheorem{definition}[theorem]{Definition}
 \newtheorem{example}[theorem]{Example}
 \newtheorem{remark}[theorem]{Remark}


%

\newcommand \idl[1] {\textup{idl}(#1)}
\newcommand \prline [2] {\ell_{#1,#2}}
\newcommand \vvec[1] {\vec #1\kern 0.75pt} 
\newcommand \pvec [1]{\vvec #1'}
\newcommand \hij [2] {h_{#1,#2}}
\newcommand \mng [1] {f^\textup{mng}(#1)}
\newcommand \nothing[1]{}
\newcommand \fer {f^{(\textup e)}}
\newcommand \finu [2] {f_{#1,#2}}
\newcommand \fif {f_{i,4}}
\newcommand \ginu [2] {g_{#1,#2}}
\newcommand \gif {g_{i,4}}
\newcommand \ter[1] {#1^\ast} 
\newcommand \ater [2] {#2^{\ast #1}}
\newcommand \bter [2] {#2^{\ast\ast #1}} 
\newcommand \nater [1] {#1^{\ast}}
\newcommand \uid[2] {\vvec #1^{(#2)}}
\newcommand \iud[2] {\vvec #2^{(#1)}}
\newcommand \nuid[2] {#1^{(#2)}}
\newcommand \inud[2] {#2^{{#1}}}
\newcommand \biud[3] {\vvec #3^{(#1,#2)}}
\newcommand \vvxi {\vec \xi\,)}
\newcommand \vspan [1] {[#1]_{\textup{vs}}}
\newcommand \vgener [1] {\textup{Span}(#1)}
\newcommand \fvgener [2] {\textup{Span}_{#1}(#2)}
\newcommand \latgen [1] {[#1]_{\textup{lat}}}
\newcommand \kdelta[3] {\delta^{(#1)}_{#2#3}} 
\newcommand \vxi {{\vec \xi}}
\newcommand \pxi {\vec \xi^+}
\newcommand \pjg {u_j,\vvec g}
\newcommand \Subp[2] {\textup{Sub}(_{#1} #2) } 
\newcommand \Sub[1] {\textup{Sub}(#1) } 
\newcommand \typ[1]  {\textup{typ}(#1)}
\newcommand \hgh  {\textup{hgh}}
\newcommand \ftyp[1]  {\textup{ftyp}(#1)}
\newcommand \coring[2] {R\langle #1,#2\rangle}
\newcommand \ppr {\kern1pt'}
\newcommand\bpf[4] {F\bigl( \textstyle{{{#1}\atop {#3}} {{#2}\atop {#4}}\bigr) }}
\newcommand\bplus[3]{\mathrel{ \oplus_{#1#2#3} } }
\newcommand\bszor[3]{\mathrel{ \otimes_{#1#2#3} } }
\newcommand\bminus[3]{\mathrel{ \ominus_{#1#2#3} } }
\newcommand \recip[3] {\textup{rec}_{#1#2#3}}
\newcommand \ipsp[1] {P^{#1}_{2}}
\newcommand \psp[1] {P_{#1}}
\newcommand \psd {P_{d-1}}
\newcommand \vecspace[2]{{_#1  #1^#2 }} 
\newcommand \bmu {\mu}
\newcommand \ul[1] {\underline{#1}}
\newcommand\lint[1]{\lfloor #1\rfloor}
\newcommand\uint[1]{\lceil #1\rceil}
\renewcommand \phi{\varphi}
\newcommand \Nnul {{\mathbb N}_0}
\newcommand \Nplu {{\mathbb N^+}}
\newcommand{\tbf}{\textbf}
\newcommand{\set}[1]{\{#1\}}
\newcommand \red[1]{{\textcolor{red}{#1}\color{black}}}


\begin{document}

\title[Generating  subspace lattices]
{Generating subspace lattices, their direct products, and their direct powers}

\author[G.\ Cz\'edli]{G\'abor Cz\'edli}
\email{czedli@math.u-szeged.hu}
\urladdr{http://www.math.u-szeged.hu/~czedli/}
\address{University of Szeged, Bolyai Institute. 
Szeged, Aradi v\'ertan\'uk tere 1, HUNGARY 6720
}

\begin{abstract} 
In 2008, L\'aszl\'o Z\'adori proved that the lattice $\Sub V$ of all subspaces of a vector space $V$ of finite dimension at least $
3$  over a finite field $F$ has a 5-element generating set; in other words, $\Sub V$ is 5-generated. 
We prove that the same holds over every $1$- or $2$-generated field; in particular, over every field that is a finite degree extension of its prime field. 
Furthermore, let 
$F$, $t$, $V$, $d\geq 3$,  $\lint{d/2}$, and $m$ denote an arbitrary field, the minimum cardinality  of a generating set of $F$, a finite dimensional vector space over $F$, the dimension (assumed to be at least $3$) of $V$,    the integer part of $d/2$, and 
the least cardinal such that $m\lint{d^2/4}$ is at least $t$, respectively. We prove that $\Sub V$ is $(4+m)$-generated but none of its generating sets is of size less than $m$. Moreover, the $k$-th direct power of $\Sub V$ is $(5+m)$-generated for many positive integers $k$; for all positive integers $k$ if $F$ is infinite. Finally, let $n$ be a positive integer. For  $i=1,\dots, n$, let  $p_i$ be a prime number or 0, and let $V_i$ be the 3-dimensional vector space over the prime field of characteristic $p_i$. We prove that the direct product of the lattices $\Sub{V_1}$, \dots, $\Sub{V_n}$  is 4-generated if and only if each of the numbers $p_1$, \dots, $p_n$ occurs at most four times in the sequence  $p_1$, \dots, $p_n$. Neither this direct product nor any of the subspace lattices $\Sub V$ above is 3-generated. 
\end{abstract}

\dedicatory{Dedicated to Honorary Professor J\'ozsef N\'emeth on his eightieth birthday}

\thanks{This research was supported by the National Research, Development and Innovation Fund of Hungary, under funding scheme K-138892.  \hfill{\red{\tbf{\datum}}}}

\subjclass {06B99, 06C05}


\keywords{Small generating set, four element generating set,  subspace lattice, projective space, coordinatization of lattices, field extension}

\maketitle

\section{Note on the dedication}

At the beginning of my university studies, Dr.\ J\'ozsef N\'emeth  taught me in the first semester. He was excellent. All the students in the classroom regretted that he was assigned different sections and  courses for the next semester.   
As I reminisce about his unsurpassable tutorials, I wish him a happy birthday.

\section{Introduction}

For a lattice or a field $A$, we define the following cardinal number: 
\begin{equation}
\mng A:=\min\set{|X|: X\text{ is a generating set of }A}.
\label{eq:rgDtWvStsgHtkw}
\end{equation}
For later reference, note that for a field $F$,
\begin{equation}
F\text{ is a prime field if and only if }\mng F=0.
\label{eq:crMdtjlcnmpGSn}
\end{equation}
By a \emph{field} we mean a \emph{commutative field}.
Let $L$ be the subspace lattice of a vector space $V$ 
of finite dimension   
$d\geq 3$ over a field $F$; in notation, 
\begin{equation}
L:=\Subp F V\text{, where }V=\vecspace F d\text{ is }3\leq d\text{-dimensional}.
\label{eq:RsPlWrQn}
\end{equation}
We often write $\Sub V$ instead of $\Subp F V$. 
 Z\'adori \cite{zadori2} proved that whenever $F$ is a finite field, then $L$ in \eqref{eq:RsPlWrQn} is 5-generated. Earlier, Gelfand and Ponomarev \cite{gelfand} proved that $L$ is 4-generated but not 3-generated if $F$ is a prime field; see Z\'adori \cite{zadori2} for historical details.  

Our aim is to generalize these two results and prove some related results. 
In Z\'adori's result, $F$ is a finite field with $\mng F=1$; we remove finiteness from his assumptions on $F$ and, instead of $\mng F=1$, we assume only that $\mng F\in\set{1,2}$. 
Related to Gelfand and Ponomarev' result,  we  prove that if $d=3$ and $F$ is a prime field, then 
$\mng{L^k}=4$ holds for $L$ from \eqref{eq:RsPlWrQn} even for $k\in\set{2,3,4}$ (in addition to $k=1$); the number 4 is optimal here at both of its occurrences. Furthermore, we extend this result to direct products; so the just-mentioned result (for $k$-th direct powers, $k\in\set{1,2,3,4}$) becomes a particular case.
If no peculiarity of the (finite or infinite) cardinal number $\mng F$ is assumed and $L$ is still from \eqref{eq:RsPlWrQn}, then denote by $m$ the smallest cardinal number such that $m\lint{d^2/4} \geq \mng F$. We prove that $m\leq \mng L\leq 4+m$ and $\mng{L^k}\leq 5+m$ for many integers $k\in\Nplu:=\set{1,2,3,\dots}$; for all $k\in\Nplu$ if $F$ is infinite.

By a \emph{nontrivial lattice} we mean an at least 2-element lattice.
In Section \ref{sect:aproof}, to shed more light on $\mng{L^k}$, we prove the following observation, in which $L$ need not be a subspace lattice.

\begin{observation}\label{obs:hbNvlnlgnD}
Let $L$ be a  nontrivial lattice and let $n\in\Nplu:=\{1$, $2$, $3,\dots\}$.  If $k\in\Nplu$ is large enough to exclude the existence of a $k$-element antichain in $L^n$, then $L^k$ is not $n$-generated. In particular, $L^k$ is not $n$-generated if $k>|L|^n$.
\end{observation}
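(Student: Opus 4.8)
The plan is to establish the contrapositive of the first assertion and then read off the ``In particular'' clause for free. For the latter, note that if $k>|L|^n$ then $|L^n|=|L|^n<k$, so $L^n$ has strictly fewer than $k$ elements and, a fortiori, contains no $k$-element antichain; thus the hypothesis of the first assertion is met and the conclusion ``$L^k$ is not $n$-generated'' applies. (This case concerns only finite $L$, which is automatic once $k>|L|^n$ is demanded with $k\in\Nplu$.) Hence it remains to prove: \emph{if $L^k$ is $n$-generated, then $L^n$ contains a $k$-element antichain.}

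So assume $L^k$ has a generating set of size at most $n$; padding with repeated elements if necessary, write it as $\{g_1,\dots,g_n\}$. Each generator is a $k$-tuple $g_j=(g_{j,1},\dots,g_{j,k})$ with $g_{j,i}\in L$. First I would ``transpose'' this array: for each coordinate $i\in\{1,\dots,k\}$ put $a_i:=(g_{1,i},\dots,g_{n,i})\in L^n$. I claim that $a_1,\dots,a_k$ is a $k$-element antichain; it suffices to show these elements are pairwise incomparable, since incomparable elements are in particular distinct.

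The incomparability comes from the coordinate projections. For $i\ne i'$, the map $\pi:=(\pi_i,\pi_{i'})\colon L^k\to L^2$, $z\mapsto(z_i,z_{i'})$, is a lattice homomorphism. Suppose toward a contradiction that $a_i\le a_{i'}$, i.e.\ $g_{j,i}\le g_{j,i'}$ for all $j$. Then every generator satisfies $\pi(g_j)=(g_{j,i},g_{j,i'})\in D$, where $D:=\{(x,y)\in L^2:x\le y\}$. Now $D$ is a sublattice of $L^2$ (this uses only the monotonicity of $\wedge$ and $\vee$: from $x_1\le y_1$ and $x_2\le y_2$ one gets $x_1\wedge x_2\le y_1\wedge y_2$ and $x_1\vee x_2\le y_1\vee y_2$), so $\pi^{-1}(D)$ is a sublattice of $L^k$. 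This sublattice contains all $g_j$ and hence equals $L^k$, giving $z_i\le z_{i'}$ for \emph{every} $z\in L^k$. But $L$ is nontrivial, so it has distinct elements $u\ne v$, and then $c:=u\wedge v<u\vee v=:d$ is a strictly comparable pair. Choosing $z\in L^k$ with $z_i=d$ and $z_{i'}=c$ (possible because $i\ne i'$) yields $z_i\not\le z_{i'}$, a contradiction. Hence $a_i\not\le a_{i'}$, and by symmetry $a_{i'}\not\le a_i$.

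The argument is short, and the only point requiring any care is the last one: extracting a \emph{strictly} comparable pair $c<d$ from the mere nontriviality of $L$. I expect this to be the main (and essentially only) obstacle, and it is handled exactly by the observation that $u\wedge v=u\vee v$ would force $u=v$; everything else — that $D$ is a sublattice, and that a homomorphism carrying all generators into a sublattice carries the whole domain there — is routine.
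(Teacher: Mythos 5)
Your proposal is correct and follows essentially the same route as the paper: both transpose the $n$ generators of $L^k$ into $k$ elements of $L^n$ and show that comparability of two of these transposed tuples would force $\pi_i(\vec x)\leq \pi_j(\vec x)$ for every $\vec x\in L^k$, contradicting nontriviality of $L$. The only cosmetic differences are that you phrase it as a contrapositive and propagate the comparability via the sublattice $\pi^{-1}(D)$ rather than via monotonicity of lattice terms, and that you spell out the final step (extracting a strictly comparable pair from $u\neq v$) which the paper leaves implicit.
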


Finally, note that in addition to earlier results on the generation of subspace lattices, a possible connection with cryptology also  motivates the study of small generating sets of lattices; see Cz\'edli \cite{czgboolegen}.

\subsection*{Outline} Section \ref{sect:results} formulates  exactly the results mentioned so far in three theorems, and presents some related statements. Section \ref{sect:coord} recalls some well-known basic facts from coordinatization theory. 
Each of Sections \ref{sect:aproof},  \ref{sect:bproof}, and  \ref{sect:cproof} proves one of the three theorems together with some auxiliary statements. Section \ref{sect:append-ZLprfn} points out how one can extract Gelfand and Ponomarev's result,  quoted right after \eqref{eq:RsPlWrQn}, from Z\'adori's proof given in \cite{zadori2}. Section \ref{sect:maple} presents two Maple programs related to Sections \ref{sect:results} and \ref{sect:bproof}.

\section{The main results and some of their corollaries}
\label{sect:results}

Recall that for $0\leq r\leq m\in\Nplu$ and a prime power $q$, the \emph{Gaussian binomial coefficient} is defined as
\begin{equation}
\binom m r_q:=\frac{(1-q^{m})(1-q^{m-1})\cdots(1-q^{m-r+1})}{(1-q)(1-q^{2})\cdots(1-q^{r)}};
\label{eq:GssnbncFfcnt}
\end{equation}
see, e.g., O'Hara \cite{ohara}\footnote{\href{https://en.wikipedia.org/wiki/Gaussian_binomial_coefficient}{https://en.wikipedia.org/wiki/Gaussian\textunderscore{}binomial\textunderscore{}coefficient} would also do.}.
For convenience, let us agree that for a cardinal $\lambda$, 
\begin{equation}
\text{if }1\leq r\leq m-1\in\Nplu \text{ and }\lambda\geq \aleph_0\text{, then we let }
\binom m r_{\lambda}:=\lambda.
\label{eq:qbinomalephNl}
\end{equation}
This convention is motivated by the fact that \eqref{eq:GssnbncFfcnt}
is known to be the number of the $r$-dimensional subspaces of the\footnote{As the definite article indicates, the $m$-dimensional vector space over a given field in the paper is understood up to isomorphism but its subspaces are not.} $m$-dimensional vector space over the $q$-element field; now the same holds for every $\lambda$-element field in virtue of \eqref{eq:qbinomalephNl}. The upper integer part and the lower integer part of a real number $x$ will be denoted by $\uint x$ and $\lint x$, respectively; for example, $\uint{\sqrt {80}}=\uint 9 =9$ and $\lint{\sqrt {80}}=\lint 8 =8$. More generally, let us agree that for a cardinal number $t$ and a positive integer $n$,
\begin{equation}
\uint{t/n}:=\min\set{m: m n\geq t}; \text { it is a cardinal number.}
\label{eq:flSgrzsmSslsBgr}
\end{equation}

\begin{theorem}\label{thmnok} As in \eqref{eq:RsPlWrQn},
assume that $L=\Subp F V$, where $F$ is an arbitrary field, $3\leq d\in\Nplu$, and $V$ is the $d$-dimensional vector space over $F$. Let
$t:=\mng F$, the minimum of the cardinalities of the generating sets of $F$; see \eqref{eq:rgDtWvStsgHtkw}. Then 
\begin{equation}
4\leq \mng L\leq 4+ \bigl\lceil \frac{t}{\lint{d^2/4}}\bigr\rceil.
\label{eq:nKmZfRtdrCh}
\end{equation} 
\end{theorem}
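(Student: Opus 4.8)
The two inequalities in \eqref{eq:nKmZfRtdrCh} call for separate arguments; the left one is short, and the right one carries the weight. To establish $4\le\mng L$ I would show that $L$ is not $3$-generated. Since $L=\Subp F V$ is a modular lattice, any three of its elements generate a sublattice that is a homomorphic image of the free modular lattice on three generators, a fixed lattice with $28$ elements (Dedekind). Here one uses that the only \emph{simple} quotients of this free lattice are the two-element lattice and the diamond $M_3$, both of height at most two; as $L$ is a simple, complemented, modular lattice of height $d\ge3$, it cannot be such an image, so it is not $3$-generated. This recovers the Gelfand--Ponomarev phenomenon recalled after \eqref{eq:RsPlWrQn}.

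For the upper bound the plan is to produce an explicit generating set of size $4+\bigl\lceil t/\lint{d^2/4}\bigr\rceil$ and to lean on the coordinatization machinery recalled in Section~\ref{sect:coord}. First I would fix a basis $e_1,\dots,e_d$ of $V$ and exhibit four subspaces whose generated sublattice contains a full von Neumann frame of order $d$: the coordinate axes $[e_i]$, together with the relevant diagonal and unit subspaces. Because the coordinate operations of the frame are themselves lattice polynomials, this sublattice then contains every subspace that is \emph{defined over the prime field} of $F$. This step is a reassembly of the four-element constructions of Gelfand--Ponomarev and Z\'adori, and it accounts for the constant $4$ in the bound.

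Next I would encode the rest of $F$. The coordinate ring of the frame is $F$ itself, and since $F$ is a field, addition, subtraction, multiplication, and inversion of coordinate elements are all lattice terms in the frame. Hence, once subspaces encoding a field-generating set of $F$ of size $t=\mng F$ are available, the generated sublattice contains all of $F$ as coordinate elements, and then (by Gaussian elimination realized through joins of coordinatized points) every subspace of $V$; so it is all of $L$. It remains to supply the $t$ generators economically. The crucial \emph{packing} observation is this: splitting the basis as $U=\langle e_1,\dots,e_{\lint{d/2}}\rangle$ and $W=\langle e_{\lint{d/2}+1},\dots,e_d\rangle$, the graph $\{u+\phi(u):u\in U\}$ of a linear map $\phi\colon U\to W$ is a \emph{single} subspace whose matrix carries $\lint{d/2}\cdot(d-\lint{d/2})=\lint{d^2/4}$ independent field entries. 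I would therefore distribute the $t$ generators, $\lint{d^2/4}$ at a time, among $\bigl\lceil t/\lint{d^2/4}\bigr\rceil$ such graph-subspaces; meeting and joining each of these with the already-generated frame elements projects out its individual entries as coordinate elements. Adding these to the four frame-generating subspaces yields a generating set of the claimed size.

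The hardest part is the frame-and-extraction step: one must verify that four subspaces really generate the \emph{entire} order-$d$ frame, and that each individual entry of a packed matrix can be singled out by lattice operations against that frame. The order-three case $d=3$ is delicate, since von Neumann's frame calculus is cleanest in order at least four; there I expect a separate appeal to Desarguesian projective-plane coordinatization. I would isolate the required coordinate-recovery identities in Section~\ref{sect:coord} and verify them once, uniformly for all $d\ge3$.
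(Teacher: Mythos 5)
Your proof strategy for the upper bound in \eqref{eq:nKmZfRtdrCh} is essentially the paper's: four elements generating (a copy of) the prime-field subspace lattice --- the paper obtains these from Gelfand--Ponomarev's theorem together with the embedding of Lemma \ref{lemma:bgzsLm} --- plus $\uint{t/\lint{d^2/4}}$ extra subspaces, each packing $\lint{d^2/4}$ field generators. Your graph-of-a-linear-map subspaces are, up to normalization, the paper's pattern matrices \eqref{eq:krBthmFpBltMJrnkTlZg}; the individual entries are recovered exactly as you describe, by meeting with joins of frame axes (see \eqref{eq:mnTljtntKzRPsRp} and \eqref{eq:mCsnMkRtn}), transporting everything to $\coring d 1$ by the projectivities, and closing under the field operations including the reciprocal term \eqref{eq:vzvmgBrnplF}; the $d=3$ case is handled, as you anticipate, by the Day--Pickering part of Theorem \ref{thm:kgSzsRtlBkkmHdkg}. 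One detail to watch: the paper normalizes each packed row to have last coordinate $-1$ and a free \emph{nonzero} diagonal entry, so every row is a finite projective point and a single extraction formula applies uniformly; with a literal graph, a row can land at infinity when the corresponding entries of the linear map vanish, so the coordinate must then be read off relative to the unit entry at position $i$ instead --- harmless, but it needs saying. The only genuinely different step is the lower bound: you quote Dedekind's structure of the $28$-element free modular lattice (whose subdirectly irreducible quotients are the two-element lattice and $M_3$), while the paper applies Wille's $D_2$-Lemma (Lemma \ref{lemma:Dtwo}) to a hypothetical $3$-element generating set and forces the generators into an $M_3$. Both arguments reduce to the same fact --- a $3$-generated subdirectly irreducible modular lattice has height at most $2$ --- and both need the subdirect irreducibility of $\Subp F V$, which the paper proves inline via Observation \ref{obs:HrmDnkrFnhmNtm}; your version is more conceptual but imports the full description of $FM(3)$, whereas the paper's is self-contained modulo the one-line $D_2$-Lemma.
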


For $t=0$ or $t\in\set{1,2}$,  \eqref{eq:nKmZfRtdrCh} implies that $\mng L = 4$ or $\mng L\leq 5$, respectively. Thus, the results quoted from Z\'adori \cite{zadori2} and Gelfand and Ponomarev \cite{gelfand}  after \eqref{eq:RsPlWrQn} are particular cases of Theorem \ref{thmnok}. Note that 
\begin{equation}
\lint{d^2/4}=\lint{d/2}\cdot\uint{d/2}\text{ and so }
\bigl\lceil \frac{t}{\lint{d^2/4}}\bigr\rceil
=\bigl\lceil \frac{t}{\lint{d/2}\cdot\uint{d/2}}\bigr\rceil
\label{eq:nNzImdrHlTrmLj}
\end{equation}
hold for $3\leq d\in\Nplu$. 
If $t$ is large compared to $d$, then \eqref{expl:mZvsDnStSg} below gives a better lower bound for $\mng L$ than the first inequality in \eqref{eq:nKmZfRtdrCh}.

\begin{theorem}\label{thmk} Let $F$ be a field, let $3\leq d\in\Nplu$, and denote by $V$ and $L$ the $d$-dimensional vector space over $F$ and its subspace lattice $\Subp F V$, respectively. Let $k\in\Nplu$ and, with reference to  \eqref{eq:GssnbncFfcnt} and \eqref{eq:qbinomalephNl},  let 
\begin{equation}
\bmu:=\binom d{\lint{d/2}}_{|F|}.
\label{eq:ckHlrgjCnpzgN}
\end{equation}
Then, using the notations of  \eqref{eq:rgDtWvStsgHtkw}, \eqref{eq:crMdtjlcnmpGSn}, \eqref{eq:flSgrzsmSslsBgr}, and \eqref{eq:nKmZfRtdrCh} and letting $t:=\mng F$,  the following
inequalities and equalities hold for $\mng{L}$ and $\mng{L^k}$:
\begin{gather}
\bigl\lceil \frac{t}{\lint{d^2/4}}\bigr\rceil  \leq \mng {L} \leq \mng {L^k}, 
\label{eq:fDnKnCspJha}\\
\text{if  } k \leq \bmu,\ \text{ then }\ \mng {L^k} \leq  5+ \bigl\lceil \frac{t}{\lint{d^2/4}}\bigr\rceil  ,
\label{eq:fDnKnCspJhb}\\
\mng{L^k} = 4 \text{ provided that }t=0,\ d=3\text{, and } k\in\set{1,2,3,4},\text{ and}
\label{eq:ktTlTkvrglLk}\\
\mng{L^k} = 5 \text{ provided that }t=0,\ d=3,\ k\in\Nplu\text{, and } 5\leq k\leq\bmu.
\label{eq:mtPhfGnprWmt}
\end{gather} 
\end{theorem}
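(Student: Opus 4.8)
The plan is to dispatch the four displayed claims one at a time. For \eqref{eq:fDnKnCspJha}, the right-hand inequality $\mng L\le\mng{L^k}$ is the general fact that a surjective lattice homomorphism cannot increase the minimal size of a generating set: the first coordinate projection $L^k\to L$ maps any generating set of $L^k$ onto a generating set of $L$. For the left-hand inequality I would argue by coordinates. Fix a basis, so $V=\vecspace F d$, and attach to every subspace its reduced row echelon matrix; given $X\subseteq L$, let $F_0\le F$ be the subfield generated by all entries of these matrices for the members of $X$. The subspaces that are spanned by vectors of $F_0^d$ (equivalently, whose echelon entries lie in $F_0$) form a sublattice, because a sum of two such subspaces obviously stays $F_0$-rational and an intersection is the $F$-solution space of a homogeneous linear system with coefficients in $F_0$, hence is $F_0$-rational as well. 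Consequently the sublattice generated by $X$ consists solely of $F_0$-rational subspaces; if it equals $L$, then every line $F\cdot(1,\alpha,0,\dots,0)$ is $F_0$-rational, which forces $\alpha\in F_0$ for all $\alpha$ and thus $F_0=F$. Since an $r$-dimensional subspace has at most $r(d-r)\le\lint{d^2/4}$ nonconstant echelon entries, $F=F_0$ is generated by at most $|X|\cdot\lint{d^2/4}$ elements, so $t=\mng F\le|X|\cdot\lint{d^2/4}$ and $|X|\ge\lceil t/\lint{d^2/4}\rceil$; as $X$ was arbitrary, $\mng L\ge\lceil t/\lint{d^2/4}\rceil$.

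To prove \eqref{eq:fDnKnCspJhb} I would start from a generating set $G$ of $L$ of size $s=4+\lceil t/\lint{d^2/4}\rceil$ provided by Theorem \ref{thmnok}, and form the diagonal elements $(g,\dots,g)$ with $g\in G$, which generate the diagonal sublattice $\set{(x,\dots,x):x\in L}$ of $L^k$. To these I would add a single coordinate-tagging element $h=(h_1,\dots,h_k)$, where $h_1,\dots,h_k$ are pairwise distinct subspaces of dimension $\lint{d/2}$; such a choice is available exactly because there are $\bmu$ of them (see \eqref{eq:ckHlrgjCnpzgN}) and $k\le\bmu$. The target is to show that the sublattice generated by these $s+1$ elements contains, for each coordinate $i$, the element $e_i=(0_L,\dots,1_L,\dots,0_L)$ whose only nonzero entry $1_L$ sits in position $i$; once the $e_i$ are available, every $(a_1,\dots,a_k)\in L^k$ is recovered as $\bigvee_{i=1}^k\bigl((a_i,\dots,a_i)\wedge e_i\bigr)$, so the $s+1$ elements generate $L^k$ and $\mng{L^k}\le 5+\lceil t/\lint{d^2/4}\rceil$. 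Producing the $e_i$ from $h$ and the diagonal --- separating all $\binom k2$ pairs of coordinates at once, using only that the tags $h_j$ are distinct incomparable middle-dimensional subspaces and that $\Sub V$ is rich enough to separate any two distinct subspaces by a lattice term with diagonal parameters --- is, I expect, the main obstacle of the theorem.

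Finally, \eqref{eq:ktTlTkvrglLk} and \eqref{eq:mtPhfGnprWmt} are the prime-field plane case $t=0$, $d=3$. Their upper bounds are cheap: with $t=0$ we have $\lceil t/\lint{d^2/4}\rceil=0$, so \eqref{eq:fDnKnCspJhb} already yields $\mng{L^k}\le 5$ for $k\le\bmu$, covering \eqref{eq:mtPhfGnprWmt}, while \eqref{eq:ktTlTkvrglLk} needs the sharper claim that for $k\in\set{1,2,3,4}$ one can save the extra generator and generate $L^k$ by a cleverly perturbed quadruple that projects onto a generating set of $L$ in each coordinate yet still separates the at most four coordinates. The lower bounds are where the content lies: $\mng{L^k}\ge 4$ in \eqref{eq:ktTlTkvrglLk} is immediate from \eqref{eq:fDnKnCspJha} and the bound $\mng L\ge 4$ of \eqref{eq:nKmZfRtdrCh}, but the matching $\mng{L^k}\ge 5$ in \eqref{eq:mtPhfGnprWmt}, i.e.\ that $L^k$ stops being $4$-generated as soon as $k\ge 5$, cannot be read off from Observation \ref{obs:hbNvlnlgnD} (the projective plane has antichains much longer than $5$). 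I would derive it, like the $k\le 4$ saving above, from the dedicated non-generation analysis for the prime-field plane carried out for the direct-product theorem in Section \ref{sect:cproof}; the sharp jump from $4$ to $5$ at $k=5$ is the subtlest feature of the statement.
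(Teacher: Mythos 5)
Your treatment of \eqref{eq:fDnKnCspJha} is sound and essentially the paper's argument: the projection remark gives the right-hand inequality, and your subfield-of-echelon-entries argument for the left-hand one is the same idea the paper implements via Lemma \ref{lemma:bgzsLm} (there one takes the subfield $P$ generated by the non-$0$, non-$1$ echelon entries, observes $|H|\leq mM<t$ forces $P\subsetneq F$, and derives a contradiction from surjectivity of the induced embedding). However, for the remaining three claims your proposal stops exactly where the work begins, so there are genuine gaps. (i) For \eqref{eq:fDnKnCspJhb} you set up the correct configuration (diagonal copies of a generating set of size $4+\lceil t/\lint{d^2/4}\rceil$ plus one tagging vector whose coordinates run over the $\bmu$ subspaces of dimension $\lint{d/2}$), but you explicitly defer ``the main obstacle,'' namely exhibiting the separating terms. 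The paper does this concretely: for distinct atoms $a,b$ Observation \ref{obs:HrmDnkrFnhmNtm} produces a third atom $ab$ with $\set{0,a,ab,b,ab\vee b}\cong M_3$, and the term \eqref{eq:szhvmklDsrbnB}, built from a $d$-element atom set $B$ with $\bigvee B=1$ and $\lint{d/2}$-element atom sets $S_i$ with $\bigvee S_i=u_i$, evaluates to $1$ on the $i$-th coordinate (since $e\leq u_i$ makes each meetand $be\vee e\geq b$) and to $0$ elsewhere (since some $e\in S_i$ has $e\wedge u_j=0$, so the meetand collapses to $be$ and $b\wedge be=0$). Without this $M_3$-based construction the upper bound is not proved.

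(ii) For the upper bound in \eqref{eq:ktTlTkvrglLk} you again only assert that a ``cleverly perturbed quadruple'' exists; the paper gives it explicitly (the $4\times4$ matrix $U$ with rows that are placements of a line $e$ among three points $a,b,c$ in general position, together with the terms \eqref{eq:bwnRblmRDcGlh} and Lemma \ref{lemma:mGszRzbTr} to verify both conditions of Lemma \ref{lemma:fGtln}). (iii) Most seriously, the lower bound $\mng{L^k}\geq5$ for $k\geq5$ in \eqref{eq:mtPhfGnprWmt} is the hardest part of the theorem, and your plan to ``derive it from the non-generation analysis for the direct-product theorem in Section \ref{sect:cproof}'' is both misplaced and circular: that analysis lives in Section \ref{sect:bproof} as part of the proof of this very theorem, and the proof of Theorem \ref{thm:prd} in Section \ref{sect:cproof} \emph{cites} \eqref{eq:mtPhfGnprWmt} for its necessity direction, so nothing there can be used as input here. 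The actual argument is a case analysis on types and fine types of the coordinate generating quadruples: each must be in general position (Fact \ref{fact:nlLlslTshbr}), type $(2,2)$ is impossible, types $(4,0)$ and $(0,4)$ force $k=1$ (via Fact \ref{fact:fnlPvdlFhLWlfr} and the order-preservation contradiction \eqref{eq:fltskBrDskRcdlv}, with a separate hand/computer check for the Fano plane), coexistence of $(3,1)$ and $(1,3)$ forces $k=2$, and each of the four fine types of type $(3,1)$ can occur at most once, giving $k\leq4$. None of this appears in your proposal.
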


As $\mu$ can be an infinite cardinal number, \eqref{eq:mtPhfGnprWmt} repeats that $k\in\Nplu$.

\begin{theorem}\label{thm:prd} Let $\lambda$ be a nonzero ordinal number, and assume that for each $\iota<\lambda$, $V_\iota$ is the $3$-dimensional vector space over a prime field $F_\iota$. Let $L$ be the direct product of the corresponding subspace lattices, that is, 
\begin{equation}
L:=\prod_{\iota<\lambda} \Sub{V_\iota}.
\label{eq:tSvnRnkDglhlg}
\end{equation}
Then $\mng L=4$ if and only if $\lambda$ is finite, $\lambda\neq 0$, and, up to isomorphism,  each prime field occurs at most four times in the sequence $(F_\iota:\iota<\lambda)$.  
\end{theorem}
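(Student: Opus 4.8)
The plan is to prove the two directions of the biconditional separately. Throughout, write $S_\iota:=\Sub{V_\iota}$. Since each $V_\iota$ is $3$-dimensional, hence of dimension $\geq 2$, each $S_\iota$ is a \emph{simple} complemented modular lattice of length $3$; and since the projective planes involved are Desarguesian, the coordinate field can be recovered from the subspace lattice up to isomorphism, so that $S_\iota\cong S_{\iota'}$ holds exactly when $F_\iota$ and $F_{\iota'}$ have the same characteristic. Thus ``each prime field occurs at most four times up to isomorphism'' is synonymous with ``each isomorphism type among the $S_\iota$ occurs at most four times'', and I use the two phrasings interchangeably. I also use repeatedly that a projection of $L$ onto a subfamily of its coordinates is a surjective lattice homomorphism, so that the image of a generating set is a generating set and $\mng{\text{image}}\leq\mng L$.

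For the ``only if'' direction, assume $\mng L=4$. That $\lambda\neq 0$ is clear, the empty product being a one-element lattice. Finiteness of $\lambda$ I would obtain from a cardinality count: a finitely generated lattice is at most countable, being a homomorphic image of the (countable) free lattice on finitely many generators; were $\lambda$ infinite, then $|L|\geq 2^{\aleph_0}$ would be uncountable, contradicting $\mng L=4$. Finally, if some isomorphism type occurred at least five times, then projecting onto five such coordinates exhibits $\Sub W^{5}$ (with $W$ the relevant $3$-dimensional space over a prime field $F$) as a homomorphic image of $L$, whence $\mng{\Sub W^{5}}\leq 4$. But \eqref{eq:mtPhfGnprWmt} gives $\mng{\Sub W^{5}}=5$: here $t=0$, $d=3$, and the quantity $\bmu=\binom 3 1_{|F|}$ of \eqref{eq:ckHlrgjCnpzgN} satisfies $\bmu\geq 5$ (indeed $\bmu\geq 7$ for a finite prime field and $\bmu=\aleph_0$ for $F=\mathbb Q$). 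This contradiction finishes necessity.

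For the ``if'' direction, collect the finitely many factors by isomorphism type and write $L\cong A_1\times\dots\times A_s$ with $A_j\cong \Sub{W_j}^{m_j}$, where $1\leq m_j\leq 4$ and $\Sub{W_1},\dots,\Sub{W_s}$ are pairwise non-isomorphic simple lattices. The lower bound $\mng L\geq 4$ follows by projecting onto one coordinate and using $\mng{\Sub{W_j}}=4$, the case $k=1$ of \eqref{eq:ktTlTkvrglLk}. For the upper bound, \eqref{eq:ktTlTkvrglLk} (with $t=0$, $d=3$, and $k=m_j\in\{1,2,3,4\}$) shows that each block $A_j$ is $4$-generated; I fix a $4$-element generating set $\{a_1^{(j)},\dots,a_4^{(j)}\}$ of $A_j$ and form $z_i:=(a_i^{(1)},\dots,a_i^{(s)})$ for $i=1,2,3,4$. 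The sublattice $S:=\langle z_1,\dots,z_4\rangle$ projects onto every $A_j$, so it is subdirect; the remaining task is to upgrade this to the equality $S=A_1\times\dots\times A_s$.

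This merging is the step I expect to be the main obstacle, and it is where coprimeness of the blocks must be exploited: because $\Sub{W_1},\dots,\Sub{W_s}$ are pairwise non-isomorphic simple lattices, any two distinct blocks $A_i$, $A_j$ share no nontrivial common homomorphic image. Concretely, it suffices to produce inside $S$ the complementary central elements $e_j$, each equal to the top on block $A_j$ and to the bottom on the other blocks: given these, for any target $(x_1,\dots,x_s)$ one selects (using the generating property of each block, applied coordinatewise) elements of $S$ that agree with $x_j$ on block $j$, meets them with $e_j$, and joins over $j$ to recover $(x_1,\dots,x_s)$, so that $S$ is the full product. Building the $e_j$ is the delicate point: one must separate the blocks by a lattice term, which I would do by exploiting that the simple factors $\Sub{W_j}$ have pairwise different ``sizes'' (the infinite factor over $\mathbb Q$ and the finite ones over the various $\mathbb F_p$ being told apart through the coordinatization recalled in Section \ref{sect:coord}), so that a suitable join of generators collapses to the top on some blocks while staying strictly below it on the others. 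Once the $e_j$ are available one gets $\mng L\leq 4$, and together with the lower bound this yields $\mng L=4$, completing the proof.
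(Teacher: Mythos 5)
Your ``only if'' direction is correct and coincides with the paper's: the cardinality argument for finiteness of $\lambda$, and the projection onto five coordinates of a repeated type combined with \eqref{eq:mtPhfGnprWmt}, are exactly what the paper does. The ``if'' direction, however, has a genuine gap at precisely the point you flag as ``the delicate point.'' Your reduction to producing the complementary central elements $e_j$ (equivalently, condition \eqref{lemma:bfGtln} of Lemma \ref{lemma:fGtln}) is the right frame, but the justification you offer --- that pairwise non-isomorphic simple blocks ``share no nontrivial common homomorphic image,'' so the subdirect sublattice $S$ must be the full product --- is not a valid principle for lattices. Congruences of lattices need not permute, and a subdirect product of two non-isomorphic simple lattices $A$ and $B$ can be proper: for instance $S=(\set{0_A}\times\set{0_B})\cup(\set{1_A}\times B)$ is a proper subdirect sublattice of $A\times B$ whenever $A$ has a top and a bottom, even though $A$ and $B$ have no common nontrivial quotient; and such an $S$ can perfectly well be generated by elements whose projections generate $A$ and $B$. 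So nothing short of an explicit construction of the separating terms will do.

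That construction is the actual content of the paper's sufficiency proof, and it has two layers you do not address. First, separating the (up to four) copies \emph{within} one isomorphism type: the paper arranges the four generating quadruples of $\Sub{W_j}^4$ as cyclic shifts of a configuration of fine type $(1,1,1,2)$ in general position (the matrix $U$ with entries $e,a,b,c$) and builds the terms $\fer_\nu$ of \eqref{eq:bwnRblmRDcGlh}, which evaluate to $\kdelta L \nu \kappa$ because a line and a point behave differently under the terms $w_i$ and $\hij i j$. Your blocks $A_j=\Sub{W_j}^{m_j}$ with $m_j>1$ are not simple, so even granting a separation of blocks you would still need this step. Second, separating \emph{distinct} isomorphism types: the paper does this through the von Neumann coordinate ring of Section \ref{sect:coord}, using the term $\nater{(t_i)}$ that computes $t_i\cdot 1$ in $\coring 3 1\cong F_j$ (which collapses to the zero $p_3$ of the ring exactly when $\operatorname{char}F_j$ divides $t_i$), together with the reciprocal term \eqref{eq:vzvmgBrnplF} to handle the block over $\mathbb Q$. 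Your phrase ``pairwise different sizes'' gestures at this, but without these coordinate-ring terms there is no argument. In short: the skeleton is right, the lower bound and necessity are right, but the heart of the sufficiency proof is missing and the general-principle shortcut you propose in its place is false.
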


It does not seem to be easy to generalize \eqref{eq:ktTlTkvrglLk} and \eqref{eq:mtPhfGnprWmt} to $3<d\in\Nplu$.  
Table \ref{table:szbklngy}, obtained by computer algebra\footnote{Maple V, see Footnote \ref{foOt:mapLe} for more details, but many others would also do.}, shows that the Gaussian binomial coefficient $\bmu$ occurring in \eqref{eq:ckHlrgjCnpzgN} is large in general. 
\begin{table}[htb]
\begin{tabular}{l|r|r|r|r|r|r|r|r|r|r}
$q=$ &  2& 3& 4& 5  \cr
\hline
$\bmu\approx$ & $1.540\cdot 10^{482}$ & 
$  4.423\cdot 10^{763}$& $ 2.871 \cdot 10^{963}$ & $ 2.958 \cdot 10^{1118}$\cr
\hline\hline
$q=$ &  7& 8& 9& 11  \cr
\hline
$\bmu\approx$ & $ 1.715 \cdot 10^{1352}$ & 
$ 1.023\cdot 10^{1445}$& $  7.002 \cdot 10^{1526}$ & $1.878 \cdot 10^{1666}$\cr
\hline\hline
$q=$ &  13& 16& 17& 19  \cr
\hline
$\bmu\approx$ & $ 2.223\cdot 10^{1782}$ & 
$  4.186 \cdot 10^{1926}$& $ 5.574  \cdot 10^{1968}$ & $ 1.073 \cdot 10^{2046}$\cr
\hline\hline
\end{tabular}
\caption{For $d=80$, the approximate values of some Gaussian binomial coefficients occurring in \eqref{eq:ckHlrgjCnpzgN} 
} 
\label{table:szbklngy}
\end{table}

The following remark is trivial since $L^h$ and $\prod_{i\in S}L_i$ in it are  homomorphic images of $L^k$ and $\prod_{i\in [k]} L_i$ (where $[k]=\set{1,\dots,k}$),  respectively.

\begin{remark}\label{rem:sbrStvgnBx}
For a lattice $L$ and $h,k,n\in\Nplu$ such that $h<k$, if $L^k$ is $n$-generated, then $\mng{L^h}\leq n$. More generally, if  $\prod_{i\in [k]} L_i$  is $n$-generated  and $S\subseteq [k]$,  then $\prod_{i\in S}L_i$ has an at most $n$-element generating set. 
\end{remark}

The following easy lemma could be of separate interest. 
For a subset $X$ of a vector space $V$ over a field $K$, 
let $\fvgener K X$ denote the subspace of $V$ generated by $X$; we can also write $\vgener X$ if $K$ is clear from the context.

\begin{lemma}\label{lemma:bgzsLm}
Let $F$ be a field with a subfield $P$ (that is, let $F\vert P$ be a field extension) and let $3\leq d\in \Nplu$.  Furthermore, let 
$V'= \vecspace P d$ and $V= \vecspace F d$ be the $d$-dimensional vector spaces $($consisting of $d$-tuples$)$ over $P$ and $F$, respectively. 
Then 
\begin{equation}
\phi\colon \Subp{P}{V'}\to \Subp F V,\text{ defined by }X\mapsto \fvgener F X,
\label{eq:mMtlnZknZltcwrTb}
\end{equation}
is a lattice embedding. Furthermore, $\phi$ preserves the length, the covering relation, the smallest element $0$, and the largest element $1$. We also have that 
for any subset $H$ of $V'$, $\phi(\fvgener P H)=\fvgener F H$.
\end{lemma}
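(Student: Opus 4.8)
The plan is to reduce everything to one linear-algebra fact: a subset of $V'=\vecspace P d$ that is linearly independent over $P$ stays linearly independent over $F$ when viewed inside $V=\vecspace F d$ through the inclusion $\vecspace P d\subseteq\vecspace F d$. I would prove this first. If $b_1,\dots,b_r\in \vecspace P d$ are $P$-independent, extend them to a $P$-basis $b_1,\dots,b_d$ of $\vecspace P d$ and let $B$ be the $d\times d$ matrix over $P$ with these rows; then $\det B\neq 0$ in $P$, hence $\det B\neq 0$ in $F$, so the rows are $F$-independent and in particular $b_1,\dots,b_r$ are $F$-independent. It follows that whenever $X\in\Subp P{V'}$ has a $P$-basis $b_1,\dots,b_r$, this basis is an $F$-basis of $\phi(X)=\fvgener F X$, so $\phi(X)$ is genuinely a subspace of $V$ and $\dim_F\phi(X)=\dim_P X$.

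Next I would clear away the easy assertions. For $H\subseteq V'$ the chain $H\subseteq\fvgener P H\subseteq\fvgener F H$ gives $\fvgener F H\subseteq\fvgener F{\fvgener P H}\subseteq\fvgener F H$, hence $\phi(\fvgener P H)=\fvgener F{\fvgener P H}=\fvgener F H$; this is the last claim of the lemma, and with $H=X\cup Y$ it yields join preservation $\phi(X\vee Y)=\fvgener F{X\cup Y}=\phi(X)\vee\phi(Y)$. The dimension equality gives $\phi(0)=0$ and, since $\vecspace P d$ contains the standard basis of $\vecspace F d$, also $\phi(1)=1$; together with order-preservation it yields preservation of length and of the covering relation. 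For injectivity I would check $X=\phi(X)\cap V'$: the inclusion $X\subseteq\phi(X)\cap V'$ is clear, and for the reverse one extends a $P$-basis $b_1,\dots,b_r$ of $X$ to a $P$-basis $b_1,\dots,b_d$ of $\vecspace P d$, which by the fact above is an $F$-basis of $V$; any $v\in\vecspace P d\cap\fvgener F X$ then has $P$-coordinates over $b_1,\dots,b_d$ that, by uniqueness of the $F$-coordinates, are supported on $b_1,\dots,b_r$, whence $v\in X$, giving injectivity.

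The main obstacle is meet preservation, $\phi(X\cap Y)=\phi(X)\cap\phi(Y)$. The inclusion $\phi(X\cap Y)\subseteq\phi(X)\cap\phi(Y)$ is immediate from monotonicity, so the real work is the reverse containment $\fvgener F X\cap\fvgener F Y\subseteq\fvgener F{X\cap Y}$. Here I would use a simultaneous basis: take a $P$-basis $b_1,\dots,b_s$ of $X\cap Y$ and extend it to a $P$-basis $b_1,\dots,b_s,c_1,\dots,c_p$ of $X$ and to a $P$-basis $b_1,\dots,b_s,d_1,\dots,d_q$ of $Y$. Counting dimensions via $\dim_P(X+Y)=\dim_P X+\dim_P Y-\dim_P(X\cap Y)$ shows that the combined list $b_1,\dots,b_s,c_1,\dots,c_p,d_1,\dots,d_q$ is $P$-independent, hence $F$-independent by the crucial fact. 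Now take $v\in\fvgener F X\cap\fvgener F Y$ and write $v=\sum_i a_ib_i+\sum_j\beta_j c_j=\sum_i a_i'b_i+\sum_k\gamma_k d_k$ with all coefficients in $F$; subtracting and using $F$-independence forces every $\beta_j$ and $\gamma_k$ to vanish, so $v=\sum_i a_ib_i\in\fvgener F{X\cap Y}=\phi(X\cap Y)$. This establishes meet preservation and completes the verification that $\phi$ is a $0$-, $1$-, length- and cover-preserving lattice embedding.
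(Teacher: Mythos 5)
Your proof is correct, and up to the last step it follows the same path as the paper: the final claim $\phi(\fvgener P H)=\fvgener F H$ via idempotence of spanning, dimension preservation via a determinant/rank argument, and join preservation as a consequence. The genuine divergence is in meet preservation, which is the only nontrivial step. The paper gets it indirectly: it first shows that $\phi$ is a cover-preserving order embedding (the order-embedding part by a dimension-count contradiction) and then invokes Lemma~1 of Wild's paper, which says that a cover-preserving order embedding between lower semimodular lattices is automatically a meet-embedding. You instead prove $\fvgener F X\cap \fvgener F Y\subseteq \fvgener F{X\cap Y}$ directly, by choosing a $P$-basis of $X\cap Y$, extending it compatibly to bases of $X$ and of $Y$, checking by the dimension formula that the union of these lists is $P$-independent (hence $F$-independent by your key fact), and reading off from the two coordinate representations of an element of the intersection that the ``extra'' coefficients vanish. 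Your route is self-contained and purely linear-algebraic, at the cost of a slightly longer basis bookkeeping argument; the paper's route is shorter on the page but imports a lattice-theoretic result about semimodular lattices. Your handling of injectivity (via $X=\phi(X)\cap V'$) also differs cosmetically from the paper's order-embedding argument, but both are routine; note only that you should state explicitly, as you implicitly do, that an injective $\vee$- and $\wedge$-homomorphism is an order embedding, so that cover preservation follows from dimension preservation. No gaps.
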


In the forthcoming Example \ref{expl:mZvsDnStSg}, to be proved in  Section \ref{sect:cproof}, the number $80$ makes one and a half dozen appearances. Although most instances could be replaced by any positive integer greater than $1$, we have opted for $80$ in keeping with the paper's dedication.

\begin{example}\label{expl:mZvsDnStSg} Let $F$ be a field and let $3\leq d\in\Nplu$. Let $L$ stand for the subspace lattice $\Sub V=\Sub{\vecspace F d}$ of the $d$-dimensional vector space $V$ over $F$. Then  the following six assertions hold.

\textup{(a)} If $\alpha_1$, \dots, $\alpha_{80}$ are (not necessarily distinct) algebraic irrational numbers over the field $\mathbb Q$ of rational numbers and  $F=\mathbb Q(\alpha_1,\dots,\alpha_{80})$ is the field that these numbers generate,  then  $L$ 
has a $5$-element generating set. Furthermore, for every $2\leq k\in\Nplu$,  $L^k$ has a $6$-element generating set. In particular, if 
\[F=\mathbb Q\bigl(\sqrt{2023},\sqrt 2,  \sqrt[\leftroot{2}\uproot{2}{3}]{3},   \sqrt[\leftroot{2}\uproot{2}{4}]{4},
\sqrt[\leftroot{2}\uproot{2}{5}]{5},
\sqrt[\leftroot{2}\uproot{2}{6}]{6},\dots,
\sqrt[\leftroot{2}\uproot{2}{80}]{80}\,
\bigr),
\]
then $L^{80}$ has a $6$-element generating set. 

\textup{(b)} Let $\beta_1$, \dots, $\beta_{80}$ be algebraically independent transcendental numbers over $\mathbb Q$ and let  $F:=\mathbb Q(\beta_1,\dots,\beta_{80})$. If $d=3$, then $L$ has a $44$-element generating set and each of its generating sets consists of at least $40$ elements. If $d=8$, then $L$ has a $9$ element generating set but not a $4$-element one.

\textup{(c)} If $\gamma_1$, \dots, $\gamma_{80}$ are algebraically independent transcendental numbers over $\mathbb Q$,  $F=\mathbb Q(\gamma_1,\dots,\gamma_{80})$,  $d=80^{80}$, and $k=80^{80d}$, then $L$ has a $5$-element generating set and $L^k$ has a $6$-element one.

\textup{(d)} If $|F|=19$ or $F=\mathbb Q$, $d=80$, and $k=10^{2046}$, then $L^k$  can be generated by five elements.

\textup{(e)} If $F=\mathbb A$, the field of algebraic numbers, then $L$ is not finitely generated.

\textup{(f)} If $F=\mathbb Q(\pi^{80}, \sqrt[80]{80})$, where $\pi\approx 3.141\,592\,653\,589\,793$ is the well-known transcendental constant, 
then $L$ has a $5$-element generating set while $L^{80}$ has a $6$-element one.  
\end{example}

\begin{remark}\label{rem:ncdGnnJhNkszN}
For  $F=\mathbb Q(\pi^{80}, \sqrt[80]{80})$ in Example \ref{expl:mZvsDnStSg}\textup{(f)}, $\mng F=2$.
\end{remark}

\nothing{
\begin{remark}
From a result announced in Herrmann, Ringel, and Wille \cite{herrmannringelwille},
Z\'adori \cite{zadori2} derived that $\Subp F V$ cannot be generated by four elements provided that $F$ is a finite non-prime field and $3\leq \dim (V)$ is finite. Although his argument seems to work with the assumption $\mng F>0$ without finiteness and seems to provide a better lower bound for $\mng {L}$  than \eqref{eq:fDnKnCspJha} for  $d$  large, we do not follow this plan in our theorems.
\end{remark}}

\section{Some basic facts from the coordinatization theory of lattices}\label{sect:coord}

The proof of Theorem \ref{thmnok} grew out from  the coordinatization theory of Arguesian lattices. This theory was introduced by  J.\ von Neumann; see, for example, Artmann \cite{artmann}, Day and Pickering \cite{daypick}, Freese \cite{freese}, Herrmann \cite{herrmann} and \cite{herrmann84}, 
and von Neumann \cite{neumann,bookvonneumann}. As these papers but Herrmann \cite{herrmann} and \cite{neumann} are referenced in Cz\'edli and Skublics \cite{czgskublics}, where the treatment and the notations are unified, it will be convenient to reference also \cite{czgskublics}\footnote{At the time of writing, a preprint of this paper is freely available from 
\href{http://tinyurl.com/czedli-skublics}{http://tinyurl.com/czedli-skublics} or, equivalently, it can be found in the author's website, 
\href{https://www.math.u-szeged.hu/~czedli/}{https://www.math.u-szeged.hu/~czedli/} = 
\href{http://tinyurl.com/g-czedli}{http://tinyurl.com/g-czedli} .} even though no result that was first proved in \cite{czgskublics} is needed here. 
Actually, we need only the easy first step from coordinatization theory, and the statements of this section are straightforward to verify with elementary computations in Linear Algebra. 
In the paper, we  often use the notation
\begin{equation*}
[i]:=\set{1,2,\dots,i}\text{ for }i\in\Nnul;\text{ in particular, }[0]:=\emptyset.
\end{equation*}
As a general assumption for the whole section, we assume that $F$ is a field, $3\leq d\in\Nplu$, and   $V=\vecspace F d$ is the $d$-dimensional vector space over $F$.  We let $v_i:=(0,\dots,0,1,0,\dots,0)\in V$, with $1$ at the $i$-th position, for $i\in [d]$. We turn $V=\vecspace F d$ into the $(d-1)$-dimensional projective space $\psd=\psd(F)$ over $F $ in the usual way except that we use $-1$ instead of $1$ for ``finite'' points\footnote{The $-1$ is explained by the minus sign in von Neumann's choice of $c_{i,j}=F (v_i-v_j)$, see later, and by our intention that the unit $c_{1,4}$ of $\coring 4 1$, to be defined soon,  in Figure \ref{figinv} should be to the right of the  zero  $a_4$ of the ring.}; see, e.g., Figure \ref{figpsd}.

\begin{figure}[ht] 
\centerline{ \includegraphics[scale=1.0]{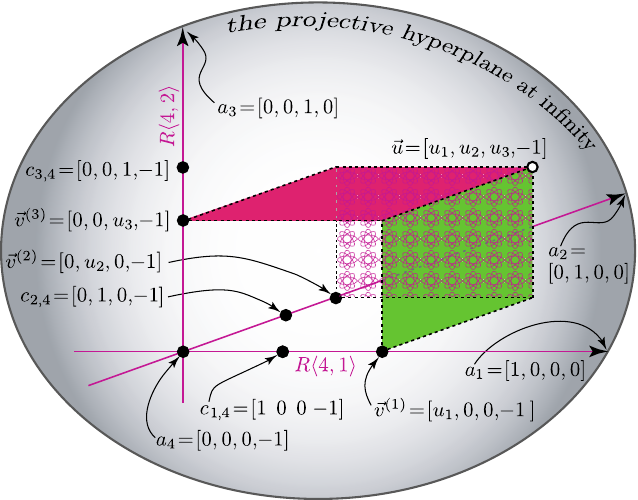}} \caption{The 3-dimensional projective space}\label{figpsd}
\end{figure}

The points and the lines of  $\psd$ are the 1-dimensional subspaces and the 2-dimensional subspaces of $V$, respectively. A 1-dimensional subspace of $V$ is either of the form $F (x_1,\dots,x_{d-1},-1)$ and then  $[x_1,\dots,x_{d-1},-1]$ denotes 
(in other words, coordinatizes)
the corresponding (\emph{projective}) \emph{point} of $\psd$, or this subspace is of the form $F (x_1,\dots,x_{d-1},0)$ and then 
 $[x_1,\dots,x_{d-1},0]$ stands for the corresponding projective point. We call the points of the form $[x_1,\dots,x_{d-1},0]$ \emph{points at infinity}  (even if $F $ is finite and thus so is $\psd$); the rest of the points are said to by \emph{finite points}. 
The finite points form the $(d-1)$-dimensional affine space over $F $. As usual,
this affine space visualizes $\psd$ so that the finite points are the points of the affine space, while an infinite projective point $[x_1,\dots,x_{d-1},0]$ is the \emph{direction} $(x_1,\dots,x_{d-1})$ in the affine space.
(Of course, $(\lambda x_1,\dots,\lambda x_{d-1})$ is the same direction and $[\lambda x_1,\dots,\lambda x_{d-1},0]$ is the same projective point at infinity for any $\lambda\in F \setminus\set 0$.) Some sort of visualization of $\psd$ for $d=4$ is given in Figure \ref{figpsd}; most parts of this figure will be used only later.

We often consider the projective space $\psd$ and a line $h$ of $\psd$ as the set of all points of $\psd$ and the set of points lying on $h$. For points $x\neq y$ in $\psd$, let $\prline x y$ denote the unique line through $x$ and $y$. Following, say, Gr\"atzer \cite[page 376]{ggfound}, a subset $X$ of $\psd$ is said to be a \emph{subspace} of $\psd$ if whenever $x$ and $y$ are distinct points in $X$, then $X$ contains all points of the line $\prline x y$. The subspaces of $\psd$ form a lattice, which we denote by $\Sub\psd=\bigl(\Sub\psd;\subseteq\bigr)$. 
For convenience (and following the traditions), if $x$ and $y$ are distinct points of $\psd$, then we often write $x\vee y$ instead of $\prline x y$, and we usually write $x\in\Sub\psd$ instead of the more precise  $\set {x} \in \Sub\psd$. When we think of their coordinates,  we denote the points of $\psd$ by $\vec x$, $\vec u$, etc..
There is a well-known isomorphism $\eta$ from $L=\Subp F V$ to the subspace lattice $\Sub\psd$. 
Namely, $\eta\colon L\to \Sub\psd$ is defined by the rule 
\begin{gather}
\eta(X):=\{P\in \psd:\text{the point }P\text{ corresponds to a}
\cr
1\text{-dimensional subspace of }X 
\} \in \Sub\psd
\label{eq:mtzTbnkStmlDsl}
\end{gather}
for $X\in \Subp F V$.
We do not make a sharp distinction between $X$ and $\eta(X)$. We use $\eta(X)$ and the projective space to explain and visualize the proofs. The respective (and straightforward) computations can be done with $X$ in $\Subp F V$ or with $\eta(X)$ in $\psd=\psd(F)$ based on the following fact, which is well known and it can easily be derived from \eqref{eq:mtzTbnkStmlDsl}.
As in Neumann \cite{bookvonneumann} and in Example 2.1 right after (2.3) in \cite{czgskublics}, the components of the (\emph{canonical $($extended normalized von Neumann})) \emph{ $d$-frame} 
\begin{equation}
\vec f=(\vec a,\vec c)=\bigl((a_1,\dots,a_d),\,(c_{i,j}:  i,j\in[d],\,i\neq j) 
\label{eq:sLskPnspTfXskQv}
\end{equation}
are the  $1$-dimensional subspaces $a_i=Fv_i\in V\in \Subp F V$ for $i\in [d]$ and $c_{i,j}=F(v_i-v_j)$  for $i\neq j\in[d]$ in $\Subp F V$. Thus, by \eqref{eq:mtzTbnkStmlDsl}, the components of $\vec f$  are the following points
\begin{gather}
a_i=[0,\dots,0,1,0\dots,0] \text{  for }i\in [d-1],\ a_d=[0,\dots,0,-1],
\label{eq:tdrmBchSnnRmNa}\\
\text{and } c_{i,j}=[0,\dots,0,1,0\dots,0,-1,0,\dots,0]    \text{ for }i\neq j\in[d],
\label{eq:tdrmBchSnnRmNb} 
\end{gather}
where the unit 1 is at the $i$-th position in both cases and the $-1$ is at the $j$-th position, in $\Sub\psd$. Note that $c_{i,j}=c_{j,i}$ for $i,j\in[d]$ distinct  but, according to \eqref{eq:tdrmBchSnnRmNb}, their \emph{canonical forms} are different\footnote{When we consider $c_{i,j}$ an element of $\coring j i$, to be defined soon, then we  use the canonical form given in  \eqref{eq:tdrmBchSnnRmNb}.}.

For $i,j,k\in[d]$ pairwise distinct,
repeating what von Neumann and his followers did but using the notation of  \cite[(2.5)]{czgskublics}, the $(i,j)$-th \emph{coordinate ring} of $L
$ with respect to  $\vec f$ is \begin{align}
\coring i j =\coring {a_i}{a_j}:=\set{x\in L: x\vee a_j=a_i\vee a_j,\,\, x\wedge a_j=0}.
\label{eq:coring}
\end{align}
To define the ring operations, we need the following \emph{projectivities} from Neumann \cite{bookvonneumann}; we use the visual notation from Cz\'edli and Skublics \cite{czgskublics}.
So for pairwise distinct parameters $p,q,r\in [d]$, let
\begin{align}
& \bpf p q r q\colon [0,a_p\vee a_q]\to[0,a_r\vee a_q],\quad
x\mapsto (x\vee c_{p,r})\wedge (a_r\vee a_q),
\label{eq:bprkTsd}\\
&\bpf p q p r\colon [0,a_p\vee a_q]\to[0,a_p\vee a_r],\quad
x\mapsto (x\vee c_{q,r})\wedge (a_p\vee a_r).
\label{eq:bpHrd}
\end{align}
For $i,j,k\in[d]$ pairwise distinct
and $x,y\in\coring i j$,  we let
\begin{align}
x\bplus i j k y&:=(a_i\vee a_j)\wedge \Bigl(\bigl( (x\vee a_k)\wedge (c_{i,k}\vee a_j)\bigr) \vee \bpf i j k j(y) \Bigr),
\label{eq:hGkfmVtLvKtlc}\\
x\bszor i j k y&:=(a_i\vee a_j)\wedge \Bigl(\bpf i j i k(x) \vee \bpf i j k j(y) \Bigr),\text{ and}
\label{eq:hGkfmVtLvKtld}\\
x\bminus i j k y&:=(a_i\vee a_j)\wedge 
\Bigl(  a_k\vee \bigl( (c_{j,k}\vee x)\wedge(a_j\vee \bpf i j i k(y))\bigr) \Bigr);
\label{eq:hsTrnbNsRlZlpN}
\end{align}
they are in $\coring i j$ and do not depend on $k$.
Except that the lattice polynomials defined in \eqref{eq:hGkfmVtLvKtlc}, \eqref{eq:hGkfmVtLvKtld}, and \eqref{eq:hsTrnbNsRlZlpN} as well as the projections defined in \eqref{eq:bprkTsd} and \eqref{eq:bpHrd} are 
\begin{equation}
\text{built from }\vee,\ \wedge,\text{ and the components of }\vec f,
\label{eq:lNgtZmkrkm}
\end{equation}
their details are not relevant here, and there are other ways to define appropriate $\oplus$, $\otimes$, and $\ominus$. In fact,
as Herrmann \cite[2 lines after Theorem 2.2]{herrmann84} notes, Neumann used the opposite of $\bszor i j k$. Fortunately, what we need from von Neumann's voluminous \cite{bookvonneumann}, has already been summarized in Herrmann \cite[Theorem 2.2]{herrmann84}, in Section 2 of Cz\'edli and Skublics, and (partially) in Freese \cite[Page 284]{freese}. Furthermore, the isomorphism given in \eqref{eq:mtzTbnkStmlDsl} allows us to pass from $\Sub{\vecspace F d}$ to $\Sub\psd$.
So,  based on \eqref{eq:coring}--\eqref{eq:hsTrnbNsRlZlpN}, we can recall the following theorem.

\begin{theorem}[von Neumann \cite{bookvonneumann} for $3\leq d\in\Nplu$ and Day and Pickering \cite{daypick} for $d=3$]\label{thm:kgSzsRtlBkkmHdkg} For $i,j\in[d]$ distinct, the operations defined in \eqref{eq:hGkfmVtLvKtlc}, \eqref{eq:hGkfmVtLvKtld}, and \eqref{eq:hsTrnbNsRlZlpN} in $L=\Sub\psd$ do not depend on $k\in[d]\setminus \set{i,j}$, and 
\[R(i,j)=\bigl(R(i,j);\bplus i j k, \bminus i j k, \bszor i j k\bigr)
\]
is a ring, called the $(i,j)$-th \emph{coordinate ring}, for each $k\in[d]\setminus \set{i,j}$. 
The  map $\delta_{d,1}\colon F\to\coring d 1$ defined by $\delta_{d,1}(r):=[r,0,\dots,0,-1]$  is a ring isomorphism (and so it is a field isomorphism). 
So is the map $\delta_{i,j}\colon F\to \coring i j$ defined by 
\begin{equation}
\delta_{i,j}(r):=[0,\dots,0,r,0,\dots,0,-1,0,\dots,0]\in \Sub\psd
\label{eq:tmBtmBHltBhBSkSwRs}
\end{equation}
with $r$ at the $j$-th position and $-1$ at the $i$-th position. Thus, the coordinate rings $\coring i j$,  $i\neq j\in[d]$, are all isomorphic to the field $F$. The elements $a_i$ and $c_{j,i}$ are the zero and the unit of $\coring i j$.
The ring isomorphisms given in \eqref{eq:tmBtmBHltBhBSkSwRs} commute\footnote{We compose maps from right to left; e.g., $(\alpha\beta)(x)=\alpha(\beta(x))$.} with the projectivities defined in \eqref{eq:bprkTsd} and \eqref{eq:bpHrd}, respectively.
 That is, 
for any $p,q,r\in[d]$ such that $|\{p,q,r\}|=3$, 
\begin{equation}
\bpf p q r q \circ \delta_{p,q} =\delta_{r,q}\ \text{ and }\ 
\bpf p q p r \circ \delta_{p,q} =\delta_{p,r}.
\label{eq:wtlVdRcpBrCrM}
\end{equation}
Furthermore,  using the superscript ${}^{\text{rest}}$ to denote the restrictions of the projectivities occurring in \eqref{eq:wtlVdRcpBrCrM} to $\coring p q$, 
\begin{gather}
\bpf p q r q^{\text{rest}}\colon\coring p q\to \coring r q \text{ is a ring isomorphism,}
\label{eq:sjNKbrhrNaMgLjr}\\
\text{so is } \bpf p q p r^{\text{rest}}\colon\coring p q\to \coring p r,
\label{eq:sjNKbrhbtlMgLjr}
\end{gather}
and \eqref{eq:wtlVdRcpBrCrM} remains true if we change the projections in it to their restrictions to $\coring p q$. 
\end{theorem}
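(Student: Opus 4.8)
The plan is to exploit the concreteness of $\psd=\psd(F)$: unlike the abstract coordinatization of Arguesian lattices, here every point carries explicit homogeneous coordinates, every join is a span and every meet is an intersection of subspaces of $V=\vecspace F d$, so the whole theorem reduces to bookkeeping in elementary Linear Algebra. First I would identify $\coring i j$ geometrically. By \eqref{eq:coring}, a point $x$ lies in $\coring i j$ exactly when it is a point of the line $a_i\vee a_j=Fv_i+Fv_j$ distinct from $a_j$; such a point is $F(rv_j-v_i)$ for a unique $r\in F$ (the excluded ``value $r=\infty$'' being $Fv_j=a_j$). This is precisely $\delta_{i,j}(r)$ of \eqref{eq:tmBtmBHltBhBSkSwRs}, so $\delta_{i,j}\colon F\to\coring i j$ is a bijection with $\delta_{i,j}(0)=a_i$ and $\delta_{i,j}(1)=c_{j,i}$, which already settles bijectivity and the claims about the zero and the unit.

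Next I would compute the two projectivities \eqref{eq:bprkTsd}--\eqref{eq:bpHrd} on coordinates, which simultaneously proves \eqref{eq:wtlVdRcpBrCrM}. For $\bpf p q r q$, one spans $\delta_{p,q}(\alpha)=F(\alpha v_q-v_p)$ with the center $c_{p,r}=F(v_p-v_r)$ and intersects with $a_r\vee a_q=Fv_r+Fv_q$; forcing the $v_p$-component of a generic element of the span to vanish pins the intersection down to $F(\alpha v_q-v_r)=\delta_{r,q}(\alpha)$, i.e.\ $\bpf p q r q\circ\delta_{p,q}=\delta_{r,q}$, and symmetrically $\bpf p q p r\circ\delta_{p,q}=\delta_{p,r}$. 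In particular these perspectivities transport the coordinate $\alpha$ unchanged from one line to another.

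With these preliminaries the core step is to substitute $x=\delta_{i,j}(r)$ and $y=\delta_{i,j}(s)$ into \eqref{eq:hGkfmVtLvKtlc}, \eqref{eq:hGkfmVtLvKtld}, \eqref{eq:hsTrnbNsRlZlpN}, evaluate each nested meet and join as an intersection or span, and read off the coordinate of the output. I expect to obtain $\delta_{i,j}(r)\bplus i j k\delta_{i,j}(s)=\delta_{i,j}(r+s)$, $\delta_{i,j}(r)\bszor i j k\delta_{i,j}(s)=\delta_{i,j}(rs)$, and $\delta_{i,j}(r)\bminus i j k\delta_{i,j}(s)=\delta_{i,j}(r-s)$; this is the classical von Staudt ``algebra of throws'' realized in coordinates. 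Three consequences then follow at once. Since the computed coordinates $r+s$, $rs$, $r-s$ contain no reference to $k$, the operations are independent of $k\in[d]\setminus\set{i,j}$. Since $\delta_{i,j}$ is a bijection intertwining the lattice-defined operations with the field operations of $F$, it transports the field (hence ring) structure, so $\coring i j$ is a ring and $\delta_{i,j}$ a ring isomorphism onto it, whence all the $\coring i j$ are mutually isomorphic to $F$. Finally \eqref{eq:sjNKbrhrNaMgLjr}--\eqref{eq:sjNKbrhbtlMgLjr} drop out, because by \eqref{eq:wtlVdRcpBrCrM} the restricted perspectivity equals $\delta_{r,q}\circ\delta_{p,q}^{-1}$ (resp.\ $\delta_{p,r}\circ\delta_{p,q}^{-1}$), a composite of ring isomorphisms.

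The main obstacle is the core computation for the three operations: each polynomial routes its inputs through the auxiliary points $a_k$, $c_{i,k}$, $c_{j,k}$ and through the two perspectivities above, so one must track carefully the span or intersection produced by every intermediate meet and join and check that the stray components cancel to leave exactly $r+s$, $rs$, and $r-s$. The multiplication $\bszor i j k$ is the delicate case, since it composes the two perspectivities of the second paragraph before intersecting with $a_i\vee a_j$; here the commutativity of $F$ is what makes the outcome symmetric and, as the text's remark records, what makes von Neumann's convention and its opposite agree. Everything else is routine substitution, so once the verification is carried out for one operation the pattern renders the remaining two mechanical.
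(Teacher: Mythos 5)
Your proposal is correct, and the computations it defers all go through: writing $\delta_{i,j}(r)=F(rv_j-v_i)$, a few lines of linear algebra each show that $\bpf p q r q$ and $\bpf p q p r$ carry $\delta_{p,q}(\alpha)$ to $\delta_{r,q}(\alpha)$ and to $\delta_{p,r}(\alpha)$, and that the polynomials \eqref{eq:hGkfmVtLvKtlc}, \eqref{eq:hGkfmVtLvKtld}, \eqref{eq:hsTrnbNsRlZlpN} send $(\delta_{i,j}(r),\delta_{i,j}(s))$ to $\delta_{i,j}(r+s)$, $\delta_{i,j}(rs)$, and $\delta_{i,j}(r-s)$; independence of $k$, the ring axioms, the identification of $a_i$ and $c_{j,i}$ as zero and unit, and \eqref{eq:sjNKbrhrNaMgLjr}--\eqref{eq:sjNKbrhbtlMgLjr} then all follow by transport of structure along the bijection $\delta_{i,j}$, exactly as you argue. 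Your route differs from the paper's only in that the paper gives no proof at all: Theorem \ref{thm:kgSzsRtlBkkmHdkg} is \emph{recalled} from von Neumann, Day and Pickering, and the summaries in Herrmann and in Cz\'edli and Skublics. Your direct coordinate verification is precisely the elementary alternative the paper advertises when it says the statements of that section ``are straightforward to verify with elementary computations in Linear Algebra,'' and it is the same technique the paper actually deploys when it must prove something new of this kind, namely Lemma \ref{lemma:RcPrkWvk} on reciprocals via Observation \ref{obs:nnRdRsfcmJntH}. What the citation buys is brevity and generality (von Neumann's theory coordinatizes abstract modular lattices equipped with a frame, far more than $\Sub\psd$ requires); what your computation buys is self-containedness and a transparent view of where the commutativity of $F$ enters, namely in making $\bszor i j k$ coincide with its opposite. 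The only point worth making explicit in a written-out version is that $\delta_{i,j}$ is onto $\coring i j$ (every complement of $a_j$ in $[0,a_i\vee a_j]$ is $F(rv_j-v_i)$ for a unique $r$), which you do state; with that, the outputs of the three operations automatically land in $\coring i j$ and no separate well-definedness check is needed.
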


\section{Proving Theorem \ref{thmnok}}\label{sect:aproof}

A \emph{generating vector} of a lattice $L$ is a vector $\vec b=(b_1,\dots,b_s)$ of not necessarily distinct elements of $L$ such that $\set{b_1,\dots,b_s}$  generates  $L$.

\begin{proof}[Proof of Observation \ref{obs:hbNvlnlgnD}]
We argue by way of contradiction. Suppose that $k$ is large enough in the given sense but $L^k$ has an $n$-dimensional generating vector 
$(\uid b 1$, \dots, $\uid b n)$. For $i\in [k]$, let $\pi_i\colon L^k\to L$ denote the $i$-th projection defined by $\vec x\mapsto x_i$. Let $\uid g i:=(\pi_i(\uid b 1),\dots,\pi_i(\uid b n)\in L^n$. 
As $k$ is large, there are $i,j\in [k]$ such that $i\neq j$ and $\uid g i \leq \uid g j$, understood componentwise. Then for any $n$-ary lattice term $f$, we have that 
\begin{equation}
\begin{gathered}
\pi_i\bigl(f(\uid b 1,\dots, \uid b n)\bigr)  =  
f\bigl(\pi_i(\uid b 1),\dots, \pi_i(\uid b n)\bigr) =   
f(\uid g i)\cr
\leq f(\uid g j) =
f\bigl(\pi_j(\uid b 1),\dots, \pi_j(\uid b n)\bigr) =\pi_j\bigl(f(\uid b 1,\dots, \uid b n)\bigr).
\end{gathered}
\label{eq:mMrdJklBtnZnm}
\end{equation}
As $(\uid b 1$, \dots, $\uid b n)$ is a generating vector, \eqref{eq:mMrdJklBtnZnm} implies that $\pi_i(\vec x)\leq \pi_j(\vec x)$ for every $\vec x\in L^k$, which is a contradiction completing the proof.
\end{proof}

\begin{proof}[Proof of Lemma \ref{lemma:bgzsLm}]
Since $V'\subseteq V$, \eqref{eq:mMtlnZknZltcwrTb} makes sense.
For a subset $H\subseteq  V'$, 
since the operation of spanning is order-preserving and idempotent,
\begin{equation*}
\fvgener F H\subseteq \fvgener F {\fvgener P H} \subseteq \fvgener F {\fvgener F H}=\fvgener F H
\end{equation*}
and $\fvgener F {\fvgener P H}=\phi(\fvgener P H)$ imply the last sentence of the lemma. 

Let $X$ be a subspace of $V'$, denote its dimension by $t$, and take a maximal subset $U:=\set{\uid a 1,\dots,\uid a t}$ of linearly independent vectors in $X$. Then, for $i\in[t]$, $\uid a i$ is of the form $\uid a i=(u_{i,1},\dots, u_{i,d})$ with entries from $P$, and the rank of the matrix $A:=(u_{i,j})_{t\times d}$ is $t$. As $U$ generates (in other words, linearly spans) $X$ in $V'$, the last sentence of the lemma gives that $Y:=\fvgener F U$ equals $\phi(X)$. The rank $t$ of $A$ is captured by determinants,  so it remains $t$ when we pass from $P$ to $F$. Hence,  $\phi(X)=Y$ is also of dimension $t$. 
Since both $V'$ and $V$ are of the same finite dimension $d$, it follows that $\phi$ is cover-preserving, $\phi(0)=0$, and $\phi(1)=1$. Denote the join in $\Subp P {V'}$ and that in  $\Subp F V$ by $\vee'$ and $\vee$, respectively. For $X,Y\in V'$, the last sentence of the lemma allows us to compute as follows:
\begin{align*}
\phi(X\vee' Y)&=\phi(\fvgener P{X\cup Y})= \fvgener F{X\cup Y}\cr
&=\fvgener F{\fvgener F X\cup \fvgener F Y} \cr
&=\fvgener F{\phi(X)\cup \phi(Y)}=\phi(X)\vee\phi(Y).
\end{align*} 
Thus, $\phi$ is a join-homomorphism.
We claim that if $X,Y\in \Subp P {V'}$ such that $\phi(X)\leq \phi(Y)$, then $X\leq Y$. Suppose the contrary, that is, $\phi(X)\leq \phi(Y)$ but $X\nleq Y$. Then $Y<X\vee'Y$ and  
$\phi(Y)=\phi(X)\vee \phi(Y)=\phi(X\vee' Y)$ together contradict  the fact that $\phi$ is dimension-preserving. Therefore, $X\leq Y\iff \phi(X)\leq Y$, that is, $\phi$ is an order-embedding. We know from Lemma 1 of Wild \cite{wild} that every cover-preserving order embedding between two lower semimodular lattices is a meet-embedding. Therefore, since subspace lattices are lower semimodular (in fact, they are even modular), we obtain that $\phi$ preserves the  meets. Thus, $\phi$ is a lattice embedding, completing the proof of Lemma \ref{lemma:bgzsLm}
\end{proof}

The following observation is trivial by definitions.

\begin{observation}\label{obs:nnRdRsfcmJntH} 
Let $F$ be a field, $3\leq d\in\Nplu$, and let
$\iud 1 u=[\nuid u 1_1,$ \dots, $\nuid u 1_d]$, \dots, $\iud k u=[\nuid u k_1$, \dots, $\nuid u k_d]$ be points in $\psd(F)$; according to our convention, we assume that $\{\nuid u 1_d,\dots,\nuid u k_d\}\subseteq\set{0, -1}$. 
Then a point $\vec v=[v_1,\dots, v_d]\in\psd(F)$, with $v_d\in\{0,1\}$ again, belongs to the subspace generated (in other words, spanned) by $\{\iud 1 u,\dots, \iud k u\}$ if and only if there exist $\lambda_1,\dots, \lambda_k\in F$ such that 
\begin{equation}
v_i=\sum_{j\in [k]} \lambda_j\nuid u j_i\ \text{ for }i\in[d].
\label{eq:nkRkmznMdtstlDbnGFNr}
\end{equation}
If $\vec v$ is a finite point, that is, if $v_d=-1$, then \eqref{eq:nkRkmznMdtstlDbnGFNr} implies that 
  $\Theta:=\{i:\nuid u i_d=-1\}\neq\emptyset$ and  $\sum_{i\in \Theta}\lambda_i=1$. 
If $\vec v$ and all the $\uid u {i}$, $i\in [k]$, are finite points, then \eqref{eq:nkRkmznMdtstlDbnGFNr} means that  $\vec v$ 
is a so-called \emph{affine combinations} of $\iud 1 u$, \dots, $\iud k u$, that is, $\sum_{i\in [k]}\lambda_i=1$.
\end{observation}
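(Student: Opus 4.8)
The plan is to transport the entire statement to elementary linear algebra in $V=\vecspace F d$ through the isomorphism $\eta$ of \eqref{eq:mtzTbnkStmlDsl}. Under $\eta$, each projective point $\iud j u$ is the $1$-dimensional subspace $F\cdot(\nuid u j_1,\dots,\nuid u j_d)$ of $V$, and the subspace of $\psd$ spanned by $\set{\iud 1 u,\dots,\iud k u}$ corresponds to the linear span $W:=\fvgener F{\set{(\nuid u j_1,\dots,\nuid u j_d):j\in[k]}}$. Hence $\vec v$ lies in the generated subspace if and only if $F\cdot(v_1,\dots,v_d)\subseteq W$, and, since a $1$-dimensional subspace is contained in $W$ exactly when one (equivalently, each) of its nonzero vectors lies in $W$, this holds if and only if $(v_1,\dots,v_d)$ is a linear combination of the vectors $(\nuid u j_1,\dots,\nuid u j_d)$. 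Spelling out this vector identity coordinate by coordinate is precisely \eqref{eq:nkRkmznMdtstlDbnGFNr}, which proves the first assertion.

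The one thing to keep an eye on is that the tuples appearing in \eqref{eq:nkRkmznMdtstlDbnGFNr} are the normalized representatives fixed by our convention (last entry in $\set{0,-1}$), whereas containment in $W$ is a property of $1$-dimensional subspaces and is insensitive to the choice of representative. This is harmless: in one direction, if $(v_1,\dots,v_d)\in W$ then so is every scalar multiple of it, and in the other direction the coefficients $\lambda_1,\dots,\lambda_k$ furnished by \eqref{eq:nkRkmznMdtstlDbnGFNr} exhibit the normalized representative itself as a member of $W$. Thus the $\set{0,-1}$ normalization of $\vec v$ and of the $\iud j u$ may be carried along throughout without any loss.

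For the two refinements I would simply read off the $d$-th coordinate of \eqref{eq:nkRkmznMdtstlDbnGFNr}. If $\vec v$ is finite, then $v_d=-1$, and since each $\nuid u j_d\in\set{0,-1}$ we obtain $-1=\sum_{j\in[k]}\lambda_j\nuid u j_d=-\sum_{i\in\Theta}\lambda_i$ with $\Theta=\set{i:\nuid u i_d=-1}$; therefore $\sum_{i\in\Theta}\lambda_i=1$, and since an empty sum is $0\neq 1$, this also forces $\Theta\neq\emptyset$. If in addition every $\iud i u$ is finite, then $\Theta=[k]$ and the same identity reads $\sum_{i\in[k]}\lambda_i=1$, i.e.\ $\vec v$ is an affine combination of the $\iud j u$. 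The whole argument is a direct unwinding of the definitions, so I expect no genuine obstacle; the only matter requiring attention is keeping the $\set{0,-1}$ normalization consistent across $\vec v$ and the $\iud j u$, exactly as arranged in the previous step.
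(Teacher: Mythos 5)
Your proof is correct and does exactly what the paper intends: the paper offers no argument at all, merely declaring the observation ``trivial by definitions,'' and your write-up is the straightforward unwinding of those definitions (passing through $\eta$ to linear spans in $\vecspace F d$, handling the choice of normalized representatives, and reading off the $d$-th coordinate for the statements about $\Theta$ and affine combinations). You also, correctly, treat the finiteness condition as $v_d=-1$ in accordance with the $\set{0,-1}$ convention, despite the statement's stray ``$v_d\in\set{0,1}$.''
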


As $\coring d 1\cong F$ is a field, it is natural that we need the (partial) unary operation of forming reciprocals. By passing from   Huhn diamonds, see Huhn \cite{huhn}, to our setting based on (von Neumann) frames, such a unary operation could be derived from any of the two division operations given at the bottom of Page 510 in Day and Pickering \cite{daypick}.
However, while \cite{daypick} deals with a more general class of modular lattices, we need this unary operation only in the simple situation where our lattice is of the form $\Sub\psd$ and  $\coring d 1$ is determined by the canonical frame. Hence, and also because some details will be useful later, we define such a unary operation directly.  Namely, for $i,j,k\in[d]$ pairwise distinct and $x\in \Sub\psd$, we define
\begin{align}
\recip i j k(x):=  \Bigl(\Bigl(\Bigl(\bigl((x\vee c_{k,i})\wedge(a_j\vee a_k)\bigr)\vee c_{j,i}\Bigr)\,\Bigr)\cr
\wedge (a_k\vee a_i)\vee c_{k,j}\Bigr) 
\wedge(a_i\vee a_j)\in  \coring i j.
\label{eq:vzvmgBrnplF}
\end{align}

\begin{figure}[ht] 
\centerline{ \includegraphics[scale=1.0]{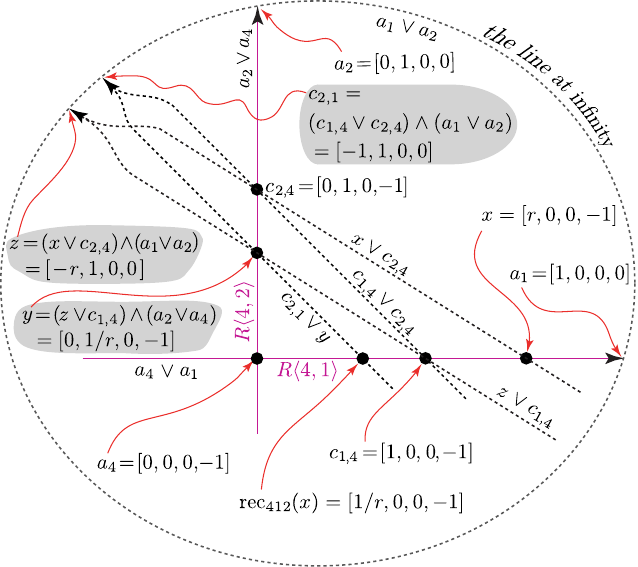}} \caption{Computing reciprocals}\label{figinv}
\end{figure}

\begin{lemma}\label{lemma:RcPrkWvk}
If $F$ is a field, $3\leq d\in\Nplu$, and $x\in \coring i j\subseteq \Sub\psd$ such that $x\neq a_i$, then $\recip i j k(x)$ is the reciprocal of $x$ in $\coring i j$, that is, $x\bszor i j k \recip i j k(x)=c_{j,i}$. Furthermore, $\recip i j k(a_i)=a_j$ and \eqref{eq:lNgtZmkrkm} is valid for \eqref{eq:vzvmgBrnplF}, too. (Note that $a_j\notin\coring i j$ and, 
by Theorem \ref{thm:kgSzsRtlBkkmHdkg}, $a_i$ and $c_{j,i}$ are the zero $0_{\coring i j}$ and the unit $1_{\coring i j}$ in $\coring i j$, respectively.)
\end{lemma}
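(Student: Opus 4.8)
The plan is to reduce this lattice-theoretic statement to a one-line identity in the field $F$. Since $\delta_{i,j}\colon F\to\coring i j$ is a ring isomorphism by Theorem \ref{thm:kgSzsRtlBkkmHdkg}, I can write $x=\delta_{i,j}(r)$ for a unique $r\in F$, and the hypothesis $x\neq a_i$ says exactly that $r\neq 0$, because $a_i=\delta_{i,j}(0)$ is the zero of the coordinate ring. I would then prove the sharper statement that $\recip i j k(x)=\delta_{i,j}(r^{-1})$ when $r\neq 0$, together with $\recip i j k(a_i)=a_j$. Granting the first identity, the asserted equation is immediate: by Theorem \ref{thm:kgSzsRtlBkkmHdkg} the multiplication $\bszor i j k$ is independent of $k$ and $\delta_{i,j}$ respects it, so $x\bszor i j k\recip i j k(x)=\delta_{i,j}(r)\bszor i j k\delta_{i,j}(r^{-1})=\delta_{i,j}(r\cdot r^{-1})=\delta_{i,j}(1)=c_{j,i}$, the unit of $\coring i j$. (As a byproduct, uniqueness of reciprocals in the field $\coring i j$ then shows a posteriori that $\recip i j k(x)$ does not depend on $k$ for $x\neq a_i$.)

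Next I would cut the dimension down to the smallest interesting case. Every operand occurring in \eqref{eq:vzvmgBrnplF} --- namely $x=\delta_{i,j}(r)$, the frame components $c_{k,i},c_{j,i},c_{k,j}$, and the points $a_i,a_j,a_k$ --- is a point whose coordinates vanish outside the three positions $i,j,k$. Hence each lies in the projective plane $\pi:=a_i\vee a_j\vee a_k$, and since $[0,\pi]$ is an interval of $\Sub\psd$, every join and meet in \eqref{eq:vzvmgBrnplF} is computed inside the sublattice $[0,\pi]\cong\Sub{\psp 2(F)}$. After relabelling I may therefore assume $d=3$ and $(i,j,k)=(1,2,3)$, so that the entire calculation takes place in the projective plane $\psp 2(F)$.

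What remains is a bookkeeping exercise: evaluating \eqref{eq:vzvmgBrnplF} from the inside out, each join of two points being the line through them and each meet of two lines their intersection point, exactly as licensed by Observation \ref{obs:nnRdRsfcmJntH}. With $x=[-1,r,0]$, $c_{3,1}=[-1,0,1]$, $c_{2,1}=[-1,1,0]$, $c_{3,2}=[0,-1,1]$, and $a_1,a_2,a_3$ the standard points, the alternating chain of six joins and meets produces successively the points $[0,-r,1]$, then $[-r,0,1]$, and finally $[-1,r^{-1},0]=\delta_{1,2}(r^{-1})$, as wanted; running the same chain on $x=a_1$ collapses to $a_3$, then $a_3$ again, and finally to $a_2=a_j$, which settles the boundary case. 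The remaining assertion, that \eqref{eq:lNgtZmkrkm} holds for \eqref{eq:vzvmgBrnplF}, is then automatic, since that formula is literally a lattice term in $\vee$, $\wedge$, and the components of $\vec f$. I expect the only delicate point to be the routine but error-prone tracking of the alternating joins and meets, and the rescaling needed to read each intermediate projective point off in its canonical $(-1)$-normalized form; the reduction to the plane $\psp 2(F)$ is precisely what keeps this computation short and checkable by hand.
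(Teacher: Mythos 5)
Your proposal is correct and follows essentially the same route as the paper: both write $x=\delta_{i,j}(r)$ with $r\neq 0$, reduce the claim to the sharper identity $\recip i j k(x)=\delta_{i,j}(r^{-1})$ via the ring isomorphism of Theorem \ref{thm:kgSzsRtlBkkmHdkg}, and then evaluate the alternating chain of joins and meets in homogeneous coordinates using Observation \ref{obs:nnRdRsfcmJntH} (the paper does this for the representative case $(d,i,j,k)=(4,4,1,2)$ guided by Figure \ref{figinv}). Your explicit reduction to the interval $[0,a_i\vee a_j\vee a_k]\cong\Sub{\psp 2(F)}$ merely formalizes the paper's remark that one choice of parameters ``reflects the general case,'' and your intermediate points $[0,-r,1]$, $[-r,0,1]$, $[-1,r^{-1},0]$ and the boundary computation for $x=a_i$ check out.
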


\begin{proof} We deal only with  $(d,i,j,k)=(4,4,1,2)$, which reflects the general case.  The proof is given by Figure \ref{figinv}.  To exemplify how this figure determines an easy formal argument in a straightforward way, we present only the following details; similar details from other proofs will be omitted.
By Theorem \ref{thm:kgSzsRtlBkkmHdkg}, $x=\delta_{i,j}(r)=\delta_{4,1}(r)=[r,0,0,-1]$ for some $r\in F\setminus \set 0$, and  
it suffices to show that $\recip 4 1 2 (x)=\delta_{4,1}(1/r)$, that is,  $\recip 4 1 2 (x)= [1/r,0,0,-1]$.
With 
$z:=(x\vee c_{2,4})\wedge(a_1\vee a_2)$ and 
$y:=(z\vee c_{1,4})\wedge (a_2\vee a_4)$, we have that
$\recip 4 1 2(x)=(y\vee c_{2,1})\wedge (a_4\vee a_1)$.
Assuming that $z=[-r,1,0,0]$ is already known, we proceed to the next computation step. Namely,  we verify that $y$ is correctly given in the figure. 
Using $c_{1,4}=[1,0,0,-1]$, $c_{2,4}=[0,1,0,-1]$, $a_1=[1,0,0,0]$, $a_2=[0,1,0,0]$, and $a_4=[0,0,0,-1]$ from \eqref{eq:tdrmBchSnnRmNa}--\eqref{eq:tdrmBchSnnRmNb},
Observation \ref{obs:nnRdRsfcmJntH} implies 
that a point $P$ is  in $z\vee c_{1,4}$ if and only if it is of the form $[-\beta_1 r+\beta_2,\beta_1,0,-\beta_2]$ 
such that $\beta_1\in F$,  $\beta_2\in\set{0,1}$, and $(\beta_1,\beta_2)\neq(0,0)$. Similarly, $P$ is in $a_2\vee a_4$ if and only if it is of the form $[0,\lambda_1,0,-\lambda_2]$ such that  $\lambda_1 \in F$, $\lambda_2\in\set{0,1}$, and $(\lambda_1,\lambda_2)\neq(0,0)$. Comparing the two forms, we have that $\beta_1=\lambda_1$, $\beta_2=\lambda_2$, and $-\beta_1 r+\beta_2=0$. By the last equality and $r\neq 0$, we  have that $\beta_1\neq 0 \iff \beta_2\neq 0$. 
So $(\beta_1,\beta_2\neq (0,0)$ and $\beta_2\in\set{0,1}$ give that $\beta_2=1$. Hence,  $-\beta_1 r+\beta_2=0$ implies that $\beta_1=1/r$, and so $P=[0,1/r,0,-1]$.  This computation verifies the equality $y=[0,1/r,0,-1]$, confirming the figure. 
\end{proof}

\begin{proof}[Proof of Theorem \ref{thmnok}] In virtue of the isomorphism given in \eqref{eq:mtzTbnkStmlDsl}, we can assume that $L=\Sub{\psd(F)}=\Sub\psd$. Denoting the prime field of $F$ by $P$, let $L'=\Sub{\psd(P)}$. Let $\pvec f$ and $\vec f$ be the canonical frames in $L'$ and $L$ according to \eqref{eq:tdrmBchSnnRmNa}--\eqref{eq:tdrmBchSnnRmNb}, respectively. The isomorphism given in \eqref{eq:mtzTbnkStmlDsl} depends on the underlying field, this is why the next sentence indicates the corresponding fields in the subscripts. 
It follows from Lemma \ref{lemma:bgzsLm} and \eqref{eq:mtzTbnkStmlDsl} that for the composite map $\phi':=  \eta_F\circ\phi\circ\eta_P^{-1}$, we have that 
\begin{gather}
\phi'\colon L'\to L\text{ is a 0-, 1-, and cover-preserving  lattice embedding}
\label{eq:fktPtpTkfKtkVKpna}
\\
\text{and }\phi'(\pvec f)=\vec f,\text{ understood componentwise.}
\label{eq:fktPtpTkfKtkVKpnb}
\end{gather}
First, we deal with  the second inequality in \eqref{eq:nKmZfRtdrCh}. 
As $P$ is a prime field, we know from Gelfand and Ponomarev's result (see also lines 2--3 of page 494 in Z\'adori \cite{zadori2} or Section \ref{sect:append-ZLprfn} here)  that $L'$ is 4-generated. Pick a 4-dimensional generating vector $\vec g\ppr =(g'_1,g'_2,g'_3,g'_4)$ of $L'$ and, with $\phi'$ from \eqref{eq:fktPtpTkfKtkVKpna}, let
\begin{equation}
g_i:=\phi'(g'_i)\text{ for }i\in [4]; \text{ so }\phi'(\vec g\ppr)=(g_1,\dots,g_4).
\label{eq:ktrMfjJlLllnskmTtnJv}
\end{equation} 
Denote by $M$ and $m$ the denominator and the second summand occurring in \eqref{eq:nKmZfRtdrCh}, respectively. 
So $M=\lint{d^2/4}$ and $m=\uint{t/M}$.
Since $mM\geq t=\mng F$, there exist not necessarily distinct elements $r_{i,j}\in F\setminus\set 0$,  $i\in[m]$ and $j\in[M]$, such that $\{r_{i,j}: i\in[m]$ and $j\in [M]\}$ generates $F$ as a field.
Consider the following $\lint{d/2}$-by-$d$  ``pattern matrix''
\begin{equation}
A:=\begin{pmatrix}
\forall&0&0&\dots&0& 0& \forall& \dots &\forall& -1\cr
0&\forall&0&\dots&0& 0& \forall& \dots &\forall& -1\cr
0&0&\forall&\dots&0& 0& \forall& \dots &\forall& -1\cr
\vdots&\vdots&\vdots&\ddots&\vdots& \vdots& \vdots& \dots &\vdots& -1\cr
0&0&0&\dots&\forall& 0& \forall& \dots &\forall& -1\cr
0&0&0&\dots&0& \forall& \forall& \dots &\forall& -1\cr
\end{pmatrix}.
\label{eq:krBthmFpBltMJrnkTlZg}
\end{equation}
Using \eqref{eq:nNzImdrHlTrmLj}, we obtain that  $A$ contains exactly $M$  universal quantifiers.
For $i\in [m]$, we obtain a ``real matrix'' $A(i)$ from  $A$ by changing the universals quantifiers to $r_{i,1}$, \dots, $r_{i,M}$.  So each of the  $r_{i,1}$, \dots, $r_{i,M}$ occurs in $A(i)$ exactly once and it occurs at a place where $A$ contains a universal quantifier.
Each row of $A(i)$ consists of the coordinates of a finite point of $\psd=\psd(F)$; let $\uid u {i,1}$, \dots, $\uid u {i,\lint{d/2}}$ be the finite points corresponding to the rows of $A(i)$ in this way. 
For example, $r_{i,1}$, \dots, $r_{\uint{d/2}}$
are substituted into the first row of the pattern matrix to obtain the first row of $A(i)$ and so 
\begin{equation}
\uid u {i,1}=[r_{i,1},0,0,\dots,0,0,r_{i,2},\dots,r_{\uint{d/2}},-1].
\label{eq:hmVdJzBrHLsGJl}
\end{equation}
We often refer to the rows of $A(i)$ as points of $\psd$. For $i\in[m]$, let 
\begin{equation}
g_{4+i}\text{ be the subspace of }\psd\text{ spanned by }
\set{\uid u {i,1},\dots, \uid u {i,\lint{d/2}}}.
\label{eq:rstStmjSNmpnThLl}
\end{equation}
In other words, $g_{4+i}$ is the subspace of $\psd$ spanned by the rows of $A(i)$. 
For a subset $X$ of $L$, let $\latgen X$ denote the sublattice of $L$ that $X$ generates; we shorten $\latgen{\set{x_1,\dots,x_n}}$ to  $\latgen{x_1,\dots,x_n}$. Having \eqref{eq:ktrMfjJlLllnskmTtnJv} and \eqref{eq:rstStmjSNmpnThLl},
we claim that  $\vec g:=(g_1, g_2,\dots,g_{4+m})$
 is a generating vector of $L$,  that is, 
\begin{equation}
\text{letting }S_0:=\latgen{g_1, g_2, \dots,g_{4+m}},\text{ we claim that }S_0=L.
\label{eq:mRdszmrztmkslBbnt}
\end{equation}
Since  $\set{g_1,\dots, g_4}$ generates $\phi'(L')$, we have that $\phi'(L')\subseteq S_0$. Thus, with reference to \eqref{eq:sLskPnspTfXskQv}, \eqref{eq:tdrmBchSnnRmNa},  \eqref{eq:tdrmBchSnnRmNb}, and \eqref{eq:fktPtpTkfKtkVKpnb}, we have that
\begin{equation}
\text{the components of }\vec f\text{ are in }\latgen{g_1,\dots,g_4}\subseteq S_0. 
\label{eq:fjBlRkhrgBtgFRc}
\end{equation}
Let\footnote{For \emph{this} proof, working with $S_0$ would be sufficient. We introduce $S_1$ and later $S$, because $S$ will be referenced in Section \ref{sect:bproof}.} 
\begin{equation}
S_1:=\latgen{\set{g_{5},\dots,g_{4+m}}\cup\set{\text{the components of }\vec f\,}}.
\label{eq:zbzGfSblZNtjs}
\end{equation}
As it is clear from \eqref{eq:fjBlRkhrgBtgFRc}, 
to prove \eqref{eq:mRdszmrztmkslBbnt}, it suffices to show that $S_1$ equals $L$. As a first but a long step, we show that 
\begin{equation}
\coring d 1\subseteq S_1. 
\label{eq:fcNgrmgdRkksrmmsT}
\end{equation}
First we show that for all $i\in [m]$,   
\begin{equation}
\text{every row of }A(i) \text{, as a point of }\psd\text{ and an atom of  } L \text{, is in }S_1.
\label{eq:mnTljtntKzRPsRp}
\end{equation} 
By symmetry, it suffices to show that $\uid u{i,1}$ from \eqref{eq:hmVdJzBrHLsGJl} is in $S_1$. By \eqref{eq:zbzGfSblZNtjs},
\begin{equation}
a_1\vee a_{\lint{d/2}+1}\vee a_{\lint{d/2}+2}\vee \dots \vee \vee a_{d-1}\vee a_d\in S_1.
\label{eq:flKszKnPtcPrnpT}
\end{equation}
Observation \ref{obs:nnRdRsfcmJntH}, \eqref{eq:tdrmBchSnnRmNa}, and \eqref{eq:tdrmBchSnnRmNb} imply that the subspace in \eqref{eq:flKszKnPtcPrnpT} consists of the points of the form
$[x_1,0\dots,0,x_{\lint{d/2}+1},\dots ,x_d]$ where the components are in $F$, not all of them is 0,  and $x_d\in\set{0,-1}$. So when we form the meet of $g_{4+i}$ and the subspace \eqref{eq:flKszKnPtcPrnpT}, then 
the fact  that none of the  $r_{i,j}$'s in (the ``diagonal part'' of) $A(i)$ is $0$  and  Observation \ref{obs:nnRdRsfcmJntH} imply that this meet is  $\uid u {i,1}$. So $\uid u {i,1}\in S_1$, proving \eqref{eq:mnTljtntKzRPsRp}.

Next, we show that for all $(i,j)\in[m]\times [M]$, 
\begin{equation}
\delta_{d,1}(r_{i,j})=[r_{i,j},0,\dots,0,-1]\in S_1,
\label{eq:klnmzZfnBNmTgKHvml}
\end{equation}
where $\delta_{d,1}$ is taken from Theorem \ref{thm:kgSzsRtlBkkmHdkg}. To ease the notation, we show this only for $r_{i,2}$; we can obtain the set membership $\delta(r_{i,j})\in S_1$ for all $j\in[M]$ analogously or we can conclude it by symmetry. Letting  $\iota:=1+\lint{d/2}$, we know from \eqref{eq:hmVdJzBrHLsGJl} that $r_{i,2}$ is the $\iota$-th coordinate of $\uid u {i,1}$. So it follows from Observation \ref{obs:nnRdRsfcmJntH}, \eqref{eq:tmBtmBHltBhBSkSwRs},  and \eqref{eq:tdrmBchSnnRmNa}--\eqref{eq:tdrmBchSnnRmNb} that
\begin{align}
\delta_{d,\iota}(r_{i,2})=[&0,\dots,0,r_{i,2},0,\dots,0,-1]\cr
=(&a_\iota\vee a_d)\wedge\Bigl( \uid u i \vee\bigvee_{j\in[d-1]\setminus\set\iota} a_j\Bigr); 
\label{eq:mCsnMkRtn}
\end{align}
the validity of \eqref{eq:mCsnMkRtn} is also explained by Figure \ref{figpsd}. Indeed, the figure shows how to extract the homogeneous coordinate $u_\iota$ of a finite point $\vec u$ in the particular case when $d=4$ and $\iota=3$; this technique is applicable to $\vec u:=\uid u {i,1}$, too. The first meetand in  \eqref{eq:mCsnMkRtn} is the vertical magenta coordinate axis $a_3\vee a_4$ in the figure. The second meetand in  \eqref{eq:mCsnMkRtn} is the  the horizontal magenta hyperplane $\vvec u\vee a_1\vee a_2$ through $\vec u$. The meet of these two meetands is $\uid v 3=\delta_{4,3}(u_3)$, a copy of $u_\iota$ in the coordinate ring $\coring d\iota$.
Since $\uid u {i,1}$ is in $
S_1$ by \eqref{eq:mnTljtntKzRPsRp} and so are the 
$a_j$'s occurring in  \eqref{eq:mCsnMkRtn} by \eqref{eq:zbzGfSblZNtjs}, we obtain that  $\delta_{d,\iota}(r_{i,2})\in S_1$. By \eqref{eq:tmBtmBHltBhBSkSwRs},  $\delta_{d,\iota}(r_{i,2})\in\coring d\iota$.  As \eqref{eq:lNgtZmkrkm} mentions, the ring isomorphisms given in \eqref{eq:sjNKbrhrNaMgLjr} and \eqref{eq:sjNKbrhbtlMgLjr} are  composed from lattice operations and constants that are components of the frame $\vec f$ and so they are in $S_1$ by \eqref{eq:zbzGfSblZNtjs}.  Hence, $S_1$ is closed with respect to these isomorphisms, and we obtain the set membership part ``$\in$'' of 
\begin{equation}
\delta_{d,1}(r_{i,2}) = \bpf d \iota d 1(\delta_{d,\iota}(r_{i,2}))\in S_1.
\label{eq:swCnHksZnGphDsMT} 
\end{equation}
As the equality part follows from   \eqref{eq:wtlVdRcpBrCrM}, so  \eqref{eq:swCnHksZnGphDsMT} holds. Clearly, the argument above is applicable for any $j\in[M]$,  not just for $j=2$, since we can replace $\uid u{i,1}$ with the row of $A(i)$ that contains $r_{i,j}$. (Note that for $j=1$ we have that $\iota=1$ and  so \eqref{eq:wtlVdRcpBrCrM} is not needed.)
Therefore,  \eqref{eq:swCnHksZnGphDsMT} holds for any $j\in[M]$,  not only for $j=2$. That is, we have proved  \eqref{eq:klnmzZfnBNmTgKHvml}. Applying  \eqref{eq:lNgtZmkrkm}
to the field operations  \eqref{eq:hGkfmVtLvKtlc}, \eqref{eq:hGkfmVtLvKtld}, \eqref{eq:hsTrnbNsRlZlpN}, and \eqref{eq:vzvmgBrnplF}, we obtain that  $S_1$ is closed with respect to the field operations of $\coring d 1$. As the field isomorphism $\delta_{d,1}$ sends generating sets to generating sets,  \eqref{eq:klnmzZfnBNmTgKHvml} yields that $S_1$ contains a generating set of the field $\coring d 1$. The two just-mentioned facts imply \eqref{eq:fcNgrmgdRkksrmmsT}.

Next, with reference to let   \eqref{eq:sLskPnspTfXskQv}--\eqref{eq:tdrmBchSnnRmNb}, let
\begin{equation}
S:=\latgen{\set{\text{the components of the canonical frame}}\cup \coring d 1}.
\label{eq:mDgTpcwKllRNsTSW}
\end{equation}
We obtain from \eqref{eq:zbzGfSblZNtjs} and  \eqref{eq:fcNgrmgdRkksrmmsT} that $S\subseteq S_1$. 
Therefore, to prove that $S_1=L$ and so \eqref{eq:mRdszmrztmkslBbnt} holds, it suffices to show that 
 \begin{equation}
S\text{, defined in \eqref{eq:mDgTpcwKllRNsTSW}, equals }L=\Sub\psd.
\label{eq:lHntlkdnGnfhBln}
\end{equation}
For later reference, we note that our argument 
\begin{equation}
\text{proving
\eqref{eq:lHntlkdnGnfhBln} does not use Gelfand and Ponomarev's theorem,}
\label{eq:znTmvzhDzBtT}
\end{equation}
which has already been mentioned; see also Theorem \ref{thm:gelfand} in Section \ref{sect:append-ZLprfn}. 

Next, we aim to prove  \eqref{eq:lHntlkdnGnfhBln}. From  \eqref{eq:lNgtZmkrkm}, we know that  $S$ is closed with respect to the ring isomorphisms  in \eqref{eq:sjNKbrhrNaMgLjr} and \eqref{eq:sjNKbrhbtlMgLjr}. Thus, for any $i,j\in[d]$ such that $i\neq j$ and for any $r\in F$,  
\begin{equation}
\coring i j\subseteq S\text{ and so }
[0,\dots,0,1,0,\dots,0,-1,0,\dots,0]\in S,
\label{eq:jbKlCmnKtbchDKj}
\end{equation}
where $r$ and $-1$ are at the $j$-th position and the $i$-th positions, respectively. 
Since each element of $L$ is the join of finitely many atoms, it suffices to show that any projective point $\vec u=[u_1,\dots,u_d]$ belongs to $S$. Since  at least one of the homogeneous coordinates $u_1$, \dots, $u_d$ is nonzero, symmetry allows us to assume that $u_d\neq 0$. 
That is, by homogeneity, we assume that $u_d=-1$. 
Letting   $\uid v i=[0,\dots,0,u_i,0,\dots,0,-1]$  (where $u_i$ is  sitting in the $i$-th component) for $i\in[d-1]$, we have that $\uid v i\in \coring d i\subseteq S$ by \eqref{eq:jbKlCmnKtbchDKj}. Figure \ref{figpsd} visualizes the situation for $d=4$. In the figure, the black-filled elements are in $S$ by \eqref{eq:mDgTpcwKllRNsTSW} and \eqref{eq:jbKlCmnKtbchDKj}, and therefore so are the three depicted hyperplanes  containing the empty-filled $\vec u$. 
Among these three hyperplanes, one is adorned in green, another in magenta, and the third is filled with a floral pattern. (When translated to grayscale, the green plane appears lighter than its magenta counterpart.)
As $\vec u$ is the meet of the three hyperplanes, $\vec u\in S$ is clear when $d=4$. The same idea works for any $3\leq d\in \Nplu$; indeed, 
\begin{equation*}
\vec u:=\bigwedge_{i=1}^{d-1}  \Bigl(\uid v i\vee \bigvee_{j\in[d-1]\setminus \set i} a_j \Bigr) \in S
\end{equation*}
follows in a straightforward way by using Observation \ref{obs:nnRdRsfcmJntH}. Thus,  \eqref{eq:lHntlkdnGnfhBln} holds, implying \eqref{eq:mRdszmrztmkslBbnt} and the second inequality in \eqref{eq:nKmZfRtdrCh}.

Our argument to show the first inequality in \eqref{eq:nKmZfRtdrCh} is practically the same as that of Strietz \cite{strietz} for partition lattices\footnote{As partition lattices with more than five elements are not modular, we note that Lemma \ref{lemma:Dtwo}, quoted from  Wille \cite{willefm}, is valid even without assuming modularity. Lemma 4.1 from Cz\'edli \cite{czgQuogen}, a variant of the $D_2$-Lemma, would also suffice here.}. The key is Wille's $D_2$ Lemma:

\begin{lemma}[$D_2$-Lemma in Wille \cite{willefm}]\label{lemma:Dtwo}
If a subdirectly irreducible modular lattice with more than two elements is generated by $e_0,e_1,\dots,e_t$, then $e_0\vee \dots \vee e_{i-1}\geq e_i\wedge \dots \wedge e_t$ for every $i\in [t]$.
\end{lemma}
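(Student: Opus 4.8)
The plan is to prove the contrapositive. So suppose the displayed inequality fails for some $i\in[t]$; I will show that $L$ is not subdirectly irreducible. The underlying principle is that a lattice is subdirectly irreducible precisely when its congruence lattice has a least element above the identity relation $\Delta$ (the monolith $\mu$); consequently every congruence $\theta\neq\Delta$ contains $\mu$, and so any two nontrivial congruences intersect nontrivially. Hence it suffices to produce two congruences $\alpha,\beta\neq\Delta$ with $\alpha\cap\beta=\Delta$: then $L$ is a nontrivial subdirect product of $L/\alpha$ and $L/\beta$, a contradiction. Note that modularity will enter only when analyzing the congruences; the scheme itself does not use it, which matches the remark in the footnote that the lemma holds without modularity.

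To set up, observe that since $L$ is generated by the finite set $\{e_0,\dots,e_t\}$ it is bounded, with $0=e_0\wedge\dots\wedge e_t$ and $1=e_0\vee\dots\vee e_t$. Write $c:=e_0\vee\dots\vee e_{i-1}$ and $d:=e_i\wedge\dots\wedge e_t$, and assume $d\not\le c$, so that $c\wedge d<d$ and $c<c\vee d$. The generators split into a \emph{low} block $\{e_0,\dots,e_{i-1}\}$, with join $c$, and a \emph{high} block $\{e_i,\dots,e_t\}$, with meet $d$; the failure $d\not\le c$ says exactly that these two blocks ``cross'' at the four-element configuration $c\wedge d<c,d<c\vee d$. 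The idea is to exploit this crossing to split $L$ as a subdirect product: morally, one congruence should absorb the discrepancy contributed by the low block and the other that of the high block. By modularity the transposed quotients $[c\wedge d,c]\nearrow[d,c\vee d]$ and $[c\wedge d,d]\nearrow[c,c\vee d]$ are projective, and this is the tool I would use to compute the candidate congruences and to track which pairs they collapse.

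The step I expect to be the main obstacle is the actual construction of the two complementary congruences together with the verification that $\alpha\cap\beta=\Delta$, i.e.\ that the canonical map $L\to(L/\alpha)\times(L/\beta)$ is injective. The difficulty is that the construction is genuinely sensitive to the configuration, and the tempting symmetric choices fail: for the four-element Boolean lattice generated by its two atoms (where the inequality fails at $i=1$), pushing the low generator up to $e_0\vee d$ and the high generator down to $e_1\wedge c$ yields the \emph{same} congruence, so their intersection is not $\Delta$; and when $c$ and $d$ happen to be comparable, the analogous choices can even collapse to the all-relation $\nabla$. Thus the separating pair may have to be located in a part of $L$ lying away from the interval $[c\wedge d,c\vee d]$, and a correct argument likely proceeds by first reducing to the case in which $c$ and $d$ are incomparable and then choosing $\alpha$ and $\beta$ to separate the two upper covers of $c\wedge d$ inside a suitable sub-configuration. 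Once the right pair is in hand, injectivity of $L\to(L/\alpha)\times(L/\beta)$ would be checked by writing an arbitrary element of $L$ as a lattice term in $e_0,\dots,e_t$ and recovering it uniquely from its two images, using modularity to control the interaction; carrying out this bookkeeping is the technical heart of the proof.
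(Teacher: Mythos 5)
First, note that the paper itself does not prove this lemma: it is quoted from Wille \cite{willefm} (with a pointer to a variant in \cite{czgQuogen}), so there is no in-paper argument to compare with, and your attempt has to stand on its own. As it stands, it does not: it is a strategy outline, not a proof. You correctly reduce the task to exhibiting two congruences $\alpha,\beta\neq\Delta$ with $\alpha\cap\beta=\Delta$, you correctly set up $c:=e_0\vee\dots\vee e_{i-1}$ and $d:=e_i\wedge\dots\wedge e_t$ with $d\not\le c$, and you correctly observe that the naive candidates (the congruences generated by the transposed quotients of the square on $c$ and $d$) coincide in a modular lattice and therefore cannot serve as the separating pair. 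But then you stop: the passage beginning ``The step I expect to be the main obstacle'' concedes that the construction of $\alpha$ and $\beta$ and the verification $\alpha\cap\beta=\Delta$ --- which you yourself call ``the technical heart of the proof'' --- are not carried out, and phrases such as ``a correct argument likely proceeds by'' are placeholders for the entire content of the lemma.

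You are also missing the one structural fact that every proof of this lemma starts from and that yields one of the two congruences for free: the set $(c]\cup[d)$ (principal ideal union principal filter) is always a sublattice, and it contains every generator, since $e_0,\dots,e_{i-1}\le c$ and $e_i,\dots,e_t\ge d$; hence $L=(c]\cup[d)$, and $d\not\le c$ makes the two pieces disjoint. The two-block partition with blocks $(c]$ and $[d)$ is then a congruence, nontrivial because $|L|>2$ and proper because both blocks are nonempty. (For a \emph{simple} lattice this already finishes the argument with no modularity; that is presumably what the paper's footnote about dispensing with modularity refers to, partition lattices being simple.) Without this fact you have no control over $L$ whatsoever, so the concluding plan of ``writing an arbitrary element of $L$ as a lattice term in $e_0,\dots,e_t$ and recovering it uniquely from its two images'' has nothing to work with. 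Finally, be warned that modularity must enter essentially in whatever fills the gap: the statement is false for non-modular subdirectly irreducible lattices --- in $N_5$ with $0<x<z<1$ and $y$ incomparable to both, the ordered generating triple $(e_0,e_1,e_2)=(x,z,y)$ violates $e_0\vee e_1\ge e_2$ --- so your parenthetical hope that ``the scheme itself does not use'' modularity is a danger sign rather than a simplification. Until the separating pair of congruences is actually produced and $\alpha\cap\beta=\Delta$ is verified, there is no proof.
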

 
By \eqref{eq:mtzTbnkStmlDsl}, $L\cong \Subp F V$, where $V=\vecspace F d$. 
We know from the folklore that $\Subp F V$  is subdirectly irreducible. Having no reference to this fact at hand, we present an easy in-line proof here; some details of this proof will also be used later.  Let $a$ and $b$ be distinct atoms of $\Subp F V$, then $a=F \uid w 1$ and $b= F \uid w 2$ with the uniquely determined and linearly independent vectors $\uid w 1=(\nuid w 1_1,\dots,\nuid w 1_d)$ and $\uid w 2=(\nuid w 2_1,\dots,\nuid w 2_d)$ in $V$ such that $\nuid w 1_1+\dots+\nuid w 1_d=1$ and  $\nuid w 2_1+\dots+\nuid w 2_d=1$. Letting $c:=F(\uid w 1+\uid w 2)$, a trivial computation shows that $\set{0=a\wedge b, a,b,c,a\vee b}$ is a sublattice isomorphic to $M_3$, the 5-element modular lattice of length 2. Therefore, the (clearly) atomistic and modular lattice $\Subp F V$  is subdirectly irreducible by lines 4--5 in page 349 of Gr\"atzer \cite{ggfound}. For later reference, let us summarize what we have also obtained:

\begin{observation}\label{obs:HrmDnkrFnhmNtm}
For any two distinct atoms $a$ and $b$ of $\Subp F V$, $c$ defined above by $c:=F(\uid w 1+ \uid w 2)$ is a third atom,   $\set{0,a,b,c,a\vee b}$ is a sublattice of $\Subp F V$, and this sublattice is isomorphic to $M_3$.
\end{observation}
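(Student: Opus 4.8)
The plan is to reduce the whole claim to elementary linear algebra inside the rank-two element $a\vee b$, since three pairwise distinct atoms whose pairwise joins all coincide form, together with $0$ and that common join, a copy of $M_3$ in a modular atomistic lattice. First I would record that, because $a\neq b$ are atoms, the representatives $\uid w 1$ and $\uid w 2$ are linearly independent: if one were a scalar multiple of the other, the one-dimensional subspaces $a=F\uid w 1$ and $b=F\uid w 2$ would coincide. Hence $a\vee b=\fvgener F{\set{\uid w 1,\uid w 2}}$ has dimension two and $a\wedge b=0$. The normalizing conditions $\nuid w 1_1+\dots+\nuid w 1_d=1$ and $\nuid w 2_1+\dots+\nuid w 2_d=1$ are available precisely because $a$ and $b$ are finite points, and they pin down the two representatives uniquely; this uniqueness is the only role of the normalization (and is what gets reused later), whereas the $M_3$ structure below needs merely that $\uid w 1,\uid w 2$ are \emph{some} nonzero representatives.

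Next I would verify that $c$ is a genuine third atom under $a\vee b$. Linear independence gives $\uid w 1+\uid w 2\neq\vec 0$, so $c=F(\uid w 1+\uid w 2)$ is one-dimensional, i.e.\ an atom, and $\uid w 1+\uid w 2\in\fvgener F{\set{\uid w 1,\uid w 2}}$ gives $c\leq a\vee b$. To see $c\neq a$: if $c=a$, then $\uid w 1+\uid w 2$ would be a scalar multiple of $\uid w 1$, forcing $\uid w 2\in F\uid w 1=a$ and contradicting independence; the argument for $c\neq b$ is symmetric. Thus $a$, $b$, $c$ are three pairwise distinct atoms lying strictly between $0$ and the dimension-two element $a\vee b$.

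Finally I would assemble the diamond. Distinct atoms of $\Subp F V$ always meet in $0$, so $a\wedge b=a\wedge c=b\wedge c=0$; and any two distinct atoms contained in the two-dimensional subspace $a\vee b$ already span it (their join is a rank-two element below the rank-two element $a\vee b$, hence equal to it), so $a\vee b=a\vee c=b\vee c$. Consequently $\set{0,a,b,c,a\vee b}$ is closed under both operations: every meet of two distinct atoms, or of an atom with $0$, is $0$; every join of two distinct atoms, or of an atom with $a\vee b$, is $a\vee b$; and $0$, $a\vee b$ serve as bottom and top. These five elements therefore form a sublattice whose Hasse diagram is exactly that of $M_3$.

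I do not expect a real obstacle here; the statement is a routine verification once the rank-two picture is fixed. The only point meriting a remark is that the $M_3$ structure is insensitive to the choice of representatives and holds in every characteristic. In particular, over the two-element prime field the line $a\vee b$ carries exactly the three points $a$, $b$, $c$, with $\uid w 1+\uid w 2$ as the sole remaining nonzero direction, so $M_3$ is literally the whole interval $[0,a\vee b]$ there.
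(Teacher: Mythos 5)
Your proof is correct and follows essentially the same route as the paper: the paper merely asserts that ``a trivial computation'' shows $\set{0,a,b,c,a\vee b}\cong M_3$, and you have carried out exactly that computation (linear independence of the representatives $\uid w 1,\uid w 2$, hence $c$ is a third atom under the two-dimensional element $a\vee b$, with all pairwise meets equal to $0$ and all pairwise joins equal to $a\vee b$). Your observation that the normalizing conditions only serve to fix the representatives uniquely, while the $M_3$ structure itself is insensitive to the choice of nonzero representatives, is accurate and consistent with how the paper reuses these vectors later.
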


Returning to the proof of Theorem \ref{thmnok}, let us assume, to reach a contradiction, that $L=\Subp F V$ is generated by a subset $\set{e_0,e_1,e_2}$. Applying Lemma \ref{lemma:Dtwo},
we have that $e_0\geq e_1\wedge e_2$ and $e_0\vee e_1\geq e_2$. These two inequalities and those that we obtain from them  by permuting the generators imply that $\set{e_0,e_1,e_2}$ generates an $M_3$ sublattice, which is a contradiction showing that $\mng{L}\geq 4$.
We have verified both inequalities in \eqref{eq:nKmZfRtdrCh}, and the proof of Theorem \ref{thmnok} is complete.
\end{proof}

\section{Proving Theorem \ref{thmk}}\label{sect:bproof}

As a preparation for the proof of the second theorem, we prove the following easy lemma.

\begin{lemma}\label{lemma:fGtln}
Assume that $L_1,\dots,L_k$ are finitely generated lattices, $L=L_1\times\dots\times L_k$ is their direct product, and $\uid b 1=(\nuid b 1_1,\dots, \nuid b 1_k)$, \dots, $\uid b t =(\nuid b t_1,\dots, \nuid b t_k)$ are elements of $L$. Then $\set{\uid b 1,\dots,\uid b t}$ generates $L$ if and only if
\begin{enumerate}
  \item \label{lemma:afGtln} For each $i\in[k]$, $\set{\nuid b 1_i,\dots,\nuid b t_i}$ generates $L_i$, and
  \item \label{lemma:bfGtln} For each $i\in[k]$, there is a $t$-ary lattice term $f_i$ such that $f_i(\nuid b 1_i$, $\dots$, $\nuid b t_i)$ equals $1_i$, the top element of $L_i$, but for every $j\in[k] \setminus \set i$, 
$f_i(\nuid b 1_j$, \dots, $\nuid b t_j)$ equals $0_j$, the bottom element of $L_j$. 
\end{enumerate}
\end{lemma}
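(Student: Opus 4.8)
The plan is to prove both implications by exploiting the single structural fact that in a direct product the projections $\pi_i\colon L\to L_i$ are surjective lattice homomorphisms, so that every lattice term is evaluated coordinatewise: $\pi_i\bigl(f(\uid b 1,\dots,\uid b t)\bigr)=f(\nuid b 1_i,\dots,\nuid b t_i)$. Before anything else I would record the (well-known) remark that a \emph{finitely generated} lattice is automatically bounded: if $L_i$ is generated by $g_1,\dots,g_n$, then $g_1\wedge\dots\wedge g_n$ and $g_1\vee\dots\vee g_n$ are its bottom $0_i$ and top $1_i$, since every element, being a term value in the generators, lies between the meet and the join of all generators. Hence each $L_i$ has a $0_i$ and a $1_i$, so does $L$, and for each $i$ the ``coordinate unit'' $\vec e_i\in L$ whose $i$-th coordinate is $1_i$ and whose other coordinates are $0_j$ is a well-defined element. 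This is exactly what makes conditions \eqref{lemma:afGtln} and \eqref{lemma:bfGtln} meaningful.

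For necessity, assume $\set{\uid b 1,\dots,\uid b t}$ generates $L$. Applying the surjective homomorphism $\pi_i$ to a generating set produces a generating set of the image, which gives \eqref{lemma:afGtln} immediately. For \eqref{lemma:bfGtln}, since the generators generate all of $L$, the element $\vec e_i$ is a term value, say $\vec e_i=f_i(\uid b 1,\dots,\uid b t)$ for some $t$-ary lattice term $f_i$; projecting this equality into the $j$-th coordinate and using that $\pi_j$ commutes with term formation yields $f_i(\nuid b 1_j,\dots,\nuid b t_j)=\pi_j(\vec e_i)$, which is $1_i$ when $j=i$ and $0_j$ when $j\neq i$ — precisely \eqref{lemma:bfGtln}.

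For sufficiency, let $S:=\latgen{\uid b 1,\dots,\uid b t}$ be the generated sublattice and fix an arbitrary $\vec x=(x_1,\dots,x_k)\in L$; the goal is $\vec x\in S$. By \eqref{lemma:afGtln}, for each $i$ there is a term $h_i$ with $h_i(\nuid b 1_i,\dots,\nuid b t_i)=x_i$, so the element $\uid y i:=h_i(\uid b 1,\dots,\uid b t)\in S$ has $i$-th coordinate $x_i$ while its remaining coordinates are uncontrolled. The key move is to erase those unwanted coordinates with the separating term $f_i$ from \eqref{lemma:bfGtln}: since $f_i(\uid b 1,\dots,\uid b t)=\vec e_i$, the meet $\uid z i:=\uid y i\wedge f_i(\uid b 1,\dots,\uid b t)\in S$ has $i$-th coordinate $x_i\wedge 1_i=x_i$ and $j$-th coordinate $(\,\cdot\,)\wedge 0_j=0_j$ for every $j\neq i$. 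Finally $\vec x=\uid z 1\vee\dots\vee\uid z k\in S$, because in coordinate $i$ the join equals $x_i\vee\bigvee_{j\neq i}0_i=x_i$. Therefore $S=L$.

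The entire argument is a bookkeeping of the homomorphism property of projections, so the only genuinely load-bearing ingredient is condition \eqref{lemma:bfGtln}: its ``separating terms'' $f_i$ are what let us reconstruct the coordinate units $\vec e_i$ inside $S$ and thereby split an arbitrary $\vec x$ into single-coordinate pieces that recombine to $\vec x$. I expect the only real (and minor) obstacle to be a notational one, namely justifying that the tops and bottoms invoked throughout actually exist; this is dispatched at the outset by the finite-generation hypothesis, as noted in the first step.
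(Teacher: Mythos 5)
Your proof is correct and follows essentially the same route as the paper: necessity via the coordinate projections (which send generating sets to generating sets and commute with terms), and sufficiency via the decomposition $\vec x=\bigvee_i\bigl(h_i(\uid b 1,\dots,\uid b t)\wedge f_i(\uid b 1,\dots,\uid b t)\bigr)$. Your explicit justification that finite generation forces the existence of $0_i$ and $1_i$ is a small welcome addition; the paper merely asserts it.
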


Visually, we can form a $k$-by-$t$ matrix with the $\uid b i$'s being the columns and we apply the terms $f_i$ to the rows of this matrix.

\begin{proof}
First of all, note that $1_i$ and $0_j$ in the lemma exist since $L_i$ and $L_j$ are finitely generated.
To prove the ``only if'' part, assume that $\set{\uid b 1,\dots,\uid b t}$ generates $L$. Since the $i$-th projection $L\to L_i$ defined by $(x_1,\dots,x_k)\mapsto x_i$ sends generating sets to generating sets, \eqref{lemma:afGtln} holds. So does \eqref{lemma:bfGtln}  since there is a lattice term $f_i$ such that $(0,\dots,0,1,0,\dots,0)\in L$ (with $1$ sitting at the $i$-th place) equals $f_i(\uid b 1,\dots,\uid b t)$.

To prove the ``if'' part, assume that \eqref{lemma:afGtln} and \eqref{lemma:bfGtln} hold, and let $\vec w=(w_1,\dots,w_k)\in L$. 
For each $i\in[k]$, \eqref{lemma:afGtln} allows us to pick a $t$-ary lattice term $g_i$ such that $g_i(\nuid b 1_i,\dots,\nuid b t_i)=w_i$ in $L_i$. Furthermore,  \eqref{lemma:bfGtln} yields
a $t$-ary lattice term $f_i$ such that $f_i(\nuid b 1_i,\dots,\nuid b t_i)=1_i$ but $f_i(\nuid b 1_j,\dots,\nuid b t_j)=0_j$ for all $j\in[k]\setminus\set i$. Then 
\[\vec w=\bigvee_{i\in [k]}\bigl(g_i(\uid b 1,\dots,\uid b t) \wedge f_i(\uid b 1,\dots,\uid b t)\bigr)\in\latgen{\uid b 1,\dots,\uid b t}
\]
completes the proof of Lemma \ref{lemma:fGtln}.
\end{proof}

The following well-known fact follows from, say, Vanstone and Oorschot \cite[Theorem 3.3]{vanstone}.

\begin{fact}\label{fact:slfDlt} For $d\in\Nplu$ and a field $F$, 
$\Sub{ \vecspace F d}$ is a selfdual lattice.
\end{fact}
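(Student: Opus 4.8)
The plan is to exhibit an explicit dual anti-automorphism of $\Sub{\vecspace F d}$ rather than merely invoking the cited reference. Equip $V=\vecspace F d$ with the standard symmetric bilinear form $\langle \vec x,\vec y\rangle:=\sum_{i\in[d]}x_iy_i$; its Gram matrix is the identity, so it is nondegenerate over every field. For a subspace $X\leq V$, define its orthogonal complement $X^\perp:=\set{\vec v\in V:\langle\vec v,\vec x\rangle=0\text{ for all }\vec x\in X}$, and let $\psi\colon\Sub V\to\Sub V$ be $X\mapsto X^\perp$. The goal is to prove that $\psi$ is an anti-automorphism, which makes $\Sub V$ isomorphic to its order dual.

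The key steps would be the following. First, I would check that $\psi$ is order-reversing: if $X\subseteq Y$, then every vector orthogonal to all of $Y$ is orthogonal to all of $X$, so $Y^\perp\subseteq X^\perp$. Second, I would establish the dimension formula $\dim X^\perp=d-\dim X$; the cleanest route is to identify $X^\perp$, under the isomorphism $V\cong V^*$ induced by the nondegenerate form, with the annihilator $\set{f\in V^*:f|_X=0}$, whose dimension is $d-\dim X$ by the rank-nullity theorem applied to the restriction map $V^*\to X^*$. Third, from these two facts I would deduce $(X^\perp)^\perp=X$: the containment $X\subseteq(X^\perp)^\perp$ is immediate from the definition, and applying the dimension formula twice gives $\dim(X^\perp)^\perp=\dim X$, forcing equality. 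Hence $\psi$ is an involution, so a bijection, and together with the order-reversing property this makes $\psi$ an anti-automorphism; an order-reversing bijection automatically interchanges joins and meets, so $\Sub V$ is isomorphic to its dual and therefore selfdual.

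I do not anticipate any genuine obstacle here, since the one potential worry --- that an individual subspace might be totally isotropic, with the form restricting degenerately to it --- is irrelevant: both the dimension formula and the involution property depend only on nondegeneracy of the form on all of $V$, which holds because the Gram matrix is invertible over every field. If one preferred to sidestep isotropy entirely, the whole argument can be recast through the annihilator duality $X\mapsto\set{f\in V^*:f|_X=0}$ between $\Sub V$ and $\Sub{V^*}$, composed with the coordinate isomorphism $V^*\cong V$; this yields exactly the same map $\psi$ while never mentioning the bilinear form at all.
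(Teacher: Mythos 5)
Your proof is correct. The paper does not actually prove this fact; it merely cites Vanstone and van Oorschot \cite[Theorem 3.3]{vanstone}, which is precisely the duality of linear codes given by the orthogonal complement $X\mapsto X^\perp$ with respect to the standard bilinear form --- the very map you construct. So you have simply written out, correctly and completely, the standard argument that the paper delegates to the reference: nondegeneracy of the form gives $\dim X^\perp=d-\dim X$ via the annihilator identification, hence $(X^\perp)^\perp=X$, and an order-reversing involution is a dual automorphism. Your remark that possible isotropy of individual subspaces is irrelevant is also the right observation; only nondegeneracy on all of $V$ is needed.
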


\begin{proof}[Proof of Theorem \ref{thmk}] 
To ease the notation, let $h:=\lint{d/2}$ (``$h$'' comes from \ul half) and $r:=4+\uint{t/\lint{d^2/4}}$.  We know from \eqref{eq:nKmZfRtdrCh} in Theorem \ref{thmnok} that $L=\Subp F V$ has an $r$-dimensional generating vector.

First, we show \eqref{eq:fDnKnCspJhb}. By Remark \ref{rem:sbrStvgnBx}, it suffices to show that $L^\mu$ is $(1+r)$-generated. 
For $i\in \set{1,h}$, let $A_i$ be the set of $i$-dimensional subspaces of $V$, that is, $A_i$ is the set of elements of height $i$ in $L$. In particular, $A_1$ is the set of atoms of $L$ and $|A_h|=\mu$; see \eqref{eq:ckHlrgjCnpzgN}. Define a binary operation ``product''  on $A_1$ as follows:
For $a,b\in A_1$, let 
\begin{equation}
ab:=\begin{cases}
c\text{ defined in Observation \ref{obs:HrmDnkrFnhmNtm} }&\text{if }a\neq b\text{ and}\cr
0=0_L&\text{ if }a=b.
\end{cases}
\label{eq:lgndMdkMrGjrZrttT}
\end{equation}
This operation, denoted by concatenation, has precedence over the lattice operations.
Clearly, Observation \ref{obs:HrmDnkrFnhmNtm} implies the following.

\begin{fact}\label{fact:btnNmnjtkBfRcSl} For any $b,e\in A_1$, 
either $b\neq e$ and $\set{0,b,be,e,be\vee e}$ is a sublattice isomorphic to $M_3$, or $b=e$  and $be=0$; in both cases, $b\leq be\vee e$.
\end{fact}

Let $\vec g=(g_1,\dots,g_r)$ be a generating vector of $L$.  Let $u_1,\dots, u_\mu$ be a repetition-free enumeration of the elements of (the $\mu$-element) $A_h$. For $j\in[r]$, we 
define $\iud j b\in L^\mu$ as the constant vector $(g_j,g_j,\dots,g_j)$. We define a further vector, $\iud 0 b:=(u_1,u_2,\dots,u_\mu)\in L^\mu$. We claim that 
\begin{equation}
\Psi:=\set {\iud 0 b,\iud 1 b,\dots,\iud r b} \text{ generates }L^\mu.
\label{eq:krCznhnstfdTVns}
\end{equation}
Since $\latgen{u_i,g_1,\dots,g_r}=L$ for all $i\in[\mu]$,
$\Psi$ (apart from self-explanatory notational differences) satisfies \eqref{lemma:afGtln} of Lemma \ref{lemma:fGtln}.

Showing that $\Psi$ satisfies  \eqref{lemma:bfGtln} of Lemma \ref{lemma:fGtln}, too, needs more work.
For each $i\in[\mu]$, fix an $h$-element subset $S_i$ of $A_1$ such that $u_i=\bigvee\set {e: e\in S_i}$. Let $\vxi=(\xi_1$, \dots, $\xi_r)$ be a vector of variables, and let $\pxi$ stand for $(\xi_0$, $\xi_1$, \dots, $\xi_r)$. For each element $w$ of $L$, let us fix an $r$-ary lattice term 
$\ter w(\vvxi $ such that $\ter w(\vvec g)=w$. 
If $w=ab$, see \eqref{eq:lgndMdkMrGjrZrttT}, then $\ter w(\vvxi $ is written as $\ter{(ab)}(\vvxi $. We can fix a $d$-element subset $B$ of $A_1$ such that $1=1_L$ equals $\bigvee B$. For each $i\in[\mu]$, we define the following lattice term:
\begin{equation}
f_i(\pxi):=\bigvee_{b\in B}\biggl(\ter b(\vvxi \wedge
\bigwedge_{e\in S_i}  \Bigl( 
\ter{(be)}(\vvxi \vee \bigl(\xi_0\wedge \ter e(\vvxi \bigr) 
\Bigr) \biggr). 
\label{eq:szhvmklDsrbnB}
\end{equation}
Let $(\pjg):=(u_j,g_1,\dots,g_r)$. We need to show that $f_i(\pjg)=0_L$ if $j\neq i$ and it is $1_L$ if $j=i$.
For the meetand $\beta_{b,e}(\pxi):=\ter{(be)}(\vvxi \vee \bigl(\xi_0\wedge \ter e(\vvxi \bigr)$ occurring in \eqref{eq:szhvmklDsrbnB}, 
\begin{equation}
\beta_{b,e}(\pjg)=
\ter{(be)}(\vvec g) \vee \bigl( u_j  \wedge \ter e(\vvec g)  \bigr)
= be \vee \bigl(u_j  \wedge  e \bigr).
\label{eq:fgtKmnmGlckmvn}
\end{equation}

There are two cases to consider. First, assume that $j=i$. Then, for every $e\in S_i=S_j$, $e\leq u_j$ yields that $\beta_{b,e}(\pjg)=be\vee e$,  
whereby Fact \ref{fact:btnNmnjtkBfRcSl} implies that $\ter b(\vvec g)=b\leq \beta_{b,e}(\pjg)$.  Thus, the meet $\bigwedge_{e\in S_i}$ as a meetand in \eqref{eq:szhvmklDsrbnB} makes no effect and we obtain that $f_i(\pjg)=\bigvee_{b\in B}\ter b(\vvec g)= \bigvee_{b\in B} b=1_L$ if $j=i$, as required.

Second, assume that  $j\neq i$. Since $u_i=\bigvee S_i$ and $u_j$, belonging to the antichain $A_h$,  are incomparable, there is an $e\in S_i$ such that $e\nleq u_j$. 
For this atom $e$,  $u_j\wedge e$ in \eqref{eq:fgtKmnmGlckmvn} is $0_L$, whence $\beta_{b,e}(\pjg)=be$. Thus, each of the joinands of $\bigvee_{b\in B}$ in \eqref{eq:szhvmklDsrbnB} is (at most) $\ter b(\vvec g)\wedge \beta_{b,e}(\pjg) =b\wedge be=0$, no matter whether $b=e$ or $b\neq e$. Therefore, $f_i(\pjg)=0$ if $j\neq i$, as required. 
Hence,  $\Psi$ 
satisfies \eqref{lemma:bfGtln} of Lemma \ref{lemma:fGtln}, whereby we conclude  \eqref{eq:krCznhnstfdTVns}. Thus,  $\mng{L^k}\leq \mng{L^\mu} \leq 1+r=5+\uint{t/\lint{d^2/4}}$, proving \eqref{eq:fDnKnCspJhb}.

Next, we deal with the first equality in \eqref{eq:fDnKnCspJha}.
Let $M:=\lint{d^2/4}$ and $m:=\mng L$. For the sake of contradiction, suppose that 
\begin{equation}
m=\mng L<\uint{t/M}\quad \text{ (indirect assumption).}
\label{eq:flJnhtLcBblkps}
\end{equation}
Let $\set{g_1,\dots,g_m}$ be a generating set of $L=\Sub V=\Sub{\vecspace F d}$. For each $i\in [m]$, let $n_i$ be the dimension of the subspace $g_i$. Let us pick an $n_i$-by-$d$ matrix $B(i)$ over $F$ such that the rows of $B(i)$ form a basis of $g_i$.  After performing the Gauss--Jordan elimination to the rows of $B(i)$, these rows still form a basis of $g_i$. Hence, we can assume that $B(i)$ is in reduced row echelon form and the number $n_i$ of its rows equals its rank.
So for $j,\iota\in [n_i]$, the $\iota$-th element in the $j$-th row of $B(i)$ is $\delta_{j,\iota}$ (Kronecker delta).
Note that  $B(i)$ has the same shape as $A(i)$ in 
\eqref{eq:krBthmFpBltMJrnkTlZg} would have if we changed the universal quantifiers in the main diagonal to units (that is, to 1's). Let $H(i)$ stand for the set of those entries of $B(i)$ that differ from $0$ and $1$. This entries are in the last $d-n_i$ columns, so $|H(i)|\leq n_i(d-n_i)$. Hence, using \eqref{eq:nNzImdrHlTrmLj} and 
that the quadratic function $x\mapsto x(d-x)$ takes its maximum at $d/2$, it follows that 
$|H_i|\leq n_i(d-n_i)\leq M$. Letting $H:=H(1)\cup\dots\cup H(m)$, we have that $|H|\leq mM$. Observe that no matter whether $t/M$ is an infinite cardinal, an integer number, or a non-integer number, the indirect assumption \eqref{eq:flJnhtLcBblkps} and $m\in\Nnul$ imply that 
\begin{equation}
m<t/M, \text{ whereby }|H|\leq mM < t=\mng F.
\label{eq:mnKlhfRkNTnsRkzgGkr}
\end{equation}
Let $P$ denote the subfield of $F$ generated by $H$. By \eqref{eq:mnKlhfRkNTnsRkzgGkr},  $P\subset F$ (proper subfield). With  $V':=\vecspace P d$ and $L':=\Subp P{V'}$, $\phi$ from  Lemma \ref{lemma:bgzsLm} is a lattice embedding  $L'\to L$. 
For $i\in[m]$, using that $B(i)$ is also a matrix over $P$, let $g_i'$ be the subspace of $V'$ spanned by the rows of $B(i)$. Denoting the set of rows of $B(i)$ by $\nuid X i$, the last sentence of Lemma \ref{lemma:bgzsLm} gives that
\begin{equation}
\phi(g'_i)=\phi(\fvgener P{\nuid X i})   = \fvgener F{\nuid X i}=g_i
\label{eq:tZsplmnkPKpCWmLWcps}
\end{equation}
for $i\in[m]$. Since $\phi$ is an embedding, $\phi(L')$ is a sublattice of $L$. This sublattice includes the generating set $\set{g_i:i\in [m]}$ by \eqref{eq:tZsplmnkPKpCWmLWcps}. Thus, $\phi(L')=L$, implying that $\phi$ is surjective. Pick an element $r\in F\setminus P$. By the surjectivity of $\phi$, the 1-dimensional subspace  $S:=\fvgener F{\set{(r,1,1,\dots,1)}}\in L$
has a $\phi$-preimage $S'\in L$. Since $\phi$ is length-preserving by Lemma \ref{lemma:bgzsLm}, $S'$ is also 1-dimensional. So $S':=\fvgener P{\set{(p,q_2,q_3,\dots,q_d)}}$ for some $p,q_2,\dots,q_d\in P$. The last sentence of Lemma \ref{lemma:bgzsLm} yields a $\lambda\in F$ such that $(r,1,1,\dots,1)=\lambda(p,q_2,q_3,\dots,q_d)$. Comparing the second components, $\lambda=q_2^{-1}\in P$. Thus, the equality of the first components yields that $r=p\lambda\in P$, contradicting the choice of $r$. Now that the indirect assumption \eqref{eq:flJnhtLcBblkps} has lead to a contradiction, we have shown the first inequality in \eqref{eq:fDnKnCspJha}.  Remark \ref{rem:sbrStvgnBx} gives the second inequality, so we have proved \eqref{eq:fDnKnCspJha}.

\begin{figure}[ht] 
\centerline{ \includegraphics[width=0.98\textwidth]{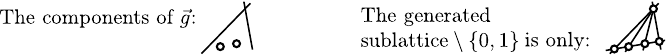}} \caption{For $\vec g=\uid g i$,  $\typ{\vvec g}$  cannot be $(2,2)$}\label{fig2}
\end{figure}

Next, we turn our attention to \eqref{eq:ktTlTkvrglLk}  and \eqref{eq:mtPhfGnprWmt}. So we assume that $F$ is a prime field and $d=2$. Furthermore, based on the isomorphism given in \eqref{eq:mtzTbnkStmlDsl}, let $\psd=\psp 2=\psp 2(F)$ be the projective plane over $F$ and, in the rest of the proof of Theorem \ref{thmk}, let  $L:=\Sub{\psp 2}$. Some geometric terms and methods in addition to the lattice theoretic ones  will frequently appear in our considerations. In particular, instead of drawing a usual Hasse diagram of $L=\Sub{\psp 2}$,  we visualize $L$ and its sublattices  by drawing the points and lines they  contain. 
Furthermore, we frequently use  the following definition (but only for projective \emph{planes}) without referencing it.

\begin{definition} For $L=\Sub{\psp 2}$ and a quadruple $\vec g=(g_1,\dots,g_4)\in L^4$, we say that $\vec g$ is in \emph{general position} if for any $\set{i,j,k}\subset[4]$ such that $|\set{ i,j,k }|=3$,
\begin{itemize}
  \item  $g_i\nleq g_j$, that is,  $\set{g_1,\dots,g_4}$ is an antichain;
  \item if $g_i$, $g_j$, and $g_k$ are points, then $g_i\nleq g_j\vee g_k$, that is, no three collinear points occur among the components of $\vec g$;  and 
  \item if $g_i$, $g_j$, and $g_k$ are lines, then $g_j\wedge g_j\nleq g_k$, that is, no three concurrent lines occur among the components of $\vec g$.
\end{itemize}
A \emph{complete quadrangle} is a quadruple $\vec g=(g_1,\dots,g_4)$ in general position such that $g_1$, \dots, $g_4$ are points.
\end{definition}

Analogously to an  earlier notation, $A_1$ is the set of points while $A_2$ is the set of lines. We show that 
\begin{equation}
\text{if }t=0\text{, }d=3\text{, and }\mng{L^k}=4\text{, then }k\leq 4.
\label{eq:lfGjbnfbhhM}
\end{equation}
So $F$ is a prime field now, and we can assume that $k$ is the largest positive integer such that $\mng{L^k}=4$. This makes sense since $k\geq 1$ by \eqref{eq:nKmZfRtdrCh} and the maximum exists by Observation \ref{obs:hbNvlnlgnD}. Choose a 4-dimensional generating vector $(\uid b 1,\dots, \uid b 4)$ of $L^k$. (Here the $\uid b i$, $i\in[4]$, are also vectors since they belong to $L^k$.)  Let 
\begin{gather}
\uid g i=(\nuid g i_1,\nuid g i_2,\nuid g i_3,\nuid g i_4):=  (\nuid b 1_i,\nuid b 2_i,\nuid b 3_i,\nuid b 4_i)\,\,\text{ for }i\in [k];\cr
\text{it is a generating vector of }L\text{ by Lemma \ref{lemma:fGtln}.}
\label{eq:BjtrBnkmstzStHnpr}
\end{gather}
Define the \emph{Kronecker delta} in a lattice $L$  by $\kdelta L i i:=1_L$ and, for $j\neq i$, $\kdelta L i j:=0_L$. 
Let $f_i$, $i\in [k]$, be the quaternary lattice terms provided by Lemma \ref{lemma:fGtln}; then 
\begin{equation}
f_i(\uid g j)= \kdelta L i j.
\label{eq:KrNckRdlt}
\end{equation}
As $\set{\nuid g i_1,\dots,\nuid g i_4}$
generates $L$, it is easy to see that for each $i\in [k]$ and $j\in[4]$, $\nuid g i_j$ is a point or a line. For later reference, we formulate this fact:
\begin{equation}
\nuid g i_j\notin\set{0, 1}\text{ and any 4-element generating set}\subseteq A_1\cup A_2.
\label{eq:mKzrmsMtpSt}
\end{equation}

\begin{figure}[ht] 
\centerline{ \includegraphics[width=0.98\textwidth]{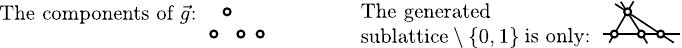}} \caption{A quadruple of points not in general position}\label{fig3}
\end{figure}

For $x\in L$, let $\hgh(x)$ denote the \emph{height} of $x$; it is the projective dimension plus 1. For example, for $x\in A_1$, $\hgh(x)=1$. 
For a generating vector $\vec g=(g_1,g_2,g_3,g_4)\in L^4$ of $L$, define the \emph{type} and the \emph{fine type} of $\vec g$ as 
\begin{gather*}
\typ{\vvec g}:=(|\set{i\in [4]: g_i\text{ is a point}}|,\,|\set{i\in [4]: g_i\text{ is a line}}|)\text{ and}\cr
\ftyp{\vvec g}:=(\hgh(g_1),\hgh(g_2),\hgh(g_3),\hgh(g_4)).
\end{gather*}
We know from \eqref{eq:mKzrmsMtpSt} that the sum of the  components of $\typ{\vvec g}$ and that of $\ftyp{\vvec g}$ are 4.  
It follows from \eqref{eq:BjtrBnkmstzStHnpr} and  \eqref{eq:mKzrmsMtpSt} that for every generating quadruple $\vec h$ and, in particular,  for every $i\in[k]$
\begin{equation}
\ftyp{\vec h}\in\set{1,2}^4 \text{ and  } \ftyp{\uid g i}\in\set{1,2}^4.
\label{eq:TrtnCcVjmpmVr}
\end{equation}
The \emph{type of a fine type}  $\vec \tau\in \set{1,2}^4$ is $\typ{\vec\tau}:=(|\{i\in[4]: \tau_i=1\}|$, $|\{i\in[4]: \tau_i=2\}|)$. Note the obvious rule:  $\typ{\ftyp{\uid g i}}=\typ{\uid g i}$ for every $i\in[k]$. 
Note also that our figures and arguments 
\begin{equation}
\text{will omit the most trivial cases like } \nuid g i_1=\nuid g i_2.
\label{eq:TrlPzGslRpbrkHsG}
\end{equation}
Using that every line contains at least three points,  Figure \ref{fig2}  shows that for any generating quadruple $\vec h$ and, in particular, for $i\in [k]$,
\begin{equation}
\text{neither }\typ{\vec h}\text{ nor }\typ{\uid g i} \text{ can be }(2,2).
\label{eq:prSkRdgKfStprTjn}
\end{equation}

In $\psp 2$,  any two distinct lines intersect in a point.  The following fact is also well known; see, for example, Veblen and Young \cite[page 93]{veblenyoung}. 
\begin{fact}\label{fact:fnlPvdlFhLWlfr}
If $\vec x=(x_1,\dots,x_4)$ and $\vec x'=(x'_1,\dots,x'_4)$ are complete quadrangles in $\psp 2$, then $\psp 2$  has an automorphism $\phi$ such that $\phi(x_i)=x'_i$ for $i\in [4]$. Consequently, $L$ also has such an automorphism. 
\end{fact}
Therefore, our figures are sufficiently general. We claim the following.

\begin{figure}[ht] 
\centerline{ \includegraphics[scale=1.0]{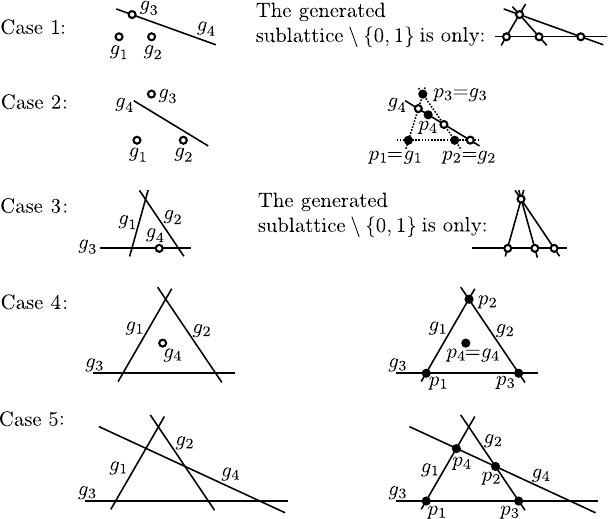}} \caption{
Proving Fact \ref{fact:tmgsfjtkgmnDhmkPsSr}}\label{fig4}
\end{figure} 

\begin{fact}\label{fact:nlLlslTshbr}
Every generating quadruple of $L$ is in general position.
\end{fact}

\begin{figure}[ht] 
\centerline{ \includegraphics[scale=1.0]{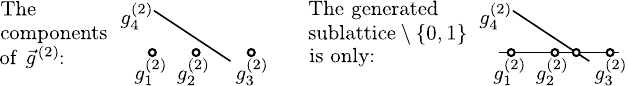}} \caption{Three collinear points and a line}\label{fig-noncoll}
\end{figure} 

To show this, assume that $\vec h$ is a generating quadruple. Since $\typ{\vec h}\neq(2,2)$ by \eqref{eq:prSkRdgKfStprTjn} and $L$ is selfdual, see Fact \ref{fact:slfDlt}, we can assume that $\typ{\vec h}\in\set{(4,0),(3,1)}$. If $\typ{\vec h}=(4,0)$, then $\vec h$ is in general position by  Figure \ref{fig3} and \eqref{eq:TrlPzGslRpbrkHsG}. For  $\typ{\vec h}=(3,1)$, we draw the same conclusion from Case 1 of Figure \ref{fig4}
and Figure \ref{fig-noncoll}. Thus, Fact \ref{fact:nlLlslTshbr} holds.

Our next step is to show the following fact.

\begin{fact}\label{fact:tmgsfjtkgmnDhmkPsSr} If $|F|\geq 3$, then  for  each generating vector $\vec g=(g_1,g_2,g_3,g_4)$ of $L$, there is a complete quadrangle $(p_1,\dots,p_4)$ of $L$ such that $p_i\leq g_i$ for $i\in[4]$.
\end{fact}

To show Fact \ref{fact:tmgsfjtkgmnDhmkPsSr}, observe that 
Facts  \ref{fact:fnlPvdlFhLWlfr} and \ref{fact:nlLlslTshbr} take care of the case $\typ{\pvec g }= (4,0)$.
Hence,  there are five cases to consider, see Figure \ref{fig4}, but each of them is obvious. We exclude Cases 1 and 3 since then $\set{g_1,\dots,g_4}$ does not generate $L$; indeed, the figure shows on the right what the generated sublattice is and this sublattice is clearly not the whole $L$ since every line of the projective plane has at least three\footnote{We now have at least four points since $|F|\geq 3$. However, we continue to use the term ``at least three points'' to make this argument applicable also when $|F|=2$.} points. In Cases 2, 4, and 5, the figure shows how to choose the $p_i$'s. Note for later reference that only Case 2 needs the assumption that $|F|\geq 3$, which makes it possible to pick a fourth point on the line $g_4$.
So, Figure \ref{fig4} has proved Fact \ref{fact:tmgsfjtkgmnDhmkPsSr}.

Now we can show that 
\begin{equation}
\text{if } \typ{\uid g i}\in\set{(4,0),\,(0,4)}\text{ for some }i\in[k]\text{, then } k=1.
\label{eq:mVnlTfGlnm}
\end{equation}
For the sake of contradiction, suppose that, say,  $\typ{\uid g 1}\in\set{(4,0),(0,4)}$ but $k>1$. By the selfduality of $L$, see  
Fact \ref{fact:slfDlt},  we 
can assume that $\typ{\uid g 1}=(4,0)$. 
First, we assume that $|F|\geq 3$. 
Fact \ref{fact:tmgsfjtkgmnDhmkPsSr} yields a
complete quadrangle $\vec p$ such that $p_i\leq \nuid g 2_i$ for $i\in[4]$. 
By Fact \ref{fact:nlLlslTshbr},  $\uid g 1$ is a complete quadrangle. Thus, 
by Fact \ref{fact:fnlPvdlFhLWlfr}, we can take an automorphism $\phi$ of $L$ such that $\phi(\uid g 1_i)=p_i\leq \nuid g 2_i$ for $i\in [4]$; we  write
$\phi(\uid g 1)\leq \uid g 2$ for short.
Using \eqref{eq:KrNckRdlt} and the fact that $f_1$ is order-preserving, we obtain that
\begin{equation}
1=\phi(\kdelta L 1 1)=\phi(f_1(\uid g 1))=f_1(\phi(\uid g 1))\leq f_1(\uid g 2)=\kdelta L 1 2=0,
\label{eq:fltskBrDskRcdlv}
\end{equation} 
which is a contradiction showing \eqref{eq:mVnlTfGlnm} for the case $|F|\geq 3$.

If $|F|=2$ and so the projective plane is the Fano plane, then the argument for \eqref{eq:mVnlTfGlnm} needs the following modifications. 
Even though Case 2 of Figure \ref{fig4} and Fact \ref{fact:tmgsfjtkgmnDhmkPsSr} fail for the Fano plane, Fact \ref{fact:tmgsfjtkgmnDhmkPsSr}  still holds for the 
particular case $\typ{\vvec g}\in\set{(1,3),\,(0,4)}$ since then the earlier argument relies only on Cases 3, 4, and 5 of Figure \ref{fig4}. 
Like we did right after \eqref{eq:mVnlTfGlnm}, we assume that \eqref{eq:mVnlTfGlnm} is false and its failure is witnessed by 
$\uid g 1$ of type $(4,0)$ and $\uid g 2$. If $\typ{\uid g 2}\in\set{(1,3),\,(0,4)}$, then the just-mentioned particular case of Fact  \ref{fact:tmgsfjtkgmnDhmkPsSr} leads to a  contradiction in the same way as before. We know from \eqref{eq:prSkRdgKfStprTjn} that $\typ{\uid g 2}\neq(2,2)$. 
If $\typ{\uid g 2}=(4,0)$, then Facts  \ref{fact:fnlPvdlFhLWlfr} and \ref{fact:nlLlslTshbr} give an automorphism $\phi\colon L\to L$ such that $\uid g 2=\phi(\uid g 1)$, whereby \eqref{eq:fltskBrDskRcdlv} (with equality in its middle rather than an inequality) leads to a contradiction.  Hence, based on \eqref{eq:mKzrmsMtpSt}, we can assume that $\typ{\uid g 2}=(3,1)$. 
Since, for any $i,j\in[k]$, $\kdelta L i j$ is a fixed point of every automorphism of $L$, it follows that for any
system $(f_i:i\in[k])$ of quaternary lattice terms and for any family $(\psi_{i,j}: i,j\in[k])$ of automorphisms of $L$,
\begin{equation}
\text{\eqref{eq:KrNckRdlt} holds if an only if }
f_i\bigl(\psi_{i,j}(\uid g j)\bigr)=\kdelta L i j\text{ for all }i,j\in[k].
\label{eq:hJcmnhhXtvrn}
\end{equation}

\begin{figure}[ht] 
\centerline{ \includegraphics[scale=1.0]{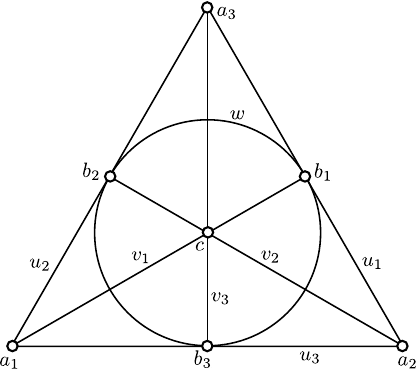}} \caption{Notations for the Fano plane}\label{fig6}
\end{figure}

Figure \ref{fig6} shows how we denote the points and the lines of the Fano plane; they belong to $L$ and $|L|=16$.
By Fact \ref{fact:nlLlslTshbr}, $\uid g 2$ is in general position. Thus, by symmetry and  \eqref{eq:hJcmnhhXtvrn}, we can assume that $\uid g 2=(a_1,a_2,a_3,w)$; see Figure \ref{fig6}.
By  Fact \ref{fact:fnlPvdlFhLWlfr} and  \eqref{eq:hJcmnhhXtvrn}, we can also assume that $\uid g 1=(a_1,a_2,a_3,c)$.
To define a subset $S$, let us agree that sets of the forms $\set{x_i:i\in[3]}$ and $\{x_{i,j}:i,j\in[3]$, $i\neq j\}$ will simply be denoted by $\set{x_i}$ and  $\set{x_{i,j}}$, respectively. These sets consist of three and six elements, respectively. With these temporary notations, we let
\begin{equation}
\begin{aligned}
S&:=\ul{\set{(a_i,a_i)}} \cup \set{(u_i,u_i)} \cup
 \set{(b_i,u_i)} \cup \set{(0,a_i)} \cr
&\cup\set{(0,b_i)} \cup
\set{(0,u_i)} \cup \set{(0,v_i)} \cup \set{(a_i,1)} \cup
\set{(b_i,1)} 
\cr
&\cup \set{(u_i,1)} \cup   \set{(v_i,1)} \cup \set{(a_i,v_i)} 
\cup  \set{(a_i,u_j)}\cr
& \cup \set{\ul{(c,w)}, (0,0),(1,1),(c,1),(0,w),(w,1),(0,1),(0,c)};
\end{aligned}
\label{eq:sBPzrvncVkwrv}
\end{equation}
the underlined terms of \eqref{eq:sBPzrvncVkwrv} will occur in \eqref{eq:kfRsZtlCGbDvgb}.
It is straightforward  to check\footnote{\label{foOt:mapLe}Alternatively, an appropriate  program in Maple V (version 5.9, 1997, Waterloo Maple Inc.) 
is presented in (the Appendix) Section \ref{sect:maple}; it is also
is available from the author's website  \href{http://tinyurl.com/g-czedli/}{http://tinyurl.com/g-czedli/} .} that $S$ is a sublattice of $L^2$. This fact and \eqref{eq:KrNckRdlt} imply that
\begin{align}
(1,0)&=\bigl(\kdelta L 1 1,\kdelta L 1 2 \bigr)=
\bigl(f_1(\uid g 1),f_1(\uid g 2)\bigr)\cr
&=
\bigl(f_1(a_1,a_2,a_3,c),\, f_1(a_1,a_2,a_3,w)\bigr)\cr
&= \bigl(f_1(a_1,a_1),f_1(a_2,a_2),f_1(a_3,a_3),f_1(c,w)\bigr)\in S,
\label{eq:kfRsZtlCGbDvgb}
\end{align}
which contradicts \eqref{eq:sBPzrvncVkwrv}. 
Hence, \eqref{eq:mVnlTfGlnm} holds even if $|F|=2$, that is, it holds for all prime fields.

\begin{figure}[ht] 
\centerline{ \includegraphics[width=0.98\textwidth]{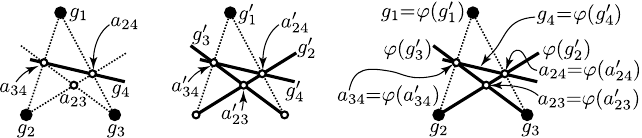}} \caption{Proving Fact \ref{fact:thNckbsRnmRsmgRmhzNf}}\label{fig5}
\end{figure}

Next, for fine types $(\xi_1,\xi_2,\xi_3,\xi_4)$ and $(\eta_1,\eta_2,\eta_3,\eta_4)$, let us say that they are \emph{complementary} if $\xi_i+\eta_i=3$ for all $i\in[4]$. \eqref{eq:TrtnCcVjmpmVr} sheds more light on this concept.

\begin{fact}\label{fact:thNckbsRnmRsmgRmhzNf} If there are $\vec g,\pvec g \in \set{\uid g i: i\in [k]}$ such that $\typ{\vvec g}=(3,1)$ and $\typ{\pvec g }=(1,3)$, then $k=2$ and, furthermore,  $\ftyp{\vvec g}$ and $\ftyp{\pvec g }$ are complementary.
\end{fact}

To show Fact \ref{fact:thNckbsRnmRsmgRmhzNf} by way of contradiction, assume that $\vec g,\pvec g \in \set{\uid g i: i\in [k]}=:\Gamma$ such that $\typ{\vvec g}=(3,1)$ and $\typ{\pvec g }=(1,3)$ but $\ftyp{\vvec g}$ and $\ftyp{\pvec g }$ are not complementary. We know from Fact \ref{fact:nlLlslTshbr} that $\vec g$ and $\pvec g$ are in general position.  Apart from permutations, $\ftyp{\vvec g}=(1,1,1,2)$ and $\ftyp{\pvec g }=(1,2,2,2)$; see Figure \ref{fig5}. 
The left of Figure \ref{fig5} shows how to define three auxiliary points; for example (in the language of $L$), $a_{24}:=(g_1\vee g_3)\wedge g_4$ and $a_{23}:=(a_{34}\vee g_3)\wedge (a_{24}\vee g_2)$; similarly for the middle of the figure. 
It is straightforward to see that if $(g_1,a_{23},a_{24},a_{34})$ 
was not in general position then neither $\vec g$ would be, and similarly for $(g'_1,a'_{23},a'_{24},a'_{34})$ in the middle of Figure \ref{fig5}. Hence, Fact \ref{fact:fnlPvdlFhLWlfr} yields an automorphism $\phi$ of $L$ such that  $\phi(g'_1)=g_1$, $\phi(a'_{23})=a_{23}$, $\phi(a'_{24})=a_{24}$, and $\phi(a'_{34})=a_{34}$;  
 see on the right of Figure \ref{fig5}.
As the figure shows, $\vec g\leq \phi(\pvec g )$, understood componentwise. In other words, $\phi^{-1}(\vvec g)\leq \pvec g$. As $\vec g$ and $\pvec g$ are in $\Gamma=\set{\uid g i: i\in[4]}$, we can assume that $\uid g 1=\vec g$ and $\uid g 2= \pvec g $. So  $\phi^{-1}(\uid g 1)\leq \uid g 2$. Hence \eqref{eq:fltskBrDskRcdlv}, with $\phi^{-1}$ instead of $\phi$, gives  contradiction.
This shows that 
\begin{equation}
\ftyp{\vvec g}\text{ and }\ftyp{\pvec g }\text{ are complementary, as required.}
\label{eq:zCnrwMvJnWBhPZ}
\end{equation}

Next, we show that for any fine type $\vec \tau$,
\begin{equation}
\text{there is at most one }\vec h\in\Gamma\text{ such that }\vec\tau=\ftyp{\vec h}.
\label{eq:fktbkPtkpFtkPg}
\end{equation}
To verify \eqref{eq:fktbkPtkpFtkPg}, we can assume that 
$\typ\tau\neq (2,2)$ since otherwise \eqref{eq:fktbkPtkpFtkPg} is clear by  \eqref{eq:prSkRdgKfStprTjn}.
So let 
$\vec h, \vec h'\in \Gamma$ such that  $\vec\tau=\ftyp{\vec h}=\ftyp{\vec h'}$; we need to show that $\vec h=\vec h'$. 
If $\vec\tau\in\set{(4,0),(0,4)}$, then $\vec h=\vec h'$ is clear by \eqref{eq:mVnlTfGlnm}. Out of the cases $\typ\tau=(3,1)$ and $\typ\tau=(1,3)$, it suffices to settle the first one since then the other follows by duality; see Fact \ref{fact:slfDlt}.  As the components of $\vec\tau$ share a symmetrical role, we can assume that $\vec\tau=\ftyp{\vec h}=(1,1,1,3)$; see Case 2 in Figure \ref{fig4} with $\vec g$ instead of $\vec h$. No problem if $|F|=2$, as $p_4$ (the fourth point on $g_4$) is not needed here.
On the right of Case 2 in the figure, the bottom left black-filled point, the bottom right black-filled point, the middle empty-filled point, and the top left empty-filled point, in this order, form a 
complete quadrangle $\vec z$. Indeed, if $\vec z$ was not in general position, then neither  $\vec h$  would be and so $\vec h$ would contradict Fact \ref{fact:nlLlslTshbr}. 
Observe that $\vec z$ determines $\vec h$. Hence, applying Fact \ref{fact:fnlPvdlFhLWlfr} to $\vec z$ and to the analogously defined quadruple determining $\vec h'$, Fact \ref{fact:fnlPvdlFhLWlfr} implies that $\vec h'=\phi(\vec h)$ for some automorphism $\phi$ of $L$. Hence, $\vec h'=\vec h$ in this case since otherwise \eqref{eq:fltskBrDskRcdlv} (with notational changes and equality instead of inequality in the middle)  would lead to a contradiction. We have shown \eqref{eq:fktbkPtkpFtkPg}. 

Next, continuing the argument for Fact \ref{fact:thNckbsRnmRsmgRmhzNf}, 
assume that $\vec h\in \Gamma$. By  \eqref{eq:prSkRdgKfStprTjn} and \eqref{eq:mVnlTfGlnm},  $\typ{\vec h}\notin\{(4,0)$, $(0,4)$, $(2,2)\}$. Hence, $\typ{\vec h}=(3,1)=\typ{\vvec g}$ or $\typ{\vec h}=(1,3)=\typ{\pvec g }$. Since $L$ is selfdual by Fact \ref{fact:slfDlt} (or since the second alternative needs almost the same treatment), we can assume that $\typ{\vec h}=(3,1)=\typ{\vvec g}$. Then $\vec h\in \Gamma$ and $\vec g\in \Gamma$ have the same role.  Hence \eqref{eq:zCnrwMvJnWBhPZ} applies to $\vec h$ and $\pvec g $, whence $\ftyp{\vec h}$ and $\ftyp{\pvec g }$ are complementary. As only one fine type is complementary to $\ftyp{\pvec g }$, we have that $\ftyp{\vec h}=\ftyp{\vvec g}$. Thus, \eqref{eq:fktbkPtkpFtkPg} yields that $\vec h=\vec g$. 
So $\vec h=\vec g\in \set{\vec g,\pvec g }$, implying that $k=2$ and completing the proof of Fact \ref{fact:thNckbsRnmRsmgRmhzNf}.

Next, assume that $k>2$. We know from \eqref{eq:prSkRdgKfStprTjn} and  \eqref{eq:mVnlTfGlnm} that, for all $i\in[k]$,  $\typ{\uid g k}\notin\set{(4,0),\,(2,2),  \,(0,4)}$. So $\typ{\uid g 1}\in\set{(3,1),\,(1,3}$. By duality, we can assume that $\typ{\uid g 1}=(3,1)$. 
As  Fact \ref{fact:thNckbsRnmRsmgRmhzNf} together with $k>2$ exclude that $\typ{\uid g i}=(1,3)$ for some $i\in[k]\setminus\set 1$, 
we have that $\typ{\uid g i}=(3,1)$ for all $i\in[k]$.
Hence, for every $i\in[k]$, $\ftyp{\uid g i}$ is one of the fine types $(1,1,1,2)$, $(1,1,2,1)$, $(1,2,1,1)$, and $(2,1,1,1)$. 
Since each of these four fine types occurs at most once by \eqref{eq:fktbkPtkpFtkPg}, it follows that $k\leq 4$, proving \eqref{eq:lfGjbnfbhhM}.

Clearly,  \eqref{eq:lfGjbnfbhhM}, the first inequality in \eqref{eq:nKmZfRtdrCh},  and  the particular $(t,d)=(0,3)$ case of (the already proven) \eqref{eq:fDnKnCspJhb} and  \eqref{eq:lfGjbnfbhhM} imply  \eqref{eq:mtPhfGnprWmt}.

Next, interrupting the proof of Theorem \ref{thmk}, we recall 
and, for the reader's convenience, prove the following lemma; its first part follows from known deep results.

\begin{lemma}[Day and Pickering \cite{daypick}, Herrmann \cite{herrmann84}, Hermmann and Huhn \cite{herrmannhuhn}]\label{lemma:mGszRzbTr}
Every complete quadrangle $\vec p=(p_1,p_2,p_3,p_4)$ in $\psp 2$ (the projective plane over the \emph{prime} field $F$) is a generating vector of $L=\Sub{\psp 2}$. So is 
every quadruple $\vec q$ in general position such that $\typ{\vvec q}\neq (2,2)$.
\end{lemma}

In the context of this paper, the proof of Lemma \ref{lemma:mGszRzbTr} is straightforward and, what is important in Section \ref{sect:append-ZLprfn}, it does not rely on  Gelfand and Ponomarev's result, which was mentioned after \eqref{eq:RsPlWrQn}. Here, we provide a concise demonstration.
\eqref{eq:TrtnCcVjmpmVr} shows that  the assumption $\typ{\vvec q}\neq (2,2)$ cannot be omitted from the lemma.

\begin{figure}[ht] 
\centerline{ \includegraphics[width=0.98\textwidth]{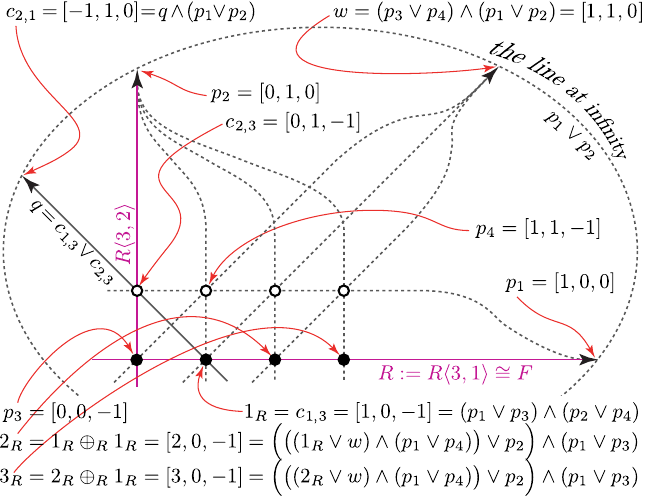}} \caption{Generating the (subspace lattice of the) projective plane}\label{figipplndmnd}
\end{figure}

\begin{proof}[Proof of Lemma \ref{lemma:mGszRzbTr}]
Let $\vec p$ be a complete quadrangle. By Fact \eqref{fact:fnlPvdlFhLWlfr}, we can assume that $\vec p$ is the canonical complete quadrangle; see Figure \ref{figipplndmnd}. Let $S:=\latgen{p_i:i\in[4]}$. The figure shows  
that the elements  of the canonical von Neumann $3$-frame, $a_i:=p_i$ for $i\in [3]$ and $c_{i,j}=c_{j,i}$ for $i\neq j\in[3]$, are in $S$. In particular, 
$1_{\coring 3 1}=c_{1,3}\in S$. As $\coring 3 1\cong F$ by Theorem \ref{thm:kgSzsRtlBkkmHdkg}, $\coring 3 1$ is a prime field and so it is generated by $1_{\coring 3 1}$. Therefore, since $S$ is closed with respect to the field operations by \eqref{eq:lNgtZmkrkm}, $\coring 3 1 \subseteq S$. In virtue of \eqref{eq:znTmvzhDzBtT}, we can apply \eqref{eq:lHntlkdnGnfhBln} to conclude that $S=L$, as required.
This proves the first half of Lemma \ref{lemma:mGszRzbTr}.

To show the second half,   \eqref{eq:TrtnCcVjmpmVr}, the first half of Lemma \ref{lemma:mGszRzbTr}, and duality allow us to assume that $\typ{\vvec q}=(3,1)$. We can assume that 
$q_1,q_2,q_3$ are points and $q_4$ is a line. 
Letting $\vec q$ play the role of $\vec g$ on the left of Figure \ref{fig5}, we obtain that $\set{a_{24},a_{34}}\subseteq \latgen{q_1,\dots,q_4}=:S$. So $S$ contains a complete quadrangle, $(q_2,q_3,a_{24},a_{34})$, whereby the first part of the lemma implies that $S=L$, as required. 
We have proved Lemma \ref{lemma:mGszRzbTr}. 
\end{proof}

To complete the proof of Theorem \ref{thmk},
we need to show \eqref{eq:ktTlTkvrglLk}. With its assumptions, if $\mng {L^k}\leq 3$, then Remark \ref{rem:sbrStvgnBx} would give that $\mng L\leq 3$, contradicting \eqref{eq:nKmZfRtdrCh}.
Hence, $\mng {L^k}\geq 4$. By Remark \ref{rem:sbrStvgnBx}, it suffices to prove that $L^4$ has a $4$-element generating set. Let $e$ be a line and 
$a,b,c$ be three non-collinear points of the projective plane 
such that none of these points lies on $e$. Then the quadruple $(e,a,b,c)$ is in general position;  think of the left of Figure \ref{fig5} and $(e,a,b,c,e):=(g_4,g_1,g_2,g_3)$.) 
Keeping the explanatory sentence right after Lemma \ref{lemma:fGtln} in mind, take the matrix
\begin{equation*} U= (u_{i,j})_{4\times 4}:=
\begin{pmatrix}
e & a & b & c \cr
a & e & b & c \cr
a & b & e & c \cr
a & b & c & e \cr
\end{pmatrix} ,
\end{equation*}
and let $\uid g i=(u_{i,1}, u_{i,2}, u_{i,3}, u_{i,4})$ be the $i$-th row of $U$ for $i\in[4]$. 
With $\vxi=(\xi_1$, $\xi_2$, $\xi_3$, $\xi_4)$ as a vector of variables, define the following quaternary lattice terms for $i,j\in [4]$, $i\neq j$:
\begin{align}
w_i(\vvxi &:=\bigwedge_{j\in [4]\setminus \set{i}} (\xi_i\vee \xi_j), \cr
\hij i j(\vvxi &:=\xi_j\wedge\bigwedge_{s\in[4]\setminus\set{i,j}}\bigl(w_i(\vvxi \vee \xi_s \bigr), \text{ and}\cr
\fer_i(\vvxi &:=\bigvee_{j\in [4]\setminus \set{i}}
\hij i j(\vvxi .
\label{eq:bwnRblmRDcGlh}
\end{align}
The superscript (e) of $f_i$ will be a useful reminder later. Some substitution values of these terms are given as follows:
\[
\begin{tabular}{l|r|r|r|r|r|r|r|r|r|r|r|r|r|r}
$\xi_1$ & $\xi_2$ & $\xi_3$ & $\xi_4$ & $w_1(\vvxi$ & $\hij 1 2(\vvxi$ & $\hij 1 3(\vvxi$ & $\hij 1 4(\vvxi$ & $\fer_1(\vvxi$   \cr
\hline\hline
$e$&$a$&$b$&$c$   &$1$ &$a$ &$b$ &$c$ & $1$\cr
\hline
$a$&$e$&$b$&$c$   &$a$ &$0$ &$0$ &$0$ & $0$\cr
\hline
$a$&$b$&$e$&$c$   &$a$ &$0$ &$0$ &$0$ & $0$\cr
\hline
$a$&$b$&$c$&$e$   &$a$ &$0$ &$0$ &$0$ & $0$\cr
\hline
\end{tabular}
\]
The last column above shows that $\fer_1(\uid g j)=\kdelta L 1 j$.
By symmetry or by three additional similar tables, 
\begin{equation}
\fer_i(\uid g j)=\kdelta L i j\text{ holds for all }i,j\in[4]. 
\label{eq:vtVnmNtfgTRnsjbh}
\end{equation}
Note for later reference that all we needed to prove \eqref{eq:vtVnmNtfgTRnsjbh} is only that 
\begin{equation}
\ftyp{a,b,c,e}=(1,1,1,2)\text{ and }(a,b,c,e)\text{ is in general position.}
\label{eq:GttKdKrcZRspRn}
\end{equation}
By  \eqref{eq:vtVnmNtfgTRnsjbh}, Condition \eqref{lemma:bfGtln} of Lemma \ref{lemma:fGtln} holds. So does  Condition \eqref{lemma:afGtln} of the same lemma by the second half of Lemma \ref{lemma:mGszRzbTr}.
Thus, the columns of $U$ form a $4$-element generating set of $L^4$ by Lemma \ref{lemma:fGtln},  completing the proof of \eqref{eq:ktTlTkvrglLk} and that of  Theorem \ref{thmk}.
\end{proof}

\section{Proving Theorem \ref{thm:prd} and Example \ref{expl:mZvsDnStSg}}\label{sect:cproof}

\begin{proof}[Proof of Theorem \ref{thm:prd}] 
If $\lambda$ is infinite, then $|L|=2^{\aleph_0}$ and so $L$ is not finitely generated. (In fact, it is not even $\aleph_0$-generated.) 
If a prime field $F$ occurred at least five times in the direct product \eqref{eq:tSvnRnkDglhlg} and $L$ was $4$-generated, then $\Sub{\vecspace F 3}^5$   would also be $4$-generated by Remark \ref{rem:sbrStvgnBx}, contradicting \eqref{eq:mtPhfGnprWmt}. Thus, the condition right after \eqref{eq:tSvnRnkDglhlg}  is necessary. 
The rest of the proof assumes this condition. We need to prove that $\mng L=4$. In fact, it suffices to find an at most 4-element generating set since the assumption $\lambda\neq 0$ together with \eqref{eq:nKmZfRtdrCh}  and Remark \ref{rem:sbrStvgnBx} imply that $\mng L\geq 4$. 
Furthermore, by Remark \ref{rem:sbrStvgnBx} again, we can assume that each prime field occurs exactly four times. 
So, taking \eqref{eq:mtzTbnkStmlDsl} also into account, we assume that 
\[
L=\prod_{i\in [k]}\prod_{\nu\in[4]}L_{i,\nu}
\text{, where }L_{i,\nu}=\Sub {\psp 2{(F_i)}},\ \ F_i\ncong F_j
\]
for $i\neq j$, and we  construct an (at most) 4-element generating set of $L$. 

For $i\in [k]$, let $\inud i p_1$,  $\inud i p_2$, $\inud i p_3$, and  $\inud i p_4$ be the points (and also the atoms in the corresponding subspace lattice)
$[1,0,0]$, $[0,1,0]$, $[0,0,-1]$, and  $[1,1,-1]$ in the projective plane $\ipsp i:=\psp2(F_i)$ over $F_i$, respectively; see Figure \ref{figipplndmnd} where the superscript $i$ is never indicated. 
Let $\inud i c_{2,3}:=(\inud i p_1\vee \inud i p_4)\wedge (\inud i p_2\vee \inud i p_3)$.   
Figure \ref{figipplndmnd} shows how we define $\inud i c_{1,3}$, $\inud i c_{2,1}$, and (for later use) $\inud i w$. We let 
\[
\inud i q:=\inud i c_{1,3}\vee \inud i c_{2,3} \  \text{ and }\    
\iud i r:=(\inud i p_1,\inud i p_2,\inud i p_3,\inud i q).
\]
Figure \ref{figipplndmnd} shows and it is easy to verify that 
\begin{equation}
\inud i p_4=\Bigl(\bigl((\inud i p_1\vee \inud i p_3)\wedge \inud i q\bigr)\vee \inud i p_2\Bigr) \wedge
\Bigl(\bigl((\inud i p_2\vee \inud i p_3)\wedge \inud i q\bigr)\vee \inud i p_1\Bigr). 
\label{eq:hvnBFjfktkpprhXbj}
\end{equation}
For $i\in[k]$, we define the following four quadruples:
\begin{align}
&\biud i 1 r:=(\inud i q,\inud i p_1,\inud i p_2,\inud i p_3),
&&\biud i 2 r:=(\inud i p_1,\inud i q,\inud i p_2,\inud i p_3)
\label{eq:armLpfRskjkZ}\\
&\biud i 3 r:=(\inud i p_1,\inud i p_2,\inud i q,\inud i p_3),
&&\biud i 4 r:=(\inud i p_1,\inud i p_2,\inud i p_3,\inud i q)=\iud i r.
\label{eq:brmLpfRskjkZ}
\end{align}
Form a $([k]\times[4])$-by-$4$ matrix from these vectors as row vectors. 
So the rows of this matrix are indexed by pairs taken from $[k]\times[4]$ and there are four columns. The $(i,\nu)$-th row of the matrix is $\biud i\nu r$.
We claim that the four columns of the matrix generate $L$. To prove this, we need to verify both conditions given in Lemma \ref{lemma:fGtln}. The satisfaction of Condition
\eqref{lemma:afGtln} of Lemma \ref{lemma:fGtln} follows from 
the second half of  Lemma \ref{lemma:mGszRzbTr}; it also follows from \eqref{eq:hvnBFjfktkpprhXbj} and the first half of  Lemma \ref{lemma:mGszRzbTr}.

Let $\vxi$ stand for the vector $(\xi_1,\xi_2,\xi_3,\xi_4)$ of variables.
To show that Condition  \eqref{lemma:bfGtln} of  Lemma \ref{lemma:mGszRzbTr} also holds and to complete the proof of the theorem, 
it suffices to define quaternary lattice terms $\finu i\nu=\finu i\nu(\vvxi$  for $(i,\nu)\in[k]\times[4]$ such that for any $(j,\kappa)\in[k]\times[4]$,
\begin{equation}
\finu i\nu(\biud j\kappa r)=
\begin{cases}
1_{L_j},\text{ if }(j,\kappa)=(i,\nu),\cr
0_{L_j},\text{ if }(j,\kappa)\neq(i,\nu).
\end{cases}
\label{eq:kzslgVczn}
\end{equation}
The term $\finu i\nu$ that we define is of the form
\begin{equation}
\finu i\nu(\vvxi := \ginu i\nu(\vvxi \wedge \fer_\nu(\vvxi\text{,  where }\fer_\nu \text{ is taken from }\eqref{eq:bwnRblmRDcGlh}.
\label{eq:nJfrlglSnX}
\end{equation}
(The superscript ``(e)" in \eqref{eq:nJfrlglSnX} comes from ``earlier''.)
Note that almost all of the terms we define in the rest of the proof are quaternary terms on $\vxi$ but $\vxi$ will often be dropped.
As the components in \eqref{eq:armLpfRskjkZ}--\eqref{eq:brmLpfRskjkZ} are permuted cyclically, we do the same with the variables of $\ginu i\nu$. 
So we define, in several steps, $\gif$;  then, in harmony with  \eqref{eq:armLpfRskjkZ}--\eqref{eq:brmLpfRskjkZ}, the rest of the terms $\ginu i\nu$ are given by the following rules:
\begin{align}
\ginu i 1(\vvxi &:=\gif(\xi_4,\xi_1,\xi_2,\xi_3),\label{eq:aprmtlGt}\\
\ginu i 2(\vvxi &:=\gif(\xi_1,\xi_4,\xi_2,\xi_3), \text{ and}\label{eq:bprmtlGt}\\
\ginu i 3(\vvxi &:=\gif(\xi_1,\xi_2,\xi_4,\xi_3),\label{eq:cprmtlGt}
\end{align}

Keeping an eye on Figure \ref{figipplndmnd}, $R=\inud iR=:\coring 3 1$ will also stand for $F_i$. In the figure, $\inud i 0_R:=0_{\inud i R}$, $\inud i 1_R=c_{1,3}$, $\inud i 2_R=[2,0,-1]$, and $\inud i 3_R=[3,0,-1]$ are already given. (As we have already mentioned, $i$ is not indicated in the figure.)
For all $s\in\Nplu$, we 
 defined $\inud i s_R\in L_i$ by induction as follows:
\begin{gather}
\inud i {(s+1)}_R := \inud i s_R\oplus_R \inud i 1_R 
=\Bigl(\bigl(
(\inud i s_R\vee \inud i w)\wedge (\inud i p_1\vee \inud i p_4)
\bigr)
\vee \inud i p_2\Bigr)\wedge (\inud i p_1\vee \inud i p_3).
\label{eq:fGlgmtSwHtlp}\\
\text{Clearly, for all }s\in\Nplu,\text{ we have that }
\inud i s_R=[s,0,-1]\in L_i;
\label{eq:sChphgLsrLpv}
\end{gather}
this follows also from Theorem \ref{thm:kgSzsRtlBkkmHdkg}.
When defining lattice terms for a given  $i\in [k]$, 
$\ater i c$ and $\bter i c$ denote terms closely related to a point $c\in\ipsp i$; we usually drop $i$ if such a term does not depend on it. 
First, to get rid of $\inud i p_4$ and bring $\inud i q$ in, we replace $\inud i p_4$ with the right-hand side of \eqref{eq:hvnBFjfktkpprhXbj} in every expression in Figure \ref{figipplndmnd}. In harmony with \eqref{eq:hvnBFjfktkpprhXbj},   \eqref{eq:fGlgmtSwHtlp}, and Figure \ref{figipplndmnd}, we let
\begin{align}
&\nater p_4=\nater p_4(\vvxi:=\Bigl(\bigl((\xi_1\vee \xi_3)\wedge \xi_4\bigr)\vee \xi_2\Bigr) \wedge
\Bigl(\bigl((\xi_2\vee \xi_3)\wedge \xi_4\bigr)\vee \xi_1\Bigr), 
\cr
&\nater w=\nater w(\vvxi:=(\xi_3\vee \nater p_4)\wedge (\xi_1\vee \xi_2),\,\,\  \nater p_\nu=\nater p_\nu(\vvxi:=\xi_\nu\text{ for }\nu\in[3],
\label{eq:bkspZdsDkKhRdtn}\\
&\nater 0=\nater 0(\vvxi:=\xi_3,\,\text{ and for }s\in \Nnul,
\label{eq:ckspZdsDkKhRdtn}
\\
&\nater {(s+1)}=\nater {(s+1)}(\vvxi\cr
&\phantom{\nater {(s+1)}} :=\Bigl(\bigl(
(\nater s\vee \nater w)\wedge (\xi_1\vee \nater p_4) \bigr)
\vee \xi_2\Bigr)\wedge (\xi_1\vee \xi_3).
\label{eq:dkspZdsDkKhRdtn}
\\
&\text{Let }\,
\nater c_{1,3}=\nater c_{1,3}(\vvxi:=\nater 1
\text{ and }\,
\nater c_{2,3}=\nater c_{2,3}(\vvxi:=\nater 1(\xi_2,\xi_1,\xi_3,\xi_4).\ 
\label{eq:ekspZsPrdsDkhRdtn}
\end{align}
So $\nater 0, \nater 1,\nater 2,\dots$ are lattice terms, not numbers. Comparing \eqref{eq:fGlgmtSwHtlp}, \eqref{eq:sChphgLsrLpv},  \eqref{eq:ckspZdsDkKhRdtn}, and \eqref{eq:dkspZdsDkKhRdtn}, we obtain that for all $j\in [k]$ and $s\in\Nnul$,  
\begin{equation}
\nater s(\iud j r)=[r, 0, -1]=:\inud j r_R \in L_j.
\label{eq:mNpvpGnRdbBn}
\end{equation}
By construction and since the subscripts 1 and 2 share a symmetrical role,  for any $j\in[k]$  and $\iota\in[4]$,
\begin{equation}
\nater p_\iota(\iud j r)=\inud j p_\iota,\ 
\nater w(\iud j r)=\inud j w,\ 
\nater c_{1,3}(\iud j r)= \inud j c_{1,3}
,\ 
\nater c_{2,3}(\iud j r)= \inud j c_{2,3}.
\label{eq:ksFhmHdfBtBrz}
\end{equation}
To define further terms, we need to distinguish between two cases.

First, assume that $t_i:=|F_i|$ is a prime number. We let 
\begin{align}
&\bter i p_3=\bter i p_3(\vvxi:=\nater p_3 \wedge \nater  {(t_i)},
\label{eq:h3wxWxTr}
\\
&\bter i p_1=\bter i p_1(\vvxi:=\ater i p_1\wedge(\bter i p_3 \vee \ater i p_{2} \vee \ater i p_{4}),
\cr
&\bter i p_2=\bter i p_2(\vvxi:=\ater i p_2\wedge(\bter i p_3 \vee \ater i p_{1} \vee \ater i p_{4}),\text{ and}
\cr
&\bter i p_4=\bter i p_4(\vvxi:=\ater i p_4\wedge(\bter i p_3 \vee \ater i p_{1} \vee \ater i p_{2}).
\notag 
\end{align}
We claim that for all  $\iota\in[4]$ and $j\in [k]$, 
\begin{equation}\text{in the lattice } L_j,\quad
\bter i p_\iota(\iud j r)=\begin{cases}
\inud j p_\iota,&\text{ if }j=i,\cr
0_{L_j}&\text{ if }j\neq i.
\end{cases}
\label{eq:gThkFtmgMtBlsK}
\end{equation}
To show this, observe that we know from \eqref{eq:mNpvpGnRdbBn} and \eqref{eq:ksFhmHdfBtBrz} that  both $\nater p_3 (\iud j r)=\inud j 0_R$ and $\nater  {(t_i)}(\iud j r)=\inud j {(t_i)}_R$ are points on the solid (magenta) horizontal line $\inud j p_3\vee \inud j p_1$ in Figure \ref{figipplndmnd}. If $j\neq i$, then $F_j\ncong F_i$,
$\inud j 0_R \neq \inud j {(t_i)}_R$,  and  the meet of these two distinct points is $\bter i p_3(\iud j r)=\emptyset=0_{L_j}$. If $j=i$, then 
$\inud j 0_R$ and $\inud j {(t_i)}_R$ are equal, whereby their meet is  $\bter i p_3(\iud j r)=\inud j 0_R =\inud j p_3$. This shows the validity of \eqref{eq:gThkFtmgMtBlsK} for $\iota=3$. 
Based on \eqref{eq:ksFhmHdfBtBrz} and Figure \ref{figipplndmnd}, we conclude  \eqref{eq:gThkFtmgMtBlsK}  from its particular case $\iota=3$.

Second, we assume that $F_i=\mathbb Q$, the field of rational numbers. Everything goes in the very same way as in the previous case when $F_i$ was finite except that \eqref{eq:h3wxWxTr} and the corresponding argument for the $\iota=3$ case of \eqref{eq:gThkFtmgMtBlsK} need some modifications. As a preparation to this task, with self-explanatory substitutions and using the terms \eqref{eq:bkspZdsDkKhRdtn}--\eqref{eq:ekspZsPrdsDkhRdtn}, we turn
 \eqref{eq:vzvmgBrnplF} with $(i,j,k)=(3,1,2)$ into the quinary lattice term
\begin{equation*}
\begin{aligned}
\nater{\recip 3 1 2}(x,\vvxi:= \Bigl(&\Bigl(\Bigl(\bigl((x\vee \nater c_{2,3})\wedge(\nater p_1\vee \nater p_2)\bigr)\vee \nater c_{1,3}\Bigr)\Bigr) 
\cr
&\wedge (\nater p_2\vee \nater p_3)\vee \nater c_{2,1} \Bigr) \wedge(\nater p_3\vee \nater p_1).
\end{aligned}
\end{equation*}
With  $T:=\{|F_j|: j\in[k]$ and $F_j$ is finite$\}$, let  
\begin{equation}
\bter i p_3=\bter i p_3(\vvxi:=\nater p_3\wedge\bigwedge_{t\in T}\Bigl(\nater p_1 \vee \nater{\recip 3 1 2}(\nater  t\bigl(\vvxi\bigr) \Bigr).
\label{eq:kTdnCTlnQGdnw}
\end{equation}
We claim that \eqref{eq:gThkFtmgMtBlsK} for $\iota=3$ still holds. 
If $i=j$, then  $F_j\cong \mathbb Q$ and for every $t\in T$, $\nater t(\iud j r)=\inud j t_R$ is not the zero element of $\inud j R\cong \mathbb Q$ by \eqref{eq:mNpvpGnRdbBn}. Hence, Lemma \ref{lemma:RcPrkWvk} implies that $\nater{\recip 3 1 2}(\nater  t(\iud j r))= \nater{\recip 3 1 2}(\inud j t_R)=
\inud j{(1/t)}_R$ belongs to $\inud j R$. In particular, $\inud j{(1/t)}_R$ is distinct from $\inud j p_1$, the infinite point of the (solid magenta) horizontal axis. 
This fact and the first equality in \eqref{eq:ksFhmHdfBtBrz} yield that the join in \eqref{eq:kTdnCTlnQGdnw} turns into $\inud j p_1\vee \inud j{(1/t)}_R$, which is  the (magenta) solid horizontal line in Figure \ref{figipplndmnd}. As this line contains $\nater p_3(\iud j r)=\inud j p_3$, we have that  $\bter i p_3(\iud j r)=\inud j p_3$ for $j=i$, as required.

Now let us examine what happens if $j\neq i$. 
Then the prime number $t:=|F_j|$ is in $T$ and the join 
$\nater p_1 \vee \nater{\recip 3 1 2}(\nater  t\bigl(\vvxi\bigr)$ is one of the meetands in \eqref{eq:kTdnCTlnQGdnw}. By  \eqref{eq:ksFhmHdfBtBrz},  $\nater  t(\iud j r)=\inud j t_R=\inud j 0_R=\inud j p_3$. We know from Lemma \ref{lemma:RcPrkWvk} that 
 $\recip  3 1 2(\inud j p_3)$ is $\inud j p_1$. Thus, using  \eqref{eq:ksFhmHdfBtBrz} again,  the meetand $\nater p_1 \vee \nater{\recip 3 1 2}(\nater  t\bigl(\vvxi\bigr)$ turns into $\inud j p_1\vee \inud j p_1=\inud j p_1$ when $\iud j r$ is substituted for $\vxi$. Since $\nater p_3$ turns into $\inud j p_3$ after the substitution and $\inud j p_3\wedge \inud j p_1=0_{L_j}$, we have that 
$\bter i p_3(\iud j r)=0_{L_j}$, as required.
We have shown that \eqref{eq:gThkFtmgMtBlsK} for $\iota=3$ still holds. Based on \eqref{eq:ksFhmHdfBtBrz}, we conclude  \eqref{eq:gThkFtmgMtBlsK}  from its particular case $\iota=3$. 

We have seen that  not matter if $F_i$ is finite or not,  \eqref{eq:gThkFtmgMtBlsK} holds for all $i,j\in[k]$ and $\iota\in[4]$.
This allows us to let
\begin{align}
&\gif(\vvxi:=\bigvee_{\iota\in[4]}\bter i p_\iota(\vvxi; \label{eq:bmTnkntJstRhmstrgtkr}\\
&\text{ then \eqref{eq:aprmtlGt}, \eqref{eq:bprmtlGt}, and \eqref{eq:cprmtlGt} define }\ginu i\nu(\vvxi\text{ for }\nu\in[3].
\notag
\end{align}
Since the ``rotational symmetry''  of \eqref{eq:armLpfRskjkZ}--\eqref{eq:brmLpfRskjkZ} and that of 
\eqref{eq:aprmtlGt}, \eqref{eq:bprmtlGt}, and \eqref{eq:cprmtlGt} 
correspond to each other, it suffices to verify \eqref{eq:kzslgVczn} only for $\nu=4$. So we are examining $\fif(\biud j\kappa r)=\gif(\biud j\kappa r)\wedge \fer_4(\biud j\kappa r)$; see \eqref{eq:nJfrlglSnX}.

First, assume that $(j,\kappa)=(i,4)$. 
Then the definition of $\fer_4$ in \eqref{eq:bwnRblmRDcGlh} does not depend on the underlying field and neither the argument showing \eqref{eq:vtVnmNtfgTRnsjbh} does, whence it follows from \eqref{eq:vtVnmNtfgTRnsjbh} and \eqref{eq:GttKdKrcZRspRn} that  $\fer_4(\biud j\kappa r)= \fer_4(\iud i  r) = 1_{L_i}=1_{L_j}$. 
All the joinands in \eqref{eq:bmTnkntJstRhmstrgtkr} are the respective points by  \eqref{eq:gThkFtmgMtBlsK}. As these points are in general position, we have that $\gif(\biud j\kappa r)=1_{L_j}$.  Thus, $\fif(\biud j\kappa r)=1_{L_j}$, as \eqref{eq:kzslgVczn} requires. 
Next, assume that $(j,\kappa)\neq (i,4)$. If $\kappa\neq 4$, then  \eqref{eq:vtVnmNtfgTRnsjbh} and \eqref{eq:GttKdKrcZRspRn} give that   $\fer_4(\biud j\kappa r)=0_{L_j}$, implying that $\fif(\biud j\kappa r)=0_{L_j}$, as required.
If $j\neq i$, then \eqref{eq:gThkFtmgMtBlsK} implies that all the joinands in \eqref{eq:bmTnkntJstRhmstrgtkr}
turn into $0_{L_j}$ when $\biud j\kappa r$ is substituted for $\vxi$, whereby $\gif(\biud j\kappa r)=0_{L_j}$ and so $\fif(\biud j\kappa r)=0_{L_j}$ again, as required. Now that we have proved  \eqref{eq:kzslgVczn}, the proof of Theorem \ref{thm:prd} is complete.
\end{proof}

\begin{proof}[Proof of Remark \ref{rem:ncdGnnJhNkszN}]
It suffices to exclude that $F=\mathbb Q(u)$ for some $u\in F$. Suppose the contrary and pick such a 
$u$. Then $u$ is transcendental and $\sqrt[80]{80}=f(u)/g(u)$ for some polynomials $f\in \mathbb Q[x]$ and $g\in \mathbb Q[x]\setminus\set 0$. 
Since $u$ is a root of the polynomial $f(x)^{80}-80 g(x)^{80}\in\mathbb Q[x]$, this polynomial is $0$. Hence, with  a $q\in\mathbb Q$ such that $g(q)\neq 0$,  $80=(f(q)/g(q))^{80}$. Thus  $\sqrt[80]{80}=f(q)/g(q)\in\mathbb Q$, which is a contradiction, as required. 
\end{proof}

\begin{proof}[Proof of Example \ref{expl:mZvsDnStSg}]
By the well-known multiplicativity of degrees and the primitive element theorem, see for example  Milne \cite[Proposition 1.20 and Theorem 5.1]{milne}, $F$ in Part (a) is $t=1$-generated. Hence,  Part (a) follows 
from \eqref{eq:nKmZfRtdrCh} and  \eqref{eq:fDnKnCspJhb}.
As the elements $\beta_i$ are independent, $t:=\mng F$ in Part (b) equals 80 by the fundamental theorem on transcendence bases; see for example Theorem 9.5 in Milne \cite{milne}. Therefore, \eqref{eq:nKmZfRtdrCh} and  \eqref{eq:fDnKnCspJha} imply Part (b). 
\eqref{eq:nKmZfRtdrCh} and \eqref{eq:fDnKnCspJhb} imply  Part (c).
To verify Part (d) for $|F|=19$, note that $k=10^{2046}$ is smaller than $\mu$ in \eqref{eq:ckHlrgjCnpzgN} by 
Table \ref{table:szbklngy}. If $F=\mathbb Q$, then $k\leq\mu=\aleph_0$ is trivial. Hence, $L^k$ is $5$-generated by \eqref{eq:fDnKnCspJhb}.
Since $\mng{\mathbb A}=\aleph_0$,   \eqref{eq:fDnKnCspJha} implies Part (e).
Finally, even without Remark \ref{rem:ncdGnnJhNkszN}, Part (f) follows from \eqref{eq:nKmZfRtdrCh} and \eqref{eq:fDnKnCspJhb} since $t=\mng F\in\set{1,2}$.
\end{proof}

\section{Appendix: Extracting Gelfand and Ponomarev's result from Z\'adori's proof}\label{sect:append-ZLprfn}

A lot in this paper depends on Gelfand and Ponomarev's theorem:

\begin{theorem}[Gelfand and Ponomarev \cite{gelfand}]\label{thm:gelfand}
If $3\leq n\in\Nplu$, $K$ is a prime field, and  $V=K^n$ is the $n$-dimensional vector space over $K$, then the subspace lattice $L(K^n):=\Sub V$ has a $4$-element generating set.
\end{theorem}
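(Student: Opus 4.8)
The plan is to deduce Theorem~\ref{thm:gelfand} from the already-proven \eqref{eq:lHntlkdnGnfhBln}; by \eqref{eq:znTmvzhDzBtT} the latter was established with no appeal to Gelfand and Ponomarev's result, so no circularity arises. Writing $d:=n$ and passing through the isomorphism \eqref{eq:mtzTbnkStmlDsl}, I work in $L=\Sub{\psd(K)}$ with the canonical $d$-frame $\vec f$ of \eqref{eq:sLskPnspTfXskQv}--\eqref{eq:tdrmBchSnnRmNb}. Recall from \eqref{eq:mDgTpcwKllRNsTSW}--\eqref{eq:lHntlkdnGnfhBln} that the sublattice generated by the components of $\vec f$ together with the whole coordinate ring $\coring d 1$ is all of $L$. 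Hence it suffices to produce a four-element set $G\subseteq L$ for which $\latgen G$ contains every component of $\vec f$ and the ring $\coring d 1$.

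The coordinate ring comes for free once the frame is in hand. Since $K$ is a prime field, $\coring d 1\cong K$ (Theorem~\ref{thm:kgSzsRtlBkkmHdkg}) is generated as a field by its unit $c_{1,d}$, and $c_{1,d}$ is itself a component of $\vec f$. Moreover, by \eqref{eq:lNgtZmkrkm} the ring operations of \eqref{eq:hGkfmVtLvKtlc}--\eqref{eq:hsTrnbNsRlZlpN} and the reciprocal of Lemma~\ref{lemma:RcPrkWvk} are lattice terms in $\vee$, $\wedge$, and the components of $\vec f$. Therefore any sublattice that contains all components of $\vec f$ is closed under the field operations of $\coring d 1$ and contains the field generator $c_{1,d}$, so it contains $\coring d 1$. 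Consequently, as soon as $\latgen G$ contains every $a_i$ and every $c_{i,j}$, it contains $\coring d 1$, and then \eqref{eq:lHntlkdnGnfhBln} forces $\latgen G=L$. This reduces the theorem to the geometric task of recovering the entire frame $\vec f$ from four elements of $L$.

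For the remaining geometric task I would follow Z\'adori \cite{zadori2}. For an arbitrary finite field $F$ and every $d\geq 3$, his proof exhibits a five-element generating set of $\Sub{\vecspace F d}$ in which four of the subspaces serve to recover, by lattice terms, all frame components $a_i$ and $c_{i,j}$, while the fifth subspace only encodes a field generator of $F$ --- this is exactly where his hypothesis $\mng F=1$ is used. When $F=K$ is prime we have $\mng K=0$, no field generator need be supplied, and the fifth subspace is superfluous: the four geometric subspaces already generate the frame. Taking $G$ to consist of these four subspaces and transcribing Z\'adori's recovery of the $a_i$ and $c_{i,j}$ into the present frame-based notation finishes the argument. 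I expect this last step to be the main obstacle, since it is the genuinely geometric heart of Z\'adori's construction; for $d=3$ it is already available in the form of Lemma~\ref{lemma:mGszRzbTr}, where a single complete quadrangle recovers the $3$-frame.
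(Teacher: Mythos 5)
Your reduction is sound and non-circular: by \eqref{eq:znTmvzhDzBtT} the equality \eqref{eq:lHntlkdnGnfhBln} is available, and for a prime field the coordinate ring $\coring d 1$ is generated by its unit $c_{1,d}$, which is a frame component, so any sublattice containing the whole frame is all of $L$. This is precisely how the paper settles the base case $n=3$, via Lemma \ref{lemma:mGszRzbTr}. The problem is that for $n>3$ you have reduced the theorem to ``recover the entire $d$-frame from four subspaces'' and then deferred that entire step to ``transcribing Z\'adori,'' while acknowledging it is the main obstacle. That step is the whole content of the theorem for $n>3$, and as written it is missing. Moreover, your description of how Z\'adori's argument would supply it is not accurate: neither Z\'adori's proof nor the paper's version of it recovers the full $d$-frame from the four subspaces by lattice terms. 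Instead, the paper argues by induction on $n$. It first proves the auxiliary statement \eqref{eq:vSbWstcvlCtHlCrwld}, that for a coordinate hyperplane $H_i$ and a subspace $G\nsubseteq H_i$ with $\dim G\geq 2$, the union of principal ideals $\idl{G}\cup\idl{H_i}$ generates $L(K^n)$; it then shows that suitable lattice terms in the four subspaces $\nuid t n_1,\dots,\nuid t n_4$ produce, inside $\latgen{\nuid T n}$, copies of the four generating subspaces of $L(K^{n-1})$ (and, for odd $n$, also of $L(K^{n-2})$) sitting in appropriate hyperplanes, so that the induction hypothesis yields $\idl{H_i}\subseteq\latgen{\nuid T n}$ and \eqref{eq:vSbWstcvlCtHlCrwld} finishes the step. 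None of this appears in your proposal, and it is not a routine transcription of the $d=3$ case.

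A second, smaller inaccuracy: the fifth subspace in Z\'adori's construction does not ``only encode a field generator'' that can simply be discarded over a prime field. The field generator $c$ is a parameter occurring inside the defining equations of his subspaces, and specializing $c:=1$ makes $t_4$ and $t_5$ coincide, which is how five subspaces collapse to four. So the correct statement is that the specialization merges two of the five generators, not that one of them becomes superfluous and the remaining four already do the geometric work unchanged. To complete your proof you would need either to carry out the induction on $n$ with the hyperplane-ideal lemma \eqref{eq:vSbWstcvlCtHlCrwld} as the paper does, or to give an independent argument that four explicit subspaces generate the canonical $d$-frame for every $d\geq 3$; the latter is not known to follow from anything already established in the paper.
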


At the time of writing, the  \emph{old} website  \href{http://www.acta.hu/}{http://www.acta.hu/} of Acta Sci.\ Math.\ (Szeged) provides free access to  Z\'adori's paper \cite{zadori2}, while  Gelfand and Ponomarev's proof seems to be less available. Thus, we recall Z\'adori's construction briefly and point out how it proves  Theorem \ref{thm:gelfand}.  

Given a prime field $K$, an expression like $\vspan{-x,x,0,0,-2y,z,x+y}$  stands for the subspace $\set{(-x,x,0,0,-2y,z,x+y)\in K^7: x,y,z\in K}$. The subscript ``vs'' (from "vector space") distinguishes this subspace from the projective point $[-x,x,0,0,-2y,z,x+y]$ in the projective space $\psp{6}(K)$. Letting $c:=1$ in his paper \cite{zadori2}, Z\'adori's five subspaces turn into the following four subspaces.

\begin{definition}[{Z\'adori's subspaces \cite[for $c=1$]{zadori2}}]
For $n=2k+1\geq 3$, let
\begin{align*}
t_1=\nuid t n_1&:=\vspan{0,\dots,0,x_{k+1},\dots,x_{2k+1}},\cr
t_2=\nuid t n_2&:=\vspan{x_{1},\dots,x_{k},0,\dots,0}, \cr
t_3=\nuid t n_3&:=\vspan{x_{1},\dots,x_{k},0,x_{1},\dots,x_{k}},\text{ and} \cr
t_4=\nuid t n_4=t_5=\nuid t n_5&:=\vspan{x_{1},\dots,x_{k},x_{1},\dots,x_{k},0}.
\end{align*}
Furthermore, for $n=2k\geq 4$, let
\begin{align*}
t_1=\nuid t n_1&:=\vspan{0,\dots,0,x_{k+1},\dots,x_{2k}},\cr
t_2=\nuid t n_2&:=\vspan{x_{1},\dots,x_{k},0,\dots,0},\cr
t_3=\nuid t n_3&:=\vspan{x_{1},\dots,x_{k},x_{1},\dots,x_{k}},,\text{ and}\cr
t_4=\nuid t n_4=t_5=\nuid t n_5&:=\vspan{0,x_{2},\dots,x_{k},x_{2},\dots,x_{k},0}.
\end{align*}
\end{definition}

\begin{proof}[Proof Theorem \ref{thm:gelfand}, which is the  particular $c=1$ case of  Z\'adori \cite{zadori2}] 
Let  $\nuid T n:=\{\nuid t n_1$, \dots, $\nuid t n_4\}$, and let $\latgen {\nuid T n}$ stand for the  sublattice of $L(K^n)$ generated by $\nuid T n$. It suffices to show that $\latgen {\nuid T n} = L(K^n)$. 
For $n=3$,  $\set{\nuid t n_1,\dots,\nuid t n_4}$ generates 
$L:=\Subp K{K^n} \cong \Sub{\psp 2(K)}$ by Lemma \ref{lemma:mGszRzbTr} and Figure \ref{figipplndmnd}. 
The same holds for all $3\leq n\in\Nplu$, because 
the induction step from $\set{n-2,n-1}$ to $n$ is the same as in Z\'adori \cite{zadori2}, provided that we keep $c=1$  and 
 let $t_8:=t_7$ and $t_{12}:=t_{11}$ there.  This is how  Z\'adori \cite{zadori2} proves Theorem \ref{thm:gelfand}. 
 
The proof is ready at this point. However, for  the reader's convenience
and also because we can benefit from Section \ref{sect:coord},  we give more details. Note that although \eqref{eq:vSbWstcvlCtHlCrwld} is not in  Z\'adori's paper, this  does not mean a significant difference from his argument.

In what follows, using the case $n=3$  as the base of induction, we present the induction step. Actually, we present two sorts of induction steps; one for $n$  even and one for $n$ odd. But first, we formulate and prove an auxiliary statement; see \eqref{eq:vSbWstcvlCtHlCrwld} a bit later. 
For $u\in L(K^n)$, $\idl u$ will denote the \emph{principal ideal} 
$\set{v\in L(K^n): v\leq u}$. We claim that if we consider the following hyperplane 
\begin{equation*}H_i:=\vspan{x_1,\dots,x_{i-1},0,x_{i+1},\dots,x_n}
\label{eq:mHrHcsBjrTknRr}
\end{equation*}
and $G$ is a subspace of $K^n$ such that $G\nsubseteq H_i$ and $\dim G\geq 2$, then 
\begin{equation}
\idl{G}\cup\idl{H_i}\text{ generates }  L(K^n).
\label{eq:vSbWstcvlCtHlCrwld}
\end{equation}
It suffices to prove  \eqref{eq:vSbWstcvlCtHlCrwld} only
in the case when $i=n$ and $\dim G=2$. In this case, after passing from $V$ to the projective space $\psp {n-1}$ over $K$, $G$ is a line of $\psp{n-1}$ and $H_{n}$ is a hyperplane with codimension $1$, that is, $H_{n}$ is a coatom in $\Sub{\psp{n-1}}$. We treat $H_n$ as the hyperplane at infinity. Let $S$ stand for the sublattice generated by $\idl G\cup\idl{H_n}$ in $\Sub{\psp{n-1}}$. Let $p\in \psp {n-1}$ be an arbitrary point, that is, $\set p$ (which we denote by $p$ according to the conventions of the paper) is an atom of $\Sub{\psp{n-1}}$. We are going to show that $p\in S$. We can assume that $p\notin \idl G\cup\idl{H_n}$ in $\Sub{\psp{n-1}}$, as otherwise $p\in S$ is obvious. In particular, $p\notin H_n$ (understood geometrically), that is, $p$ is a finite point.
We know (say, from  Gr\"atzer \cite[page 376]{ggfound}) that whenever a subspace contains two distinct points of a line, then it contains all points of the line in question. We also know that each line has at least three points.
Hence, it follows from $G\nsubseteq H_n$ and \eqref{eq:mtzTbnkStmlDsl} that $G\setminus H_n$ contains two distinct points, $q_1$ and $q_2$.  Since $p\notin G=\prline {q_1}{q_2}$, we have that $\prline{p}{q_1}\neq \prline{p}{q_2}$, and so $p=\prline{p}{q_1}\wedge \prline{p}{q_2}$ in $\Sub{\psp{n-1}}$.
For $i\in[2]$, let $r_i$ denote the point at infinity on the line $\prline {p}{q_i}$; $r_i$ exists since each line has at least one point at infinity, it is in the hyperplane $H_n$, and it is uniquely determined since the finite points $p,q_i$ on $\prline {p}{q_i}$  exclude that $\prline {p}{q_i}\subseteq H_n$. 
As $p$, $q_i$, and $r_i$ are three distinct points on the same line, we have that 
\begin{equation*}
p=\prline{p}{q_1}\wedge \prline{p}{q_2}=\prline{r_1}{q_1}\wedge \prline{r_2}{q_2}=(r_1\vee q_1)\wedge (r_2\vee q_2)\in S,
\end{equation*}
proving the validity of \eqref{eq:vSbWstcvlCtHlCrwld}.

Next, to perform the induction step from $n-1$ (and, for an odd $n$, also from $n-2$) to $n$,  first we deal with the case when  $4\leq n=2k$ is even. Then we define\footnote{There will be no $t_5$ in this paper and there will be other gaps in the set of subscripts later. This makes it easier to see that the subspaces defined here are exactly  the ``$c:=1$ cases of the subspaces'' given in Z\'adori \cite{zadori2}, but now we do not need all of his subspaces.}
\begin{align*}
\nuid t n_6:=(\nuid t n_1\vee \nuid t n_4)\wedge \nuid t n_2=\vspan{0,x_2,\dots,x_k,0,\dots,0} \text{ and}\cr
\nuid t n_7:=(\nuid t n_1\vee \nuid t n_4)\wedge \nuid t n_3=\vspan{0,x_2,\dots,x_k,0,x_2,\dots,x_k}. 
\end{align*}
Let $B:=\set{\nuid t n_1,\nuid t n_6,\nuid t n_7,\nuid t n_4}$; it is a subset of $\latgen{\nuid T n}$. Since $B$ is the image of $\nuid T{n-1}$ under the ``natural\footnote{We use quotation marks around ``natural'' to indicate that not in a category theoretic sense.} isomorphism'' $ K^{n-1} \to  \vspan{0,x_2,\dots,x_n}=H_1$, 
the induction hypothesis implies that $G:=\idl {H_1}\subseteq \latgen{\nuid T n}$. Since $\nuid T n$ is invariant under the automorphism $K^n\to K^n$ defined by $(x_1,\dots,x_n)\mapsto (x_n,\dots,x_1)$, $\idl {H_n}\subseteq \latgen{\nuid T n}$ also holds. Hence, \eqref{eq:vSbWstcvlCtHlCrwld} implies that $\latgen{\nuid T n}=L(K^n)$, as required.

Second, we assume that $n=2k+1\geq 5$. Then $\latgen{\nuid T n}$ contains
\allowdisplaybreaks{
\begin{align*}
&\nuid t n_6:=(\nuid t n_2\vee \nuid t n_3)\wedge \nuid t n_1=\vspan{0,\dots,0,x_{k+2},\dots,x_{2k+1}},\cr
&\nuid t n_7:=(\nuid t n_2\vee \nuid t n_3)\wedge \nuid t n_4=\vspan{0,x_2,\dots,x_k,0,x_2,\dots,x_k,0}\cr
&\nuid t n_9:=(\nuid t n_2\vee \nuid t n_4)\wedge \nuid t n_1=\vspan{0,\dots,0,x_{k+1}\dots,x_{2k},0}\cr
&\nuid t n_{10}:=(\nuid t n_2\vee \nuid t n_4)\wedge \nuid t n_3=\vspan{x_1,\dots,x_{k-1},0,0,x_1,\dots,x_{k-1},0}\cr
&\nuid t n_{11}:=(\nuid t n_9\vee \nuid t n_{10})\wedge \nuid t n_4=\vspan{x_1,\dots,x_{k-1},0,x_1,\dots,x_{k-1},0,0}\cr
&\nuid t n_{13}:=(\nuid t n_9\vee \nuid t n_{10})\wedge \nuid t n_2=\vspan{x_1,\dots,x_{k-1},0,\dots,0}\cr
\end{align*}}
Since $\set{\nuid t n_6, \nuid t n_2, \nuid t n_3, \nuid t n_7}$  corresponds to $\nuid T{n-1}$ under the ``natural isomorphism'' $K^{n-1}\to H_{k+1}$, 
the induction hypothesis gives that $\idl{H_{k+1}}\subseteq \latgen{\nuid T n}$. As $\set{\nuid t n_9, \nuid t n_{13},  \nuid t n_{10},  \nuid t n_{11}}$ corresponds to $\nuid T{n-2}$ under the ``natural isomorphism'' $K^{n-2}$ $\to$ $H_k\cap H_{n}:=G$, the induction hypothesis yields also that $\idl{G} \subseteq \latgen{\nuid T n}$. 
Since $G\nsubseteq H_{k+1}$ and $\dim(G)=n-2\geq 3\geq 2$,
we can use \eqref{eq:vSbWstcvlCtHlCrwld} (with $i:=k+1$) to conclude that  $\latgen{\nuid T n}=L(K^n)$, completing the induction step and the proof of Theorem \ref{thm:gelfand}.
\end{proof}

\section{Appendix: the Maple program mentioned in Footnote \ref{foOt:mapLe}}\label{sect:maple}

This section presents two Maple programs. 

\subsection*{The following short program computed the data Table \ref{table:szbklngy}}

{\small
\begin{verbatim}
> restart; #with(combinat):
> gbc:=proc(q,m,r) local i,j,k,sz,nev,thisisit;
>   sz:=1; nev:=1;
>  for i from m-r+1 to m do  sz:=sz*(1-q^i) od;
>  for i from 1 to r do nev:=nev*(1-q^i) od;
>  thisisit:=round(evalf(sz/nev));
> end:
> for q from 2 to 19 do
>  if member(q, {2,3,4,5,7,8,9,11,13,16,17,19})
>  then d:=80: ehat:=gbc(q,d,floor(d/2)): print(" "): 
>   print(cat("d=",d,", q=",q," d chooses d/2 w.r.t. q=",
>    ehat,", and its log[10]=", evalf(log[10](ehat)) )):
>  fi:
> od:
\end{verbatim}
}

\subsection*{
We continue with the Maple program mentioned in Footnote \ref{foOt:mapLe}}

{\small
\begin{verbatim}
> restart; #Computation in the Fano plane
> # The program contains some parts, called "tests". Running
> # these parts can increase your trust in the program.
> # To run these parts, delete the hash marks (#) from them.
> 
> #   PART 1: ENTERING THE DESCRIPTION OF THE FANO PLANE
> 
> pnam:=array(1..7): #The names of the points in the paper
>  pnam[1]:="a1": pnam[2]:="a2": pnam[3]:="a3": 
>  pnam[4]:="b1": pnam[5]:="b2": pnam[6]:="b3": 
>  pnam[7]:="c":
> lnam:=array(8..14): #Lines names in the paper;
>  lnam[8]:="u1": lnam[9]:="u2": 
>  lnam[10]:="u3": lnam[11]:="v1":
>  lnam[12]:="v2": lnam[13]:="v3": lnam[14]:="w":  
> line:=array(8..14): #The lines in the paper
>  line[7+1]:={2,3,3+1}: line[7+2]:={1,3,3+2}:
>  line[7+3]:={1,2,3+3}: line[7+3+1]:={1,3+1,7}:
> line[7+3+2]:={2,3+2,7}:line[7+3+3]:={3,3+3,7}: 
>  line[7+7]:={3+1,3+2,3+3}: #Each line is a set of points;
>  #the program treats the points numbers while computing
>  #but uses their names, stored in pnam, when printing.
> L:=array(0..15): #The subspace lattice of the Fano plane
> for i from 1 to 7 do L[i]:={i} od:# 
> for i from 8 to 14 do L[i]:=line[i] od: L[0]:={}: L[15]:={}:
> for i from 1 to 7 do L[15]:=L[15] union {i} od:#
> lnotat:=array(0..15): 
>  #The notations of the subspaces in the paper
>  #like "0", "a1", "u2", or "1".
>  lnotat[0]:="0":lnotat[15]:="1":
>  for i from 1 to 7 do lnotat[i]:=pnam[i] od:
>  for i from 8 to 14 do lnotat[i]:=lnam[i] od:
> leq:=proc(x,y) local r; #Describing the order
>   if x=x intersect y then r:=1 else r:=0 fi
> end:#
> SetToName:=proc(x) local i,r; #Name: what the paper uses
>  #E.g., SetToName=({2,4,3}) = "u1"
>  r:="Non-recognizable":
>  for i from 0 to 15 do if x=L[i] then r:=lnotat[i] fi
>  od:  r:=r:     
> end: #End of SetToName
> SetToStr:=proc(x) 
>  #E.g., SetToStr({2,3,4})="{a2,a3,b1}"  
>  local i,r,needscomma;
>  r:="{": needscomma:=0:
>  for i from 1 to 7 do
>   if leq(L[i],x)=1 then 
>     if needscomma=1 then r:=cat(r,",",lnotat[i])
>     else r:=cat(r,lnotat[i]): needscomma:=1
>     fi:
>   fi:
>  od: #end of "for i" loop
>  r:=cat(r,"}"):
> end: #End of procedure SetToStr 
> 
> #           PART 2: LISTING THE DETAILS OF THE FANO PLANE
> 
> lstr:=array(0..15):#The subspaces in string forms
>  # like  "u1={a2,a2,b1}", "a1={a1}",  or "0={}"
>  for i from 0 to 15 do 
>   lstr[i]:=cat(lnotat[i],"=",SetToStr(L[i])) 
>  od:  #end of the "for i" loop
print("The details of the subspace lattice L"):
print("  of the Fano plane are as follows:"):
for i from 0 to 15 do print(cat(lstr[i],
                  " (stored in L(",i,")")) od:
               "The details of the subspace lattice L"
                "  of the Fano plane are as follows:"
                        "0={} (stored in L(0)"
                      "a1={a1} (stored in L(1)"
                      "a2={a2} (stored in L(2)"
                      "a3={a3} (stored in L(3)"
                      "b1={b1} (stored in L(4)"
                      "b2={b2} (stored in L(5)"
                      "b3={b3} (stored in L(6)"
                       "c={c} (stored in L(7)"
                   "u1={a2,a3,b1} (stored in L(8)"
                   "u2={a1,a3,b2} (stored in L(9)"
                   "u3={a1,a2,b3} (stored in L(10)"
                   "v1={a1,b1,c} (stored in L(11)"
                   "v2={a2,b2,c} (stored in L(12)"
                   "v3={a3,b3,c} (stored in L(13)"
                   "w={b1,b2,b3} (stored in L(14)"
              "1={a1,a2,a3,b1,b2,b3,c} (stored in L(15)"
> #
> #               PART 3: COMPUTING THE JOIN IN L
> #   
> which:=proc(x) local i,r; # x is subspace
>  r:=-1; for i from 0 to 15 do if x=L[i] then r:=i fi od;
>  # if r=-1 then print(" !!! -1 means: NOT IN L !!!"): fi:
>  r:=r;
> end: #And now a few tests with "which":
> #The built-in operation "intersect" is good for meet.
> join:=proc(x,y) local z,i,r:
>  z:=L[15]: #The top element
>  for i from 0 to 14 do
>   if (leq(x,L[i])=1) and (leq(y,L[i])=1) 
>      then z:=z intersect L[i]
>   fi
>  od: #End of the "for i" loop
>  r:=z:
> end: #End of procedure join 
> 
> # Test:  in the next two lines, we test some joins in L:
> #a:={1}:b:={2,4,3}: c:=join(a,b); print(cat
> #(SetToName(a)," join ",SetToName(b),"=",SetToName(c))):
> 
> 
> #       PART 4: SEARCH IN S
> 
> S:=array(1..257,1..2):#The sublattice to be generated   
> Ssize:=0: #At present, S is the emptyset
> whereInS:=proc(x,y) local r,i: 
>                        #Finds an element of L^2 in S
>  r:=-1:
>  for i from 1 to Ssize do
>   if (x=S[i,1]) and (y=S[i,2]) then r:=i
>   fi:
>  od: r:=r:
> end: #End of procedure whereInS; it will be tested later.
> 
> #       PART 5: COMPUTING WHAT S GENERATES
> 
> generating:=proc()  local i,j,z1,z2,m1,m2,found,oldSize; 
>     global S,Ssize;  
>  #Computes what (S[1,1],S[1,2]), ... , 
>  # (S[Ssize,1],S[Size,2]) generates,  puts it into S,
>  # and increases Ssize  
>  found:=true:
>  while found=true
>  do found:=false: oldSize:=Ssize:
>   for i from 1 to oldSize-1
>   do for j from i+1 to oldSize
>    do z1:=join(S[i,1],S[j,1]): z2:=join(S[i,2],S[j,2]):
>       m1:=S[i,1] intersect S[j,1]: 
>       m2:=S[i,2] intersect S[j,2]: 
>       if whereInS(z1,z2)=-1 then
>        found:=true: Ssize:=Ssize+1: 
>        S[Ssize,1]:=z1: S[Ssize,2]:=z2:
>       fi: # New join added 
>       if whereInS(m1,m2)=-1 then
>        found:=true: Ssize:=Ssize+1: 
>        S[Ssize,1]:=m1: S[Ssize,2]:=m2:
>       fi: # New meet added 
>    od: # for j
>   od: # for i 
>  od: #while found; now S is the sublattice generated.
> end: #End of procedure generating;
> #it will be tested later, after initialization
> 
> #      PART 6: CONVERTING A ROW OF S TO TEXT
> 
> Sname:=proc(i) local i1,i2,r:#E.g, Sname(1)="(a1,a1)"
>  i1:=which(S[i,1]); i2:=which(S[i,2]);  
>  if (i1=-1) or (i2=-1)
>  then print("Something is wrong here"): r:=""
>  else  r:=cat("(",lnotat[i1],",",lnotat[i2],")")
>  fi: r:=r:  
> end: #End of proceture Sname, to be tested later. 
> #
> #Test:    FIRST TEST (OPTIONAL)
> 
> #Testing what 3 points on a line and a further point
> #generate; and testing Sname and whereInS, too.   
> #for i from 1 to 4 do S[i,1]:=L[i]: S[i,2]:=L[i]:
> #od:  Ssize:=4: print(cat("The subset of L^2:")): 
> #for i from 1 to Ssize do print(Sname(i)) od:  
> #generating(): 
> #print(cat("generates the following ",
> #            Ssize,"-element sublattice:"));   
> #for i from 1 to Ssize do print(Sname(i)) od:
> #print("(A whereInS-test:"): 
> #print(cat(Sname(L[8]),"=",L[8]," it is the ",
> # whereInS(L[8],L[8]),"-th" )): 
> 
> #Test:    SECOND TEST (OPTIONAL)
> #Testing what 4 points in general position  generate    
> #for i from 1 to 3 do S[i,1]:=L[i]: S[i,2]:=L[i]:
> #od:  S[4,1]:=L[7]: S[4,2]:=L[7]: Ssize:=4: generating(): 
> #print(cat("The following ",Ssize,
> #                   "-element sublattice is generated",
> #      " by its first four elements:"));   
> #for i from 1 to Ssize do print(Sname(i)) od: 
> 
> #      PART 7: THE MAIN COMPUTATION
> #    
> for i from 1 to 3 do S[i,1]:=L[i]: S[i,2]:=L[i]:
> od:  S[4,1]:=L[7]: S[4,2]:=L[14]: Ssize:=4: 
> print("The following 4 elements of L^2:"):
> txt:=Sname(1):for i from 2 to Ssize do 
>             txt:=cat(txt,", ", Sname(i)) od: print(txt):  
> generating():print("generate a ",
> Ssize,"-element sublattice,"):   
> print("which consists of the following elements:"):  
for i from 1 by 5 to Ssize do  txt:="":  
 for j from 0 to 4 do
  if i+j<Ssize  then txt:=cat(txt,Sname(i+j),", "):
  fi:
  if i+j=Ssize then txt:=cat(txt,Sname(i+j),"."):  
  fi
 od: print(txt): 
od:
a1:=lnotat[15]: a2:=lnotat[0]:
print(cat("The position of (", a1, ",", a2, 
                  ") is ",whereInS(L[15],L[0]))):
print("(-1  means that not found)"):   
                  "The following 4 elements of L^2:"
                  "(a1,a1), (a2,a2), (a3,a3), (c,w)"
              "generate a ", 50, "-element sublattice,"
            "which consists of the following elements:"
            "(a1,a1), (a2,a2), (a3,a3), (c,w), (u3,u3), "
             "(0,0), (u2,u2), (v1,1), (u1,u1), (v2,1), "
              "(v3,1), (1,1), (0,a1), (0,a2), (0,a3), "
             "(0,b3), (0,b2), (0,b1), (a1,u3), (a2,u3), "
           "(b3,u3), (a1,u2), (b2,u2), (a3,u2), (b1,u1), "
             "(c,1), (a2,u1), (a3,u1), (a1,v1), (u3,1), "
             "(u2,1), (a2,v2), (u1,1), (a3,v3), (0,u3), "
              "(0,u2), (0,u1), (0,v1), (b1,1), (a2,1), "
              "(a3,1), (0,v2), (a1,1), (b2,1), (0,v3), "
                "(b3,1), (0,w), (w,1), (0,1), (0,c)."
                    "The position of (1,0) is -1"
                     "(-1  means that not found)"

\end{verbatim}
}
This program (called ``worksheet'' in Maple) is also available from the \href{http://www.math.u-szeged.hu/~czedli}{author's website}.

\end{document}